\numberwithin{equation}{section}
\theoremstyle{plain}
\newtheorem{theorem}{Theorem}%[section]
\newtheorem{definition}{Definition}
\newtheorem*{definition*}{Definition}
\newtheorem{corollary}{Corollary}
\newtheorem{lemma}{Lemma}
\newtheorem{proposition}{Proposition}
\newtheorem*{assumption}{Assumption}
\newtheorem{remark}{Remark}
\theoremstyle{definition}
\newtheorem*{example}{Example}
\def\X{{\cal X}}
\def\A{{\cal A}}
\def\RR{\mathbb{R}}
\def\prodin{\prod_{i=1}^n}
\mathchardef\given="626A
\def\ra{\rightarrow}
\renewcommand{\P}{\mathbb{P}}
\newcommand{\F}{\mathbb{F}}
\renewcommand{\H}{\mathbb{H}}
\newcommand{\E}{\mathbb{E}}
\newcommand{\R}{\mathbb{R}}
\newcommand{\G}{\mathbb{G}}
\newcommand{\mA}{\mathcal{A}}
\newcommand{\mH}{\mathcal{H}}
\newcommand{\mF}{\mathcal{F}}
\newcommand{\mZ}{\mathcal{Z}}
\newcommand{\mC}{\mathcal{C}}
\newcommand{\mW}{\mathcal{W}}
\renewcommand{\L}{\mathcal{L}}
\newcommand{\mG}{\mathcal{G}}
\newcommand{\eps}{\varepsilon}
\newenvironment{enumerate*}%
\begin{document}

\begin{frontmatter}
\title{Semiparametric Bayesian causal inference}% using Gaussian process priors}%\title{A Sample Document\thanksref{T1}}
\runtitle{Bayesian causal inference}
%\thankstext{T1}{Footnote to the title with the ``thankstext'' command.}

\begin{aug}
\author{\fnms{Kolyan} \snm{Ray}\thanksref{t1}\ead[label=e1]{kolyan.ray@kcl.ac.uk}}
\and
\author{\fnms{Aad} \snm{van der Vaart}\thanksref{t1}\ead[label=e2]{avdvaart@math.leidenuniv.nl}}

\thankstext{t1}{The research leading to these results has received funding from the European Research Council under ERC Grant Agreement 320637.}
%\thankstext{t2}{First supporter of the project}
\runauthor{K. Ray and A.W. van der Vaart}

\affiliation{King's College London and Universiteit Leiden}

\address{Department of Mathematics\\
King's College London\\
Strand\\
London WC2R 2LS\\
United Kingdom\\
\printead{e1}}
%\phantom{E-mail:\ }\printead*{e2}}

\address{Mathematical Institute\\
Leiden University\\ 
P.O. Box 9512\\
2300 RA Leiden\\
Netherlands\\
\printead{e2}}
%\printead{u1}}

\end{aug}

\begin{abstract}
We develop a semiparametric Bayesian approach for estimating the mean response in a missing data
model with binary outcomes and a nonparametrically modelled propensity score. 
Equivalently, we estimate the causal effect of a treatment, correcting nonparametrically for confounding. 
We show that standard Gaussian process priors satisfy a semiparametric Bernstein--von Mises theorem under
smoothness conditions. We further propose a novel propensity score-dependent prior that provides
efficient inference under strictly weaker conditions. We also show that it is theoretically
preferable to model the covariate distribution with a Dirichlet process or Bayesian bootstrap,
rather than modelling its density.
\end{abstract}

\begin{keyword}[class=MSC]
\kwd[Primary ]{62G20}
\kwd[; secondary ]{62G15}
\kwd{62G08}
\end{keyword}

\begin{keyword}
\kwd{Bernstein--von Mises}
\kwd{Gaussian processes}
\kwd{propensity score-dependent priors}
\kwd{causal inference}
\kwd{Dirichlet process}
\end{keyword}

\end{frontmatter}

\section{Introduction}
In many applications, one wishes to make inference concerning the causal effect of a treatment or condition. 
Examples include healthcare and assessing the impact
of public policies amongst many others. The available data are often observational rather
than the result of a carefully planned experiment or trial. The notion of ``causal'' then needs
to be carefully defined and the statistical analysis must take into account other possible
explanations for the observed outcomes.

A common framework for causal inference is the potential outcome setup \cite{imbens2015,robins1986}. In this
framework, every individual possesses two ``potential outcomes'', corresponding to the individual's
outcomes with and without treatment. The treatment effect, which we wish to estimate, is thus the
difference between these two potential outcomes. Since we only observe one out of each pair of
outcomes, and not the corresponding ``counterfactual'' outcome, we do not directly observe samples of
the treatment effect. Because in practice, particularly in observational studies, individuals are assigned
treatments in a biased manner, a simple comparison of actual cases (i.e.\ treated individuals) and controls may be misleading 
due to selection bias. A typical way to overcome this is to gather the values of covariate variables
that influence both outcome and treatment assignment (``confounders'') and apply a correction 
based on the ``propensity score'', which is the conditional probability that a subject is treated
as a function of the covariate values. Under the assumption that outcome and treatment assignment
are independent given the covariates, the causal effect of treatment can be identified from the
data. Popular estimation methods include ``propensity score matching'' 
\cite{rubin1978,rosenbaum1983} and ``double robust methods'' \cite{robins1986,robins1995,rotnitzky1995,seaman2018}.
In this paper we follow the approach of nonparametrically modelling the propensity score function 
and posing the estimation of the treatment effect as a problem of estimation of a functional on 
a semiparametric model \cite{BKRW,vandervaart1991,vandervaart1998}. Our methodological novelty is to follow a 
semiparametric Bayesian approach, putting nonparametric priors on the propensity score
and/or on the unknown response function and the covariate distribution, possibly incorporating
an initial estimator of the first function. 

For notational simplicity we in fact consider the missing data model which is mathematically
equivalent to observing one arm of the causal setup. The model is also standard and widely-studied
on its own in biostatistical applications, where response variables are frequently missing, and is a
template for a number of other models \cite{robins2008,vandervaart2014}. 
For a recent review on estimating an average treatment effect over a (sub)population, a problem that has received
considerable attention in the econometrics, statistics and epidemiology literatures, see
Athey et al. \cite{athey2017}.

Suppose that we observe $n$ i.i.d.\ copies $X_1,\dots,X_n$ of a random variable $X = (Z,R,RY)$,
where $R$ and $Y$ take values in the two-point set $\{0,1\}$ and are conditionally independent given
$Z$. We think of $Y$ as the outcome of a treatment and are interested in estimating its expected
value $\E Y$.  The problem is that the outcome $Y$ is observed only if the indicator variable $R$
takes value 1, as otherwise the third component of $X$ is equal to 0.  Whether the outcome is
observed or not may well be dependent on its value, which precludes taking a simple average of the
observed outcomes as an estimator for $\E Y$. The covariate $Z$ is collected to correct for this
problem; it is assumed to contain exactly the information that explains why the response $Y$ is not
observed except for purely random causes, so that the outcome $Y$ and missingness indicator $R$ are
conditionally independent given $Z$, i.e.\ the outcomes are \textit{missing at random} (relative to $Z$).  

The connection to causal inference is that we may think of $Y$ as a ``counterfactual" outcome
if a treatment were assigned ($R=1$) and its mean as ``half'' the treatment effect under the
assumption of unconfoundedness. More precisely, if $Y^1$ and $Y^0$ denote the potential
outcomes when treated or not treated, then in the causal model one would observe
$(Z, R, Y^1R, Y^0(1-R))$ and be interested in estimating $\E Y^1-\E Y^0$ under the assumption
that $Y^0,Y^1$ are conditionally independent of $R$ given $Z$. One can think of the
missing data problem as simplifying this to observing $(Z, R, Y^1R)$ and estimating $\E Y^1$.
To estimate the causal effect one could apply the missing data problem a second time,
to the data $(Z, R,  Y^0(1-R))$, to estimate $\E Y^0$, or do a simultaneous analysis. In the
nonparametric setup there will be no essential difference between the two.

The model for a single observation $X$ can be described by the distribution of $Z$ and the two
conditional distributions of $Y$ and $R$ given $Z$. In this paper we model these three components
nonparametrically. We investigate a Bayesian approach, putting a nonparametric prior on the three
components, in particular Gaussian process and Dirichlet process priors. We then consider the mean
response $\E Y$ as a functional of the three components and study the induced marginal posterior
distribution of $\E Y$ from a frequentist perspective. The aim is to derive conditions under
which this marginal posterior distribution satisfies a Bernstein--von Mises theorem in the
semiparametric sense, thus yielding recovery of the mean response at a $\sqrt n$-rate and 
asymptotic efficiency in the semiparametric sense. 

In recent years Bayesian approaches have become increasingly popular due to their excellent
empirical performance for such problems \cite{hill2011,heckman2014,taddy2014,zigler2014,hahn2016,hahn2017,alaa2017b,futoma2017,alaa2018}. However, despite their increasing use in practice, there have been few corresponding theoretical results. Indeed, early
work on semiparametric Bayesian approaches to this specific missing data problem produced negative
results, proving that many common classes of priors, or more generally likelihood-based procedures,
produce inconsistent estimates assuming no smoothness on the underlying parameters, see the results
and discussion in \cite{robins1997,ritov2014}. We attempt to shed light on this apparent 
gap between the excellent empirical performance observed in practice and the potentially
disastrous theoretical performance.

The structured nature of the model, with three parameters (response function, propensity score and covariate distribution),
 requires careful consideration of prior distributions. As the likelihood factorizes over the three parameters, 
choosing these a priori independent will lead to a product posterior.  
We show that this can lead to efficient estimation of $\E Y$, but only under unnecessarily harsh 
smoothness requirements on the parameters.  This is in agreement with the discussion in \cite{robins1997,ritov2014}, 
which applies to likelihood-based methods in general, including semiparametric maximum likelihood \cite{murphy}.  
Within our Bayesian setup it is possible to correct this (partly) by modelling the response function and 
propensity score as a priori dependent, thus allowing the components to share information, despite
the factorisation in the likelihood.
In particular, we propose a novel Gaussian process prior that incorporates an estimate of the propensity score function,
 and show that it performs efficiently under strictly weaker conditions than for standard product priors (see \cite{RaySzabo} for an empirical investigation). 
Unlike for these latter priors, extra regularity of the binary regression function can compensate for low regularity 
of the propensity score, that is one direction of so-called ``double robustness" \cite{rotnitzky1995,RobinsRotnitzky2001}. 
A related construction using Bayesian additive regression trees (BART) has been shown to work well empirically \cite{hahn2017}.  It can thus be both practically and theoretically advantageous to employ propensity score-dependent priors.

For the estimation of $\E Y$ at $\sqrt n$-rate, smoothness of the distribution of the covariate $Z$ is not
needed. In our main result, we therefore model this distribution by the standard nonparametric prior for a distribution: 
the Dirichlet process. In our concrete examples the prior modelling thus consists of a combination of Gaussian and Dirichlet processes.
 In the supplementary material we also consider modelling the covariate
density, for instance by an exponentiated Gaussian process. Our result seems to indicate that 
even when the smoothness of the density is modelled correctly, this approach 
can induce a non-vanishing bias in the posterior distribution of $\E Y$, 
an effect that becomes more pronounced with increasing covariate dimension.

The papers \cite{robins2008,robins2017} consider estimation of $\E Y$ under minimal
smoothness conditions on the parameters. Using estimating equations, the authors construct 
estimators that attain an optimal rate of convergence slower than $\sqrt n$ in cases where the
component parameters have low smoothness. Furthermore, they construct estimators
that attain a $\sqrt n$-rate under minimal smoothness conditions, less stringent than in earlier
literature, using higher order estimating equations. It is unclear whether similar results can be
obtained using a Bayesian approach. The constructions in the present paper can be
compared to the estimators obtainable for linear (or first order) estimating equations. It remains to
be seen whether Bayesian modelling is capable of performing the bias corrections necessary
to handle true parameters of low smoothness levels in a similar manner as higher
order estimating equations.

For smooth parametric models, the theoretical justification for posterior based inference is
provided by the Bernstein--von Mises theorem or property (hereafter BvM). This property says that as
the number of observations increases, the posterior distribution is approximately a Gaussian
distribution centered at an efficient estimator of the true parameter and with covariance equal to
the inverse Fisher information, see Chapter 10 of \cite{vandervaart1998}. While such a result does
not hold in full generality in infinite dimensions \cite{freedman1999}, semiparametric analogues can
establish the BvM property for the marginal posterior of a finite-dimensional parameter in the
presence of an infinite-dimensional nuisance parameter
\cite{castillo2012,rivoirard2012,bickel2012,castillo2015}. In such cases, care is required
in the choice of prior assigned to the nonparametric part, as oversmoothing may
induce a bias in the posterior distribution of the finite-dimensional parameter.

Our main results are two theorems for general priors on the response function and/or propensity score, 
followed by corollaries for Gaussian process priors. In both cases we combine these with a Dirichlet process prior
on the covariate distribution. While the first theorem is in the spirit of earlier work, it
is novel in its extension to a structured semiparametric model and its combination with the Dirichlet process, in both a modelling and a technical sense. 
The second theorem is innovative in its investigation of ``half of double-robustness'', as indicated in the
preceding, and by showing that incorporating a prior perturbation in the least favourable direction 
can remove potential bias from the posterior. The latter device takes care of the usual ``prior invariance condition''
and has consequences beyond the model in this paper.
 The corollaries for Gaussian process priors illustrate the conditions
of the main results, and give concrete examples of inference. In the supplementary material we
present a third theorem, which covers the case that the covariate density, rather than the distribution, is modelled,
which is again illustrated by Gaussian process priors.

An important consequence of the semiparametric BvM is that credible
sets for the functional are asymptotically confidence regions with the
same coverage level. The Bayesian approach thus automatically provides
access to uncertainty quantification once one can sample from the
posterior distribution. Obtaining confidence statements for average
treatment effects is a current area of research and there has been
recent progress in this direction, for example using random forests
and regression trees \cite{athey2016,wager2017}. Our results
show that Bayesian approaches can also yield valid frequentist
uncertainty quantification in this setting.

The paper is structured as follows. In Section~\ref{sec:model}, we provide a review of the model,
including the relevant semiparametric theory. Section~\ref{sec:results} contains the two main theorems
and their corollaries, with discussion in Section~\ref{sec:discussion} 
and the main proofs in Section~\ref{sec:proofs}. The remaining sections
are given in the supplement of the paper. Section~\ref{sec:general_prior_f} gives the third theorem, with
a joint prior on the propensity score, response function and covariate density. 
Technical results, auxiliary results and posterior contraction results are deferred to
Sections~\ref{SectionTechnicalResults}, \ref{sec:auxiliary_results} and~\ref{sec:contraction}, respectively. 

\subsection{Notation}
The notation $\lesssim$ denotes inequality up to a multiplicative constant that is fixed throughout and
$\lfloor x\rfloor$ is the largest integer strictly smaller than $x$. The symbol $\Psi$ is used
for the logistic function given by $\Psi(x)=1/(1+e^{-x})$. We abbreviate $\int f\, dP$ by
$Pf$. For probability densities $f$ and $g$ with respect to some dominating measure $\nu$,
$h(f,g) = (\int (f^{1/2}-g^{1/2})^2 d\nu)^{1/2}$ is the Hellinger distance, $K(f,g)$ is the
Kullback-Leibler divergence and $V(f,g) = \int (\log (f/g))^2\, dF$. We denote by $H^s=H^s([0,1]^d)$
and $C^s=C^s([0,1]^d)$ the $L^2$-Sobolev and H\"older spaces, respectively. For i.i.d. random variables
$X_1,\dots,X_n$ with common law $P$ the notations $\P_n [h]= n^{-1} \sum_{i=1}^n h(X_i)$ and
$\G_n[h]=\sqrt n(\P_n- Ph)$ are the empirical measure and process, respectively.  The notation
$\L(Z)$ denotes the law of a random element $Z$.  We often drop the index $n$ in the product
measure $P_\eta^n$, writing $P_\eta$, and write $P_0$ instead of $P_{\eta_0}$, where $\eta_0$
is the true parameter for the data generating distribution. The $\varepsilon$-covering number of a
set $\Theta$ for a semimetric $d$, denoted $N(\Theta,d,\varepsilon)$, is the minimal number of
$d$-balls of radius $\varepsilon$ needed to cover $\Theta$, and $N_{[]}(\Theta,d,\varepsilon)$ is
the minimal number of brackets of size $\varepsilon$ needed to cover a set of functions $\Theta$.

\section{Model details}\label{sec:model}
Recall that we observe i.i.d.\ copies $X_1,\dots,X_n$ of a random variable $X = (Z,R,RY)$, where $R$
and $Y$ take values in the two-point set $\{0,1\}$ and are conditionally independent given $Z$,
which itself takes values in a given measurable space $\mZ$.
Denote the full sample by $X^{(n)} = (X_1,\dots,X_n)$. This
model can be parameterized via the marginal distribution $F$ of $Z$ and the
conditional probabilities $a(z)^{-1}=P(R=1|Z=z)$, called the \textit{propensity score}, and
$b(z) = P(Y=1|Z=z)$, the regression of $Y$ on $Z$.
The distribution of an observation $X$ is thus fully described by the triple $(a,b,F)$. 
If $F$ has a density $f$, then we may also use the triple $(a,b,f)$.

For prior construction it will be useful to transform the parameters by a link function. Most smooth
maps from $\RR$ to $(0,1)$ may be used, but for definiteness we choose the logistic function $\Psi(t) = 1/(1+e^{-t})$, 
and consider the reparametrization
\begin{align}\label{eq:paramtrization}
\eta^a = \Psi^{-1}(1/a),\quad \quad  \eta^b = \Psi^{-1}(b),
\end{align}
and write $\eta = (\eta^a,\eta^b)$.
If a density $f$ of $Z$ exists, then we define in addition
$$\eta^f = \log f,$$
and write by a slight abuse of notation $\eta=(\eta^a,\eta^b,\eta^f)$. 
%The parametrization \eqref{eq:paramtrization} is used for our prior construction; we use the two parametrizations $(a,b,f)$ and $\eta$ interchangeably. 

The density $p_{(a,b,f)} = p_\eta$ of $X$ can now be given as
\begin{align}\label{eq:likelihood_full}
p_\eta (x) 
=  \Bigl(\frac{1}{a(z)}\Bigr)^{r} \Bigl(1-\frac{1}{a(z)}\Bigr)^{1-r} b(z)^{ry} (1-b(z))^{r(1-y)}\, f(z) .
\end{align}
Note that this factorizes over the parameters. If the covariate is not assumed to  have
a density and $\eta=(\eta^a,\eta^b)$, we use the same notation $p_\eta$, but then
the factor $f(z)$ is understood to be 1, and the expression is the conditional density of $(R, RY)$ given $Z=z$.
Since $p_\eta$ factorizes over the three (or two) parameters, the log-likelihood based on $X^{(n)}$ separates as
\begin{equation}
\label{EqLogLikelihood}
\ell_n(\eta) = \sum_{i=1}^n\log p_{(a,b,f)}(X_i)=\ell_n^a (\eta^a) + \ell_n^b (\eta^b) + \ell_n^f (\eta^f),
\end{equation}
where each term is the logarithm of the factors involving only $a$ or $b$ or $f$,
and $\ell_n^f (\eta^f)$ is understood to be absent when existence of a density $f$ is not assumed.
The functional of interest is the \textit{mean response} $\E_\eta Y = \E_\eta b(Z)$, which 
can be expressed in the parameters as
\begin{equation*}
\chi (\eta) = \int b\, dF= \int \Psi(\eta^b)(z)\, e^{\eta^f(z)}\, dz,
\end{equation*}
where the second representation is available if $F$ has a density.

Estimators that are $\sqrt{n}$-consistent and asymptotically efficient for $\chi(\eta)$ have been
constructed using various methods, but only if $a$ or $b$ (or both) are sufficiently smooth. In the
present context, under the assumption that $a\in C^\alpha$ and $b\in C^\beta$, Robins et al.\
\cite{robins2017} have constructed estimators that are $\sqrt{n}$-consistent if
$(\alpha+\beta)/2 \geq d/4$, where $d$ is the dimension of the covariates. They have also shown that the latter condition is sharp: 
the minimax rate becomes slower than
$1/\sqrt{n}$ when $(\alpha+\beta)/2 < d/4$ (see \cite{robins2009}). The estimators in
\cite{robins2017} employ higher order estimating equations to obtain better control of the
bias. First-order estimators, based on linear estimators or semiparametric maximum likelihood,
have been shown to be $\sqrt n$-consistent only under the stronger condition
\begin{align}\label{eq:double_robust_smoothness}
\frac{\alpha}{2\alpha+d} + \frac{\beta}{2\beta+d} \geq \frac{1}{2},
\end{align}
see e.g. \cite{robins1995,rotnitzky1995}. 
%This arises from the ``first order bias", which takes the form $\int (\hat{a} - a)(\hat{b}-b) dF/a$ for estimators $\hat{a}$ of $a$ and $\hat{b}$ of $b$. 
In both cases the conditions show a trade-off between the smoothness levels of $a$ or $b$: higher
$\alpha$ permits lower $\beta$ and vice-versa.  This trade-off results from the multiplicative form
of the bias of linear or higher-order estimators.  So-called \emph{double robust} estimators are able to
exploit this structure, and work well if either $a$ or $b$ is sufficiently smooth.  (More generally,
it suffices that the parameters $a$ and $b$ can be estimated well enough, where the combined rates
are relevant.  The inequalities even remain valid with $\alpha=0$ or $\beta=0$ interpreted as the
existence of $\sqrt n$-consistent estimators of $a$ or $b$, as would be the case given a correctly
specified finite-dimensional model.)  We shall henceforth also assume that the parameters $a$ and
$b$ are contained in H\"older spaces $C^\alpha$ and $C^\beta$, respectively. See \cite{seaman2018}
for a recent discussion of double robustness.

For estimation of $\E Y$ at $\sqrt n$-rate the covariate density $f$ need not be smooth, which makes
sense intuitively, as the functional can be written as an integral relative to the corresponding
distribution $F$. (Counter to this intuition \cite{robins2009,robins2017} show this to be false for
optimal estimation at slower than $\sqrt n$-rate.) This may motivate modelling $F$
nonparametrically, in the Bayesian setting for instance with a Dirichlet process prior.

All these observations are valid only if the estimation problem is not affected by the parameters
$a$, $b$ or $f$ taking values on the boundary of their natural ranges. For simplicity we make the following assumption throughout.

\begin{assumption}
The true functions $1/a_0$ and $b_0$ are bounded away from 0 and 1 and $f_0$ is bounded away from 0 and $\infty$. 
\end{assumption}

\subsection{Semiparametric information and least favourable direction}
We finish by reviewing the tangent space and information distance of the model, which is well
known to play an important role in semiparametric estimation theory
\cite{BHHW,BKRW,vandervaart1991}, and enters the Bayesian derivations through the ``least favourable submodel''.  
(See \cite{castillo2012} or Chapter~12 of \cite{vandervaartbook2017} for general reviews in the context of Bayesian estimation.)

With regards to the parame\-tri\-zation \eqref{eq:paramtrization}, consider the one-dimensional
submodels $t\mapsto \eta_t$ induced by the paths 
$$\frac1{a_t}= \Psi(\eta^a + t\mathfrak{a}),\qquad 
b_t = \Psi (\eta^b + t\mathfrak{b}),\qquad 
dF_t = dF\,  e^{t\mathfrak{f}} \bigl({\textstyle\int} e^{t\mathfrak{f}}\,dF\bigr)^{-1}$$ 
for given directions $(\mathfrak{a},\mathfrak{b},\mathfrak{f})$ with $\int \mathfrak{f}\,dF = 0$, 
and given ``starting'' point $\eta=\eta_0$.
Inserting these paths in the likelihood \eqref{eq:likelihood_full},
and computing the derivative ${\tfrac{d}{dt}}_{|t=0} \log p_{\eta_t}(x)$ of the log likelihood, 
we obtain the ``score function'' at $\eta=\eta_0$ in the direction $(\mathfrak{a},\mathfrak{b},\mathfrak{f})$.
This can be easily computed to be the sum of the score functions when varying the three parameters
separately, which are given by 
\begin{equation*}
\begin{split}
& B_\eta^a \mathfrak{a}(X) = (R-\tfrac{1}{a(Z)}) \mathfrak{a}(Z),\\
& B_\eta^b \mathfrak{b}(X) = R(Y-b(Z))\mathfrak{b}(Z),\\
& B_\eta^f \mathfrak{f} (X) = \mathfrak{f}(Z).
\end{split}
\end{equation*}
The operators $B_\eta^a$, $B_\eta^b$, $B_\eta^f$ are the \textit{score operators} for the three
parameters. The overall score $B_\eta (\mathfrak{a},\mathfrak{b},\mathfrak{f})(X)$ when perturbing the three
parameters simultaneously is the sum of the three terms in the previous display.  The
\emph{efficient influence function} of the functional $\chi$ at the point $\eta$ is known to take the form 
(see Example~25.43 of \cite{vandervaart1998} with $\dot\chi_Q(y)$ the current $y-\chi(\eta)$ and
$\phi(y,0)$ the current $(R,Z)$, or the derivation below)
\begin{equation*}\label{eq:infl_fcn}
\widetilde{\chi}_{\eta} (X) = Ra(Z)(Y-b(Z)) + b(Z) - \chi (\eta).
\end{equation*}
We can verify that this is the correct formula by verifying that this function has the two properties defining an
efficient influence function (\cite{vandervaart1998}, page~426). 
First, the derivative at $t=0$ of the functional along a path $t\mapsto\eta_t=(a_t,b_t,f_t)$
as previously, is the inner product of the influence function with the score function of that path:
${\tfrac{d}{dt}}_{|t=0} \chi(\eta_t) = P_\eta \widetilde{\chi}_\eta(X) B_\eta(\mathfrak{a},\mathfrak{b},\mathfrak{f})(X)$
for every path $t \mapsto p_{\eta_t}$ of the above form. 
Second, the function $\widetilde{\chi}_{\eta}$ is contained in the closed linear span of the set of
all score functions.  Indeed, in the present case we have, for all $x$,
\begin{equation}
\label{EqEIF}
\widetilde{\chi}_\eta(x)= B_\eta \xi_\eta (x)=B_\eta^ba(x)+B_\eta^f\bigl(b-\textstyle{\int} b\,dF\bigr)(x),
\end{equation}
where $\xi_\eta$ is the \textit{least favourable direction} given by
\begin{equation*}
\xi_\eta = (0,\xi_\eta^b,\xi_\eta^f) =  \bigl(0,a,b-\textstyle{\int} b\,dF \bigr).
\end{equation*}
The function $\xi_\eta$ is the score function for the submodel $t\mapsto \eta_t$ corresponding to
the perturbations in the directions of $(0,a, b-\int b\,dF)$ on $(a,b,F)$. The latter submodel is
called \textit{least favourable}, since $t\mapsto p_{\eta_t}$ has the smallest information about the
functional of interest at $t=0$.  According to semiparametric theory (e.g.\ Chapter 25 of
\cite{vandervaart1998}, in particular formula (25.22)) a sequence of estimators $\widehat{\chi}_n=\widehat{\chi}_n (X^{(n)})$ is
asymptotically efficient for estimating $\chi(\eta)$ at the true parameter $\eta_0$ if and only if
\begin{equation}
\label{EqEffcieintEstimator}
\widehat{\chi}_n = \chi(\eta_0) + \frac{1}{n} \sum_{i=1}^n \widetilde{\chi}_{\eta_0}(X_i)+o_{P_{\eta_0}}(n^{-1/2}).
\end{equation}
The sequence $\sqrt n\bigl(\widehat{\chi}_n -\chi(\eta_0)\bigr)$ is then asymptotically normal with
mean zero and variance $P_{\eta_0} \widetilde{\chi}_{\eta_0}^2$, which is the smallest possible in a local
minimax sense.  

For a direction $v = (\mathfrak{a},\mathfrak{b},\mathfrak{f})$, the \emph{information norm} corresponding to the score
operator (or LAN norm in the language of \cite{castillo2012,rivoirard2012,castillo2015}) equals
\begin{equation*}
\begin{split}
\| v\|_\eta^2  & := P_{\eta} [(B_{\eta}v)]^2
= \int \Bigl[ \frac{1}{a}\bigl(1-\frac{1}{a}\bigr) \mathfrak{a}^2 + \frac{b(1-b)}{a}  \mathfrak{b}^2 + (\mathfrak{f}-F\mathfrak{f}) ^2 \Bigr]\,dF\\
& =:\|\mathfrak{a}\|_{a}^2 + \|\mathfrak{b}\|_b^2 + \|\mathfrak{f}\|_F^2 .
\end{split}
\end{equation*}
It may be noted that the three components of the score operator are orthogonal, which is 
a consequence of the factorization of the likelihood.
The minimal asymptotic variance $P_{\eta_0} \widetilde{\chi}_{\eta_0}^2$ for estimating $\chi(\eta)$ can
be written in terms of the information norm as
\begin{equation}
\begin{split}
\|\xi_{\eta_0}\|_{\eta_0}^2 &= P_{\eta_0} (B_{\eta_0} \xi_{\eta_0})^2 
= P_{\eta_0}\widetilde{\chi}_{\eta_0}^2 \label{EqEfficientVariance}\\
&= \int a_0b_0(1-b_0)\,dF_0 + \int b_0^2 \,dF_0 - \chi(\eta_0)^2.
\end{split}
\end{equation}

\section{Results}\label{sec:results}
We put a prior probability distribution $\Pi$ on the parameter  $(\eta^a,\eta^b,F)$ or $\eta=(\eta^a,\eta^b,\eta^f)$,
and consider the posterior distribution $\Pi(\cdot|X^{(n)})$ based on the observation $X^{(n)}=(X_1,\dots,X_n)$. 
%By Bayes's formula this is given by
%$$\Pi\bigl(\eta\in B | X^{(n)}\bigr)=\frac{\int_B\prodin p_\eta(X_i)\,d\Pi(\eta)}{\int\prodin p_\eta(X_i)\,d\Pi(\eta)}.$$
This induces posterior distributions on all measurable functions of $\eta$, 
including the functional of interest $\chi(\eta)$. 

We write $\mathcal{L}_\Pi(\sqrt{n}(\chi(\eta) -\widehat{\chi}_n) |X^{(n)})$ for the marginal
posterior distribution of $\sqrt{n}(\chi(\eta)-\widehat{\chi}_n)$, where $\widehat{\chi}_n$ is any
random sequence satisfying \eqref{EqEffcieintEstimator}.
We shall be interested in proving that this distribution asymptotically looks like a centered normal
distribution with variance $\|\xi_{\eta_0}\|_{\eta_0}^2$.  For a precise statement of this approximation,
let $d_{BL}$ be the bounded Lipschitz distance on probability distributions on $\RR$
(see Chapter 11 of \cite{dudley2002}).

\begin{definition}
\label{DefBvM}
Let $X^{(n)}=(X_1,\dots,X_n)$ be i.i.d. observations with $X_i = (Z_i,R_i,R_iY_i)$ arising from the density $p_{\eta_0}$ in \eqref{eq:likelihood_full}, whose distribution we denote by $P_0 = P_{\eta_0}$. We say that the posterior satisfies the \textit{semiparametric Bernstein--von Mises (BvM)} if,
for $\widehat{\chi}_n$ satisfying \eqref{EqEffcieintEstimator} and $\|\xi_{\eta_0}\|_{\eta_0}$ given by \eqref{EqEfficientVariance}, as $n\rightarrow \infty$, 
\begin{align*}
d_{BL} \Bigl( \mathcal{L}_\Pi\bigl(\sqrt{n}(\chi(\eta) -\widehat{\chi}_n\bigr) |X^{(n)}), N(0,\|\xi_{\eta_0}\|_{\eta_0}^2 ) \Bigr) \rightarrow^{P_0} 0.
\end{align*}
\end{definition}

In Sections~\ref{SubsectionGeneralabDir} and~\ref{SectionPropensityScorePriors} we present two general results for priors 
on the parameters $(a, b)$, combined with an independent Dirichlet process prior on $F$. In Section~\ref{SubsectionGeneralabDir} the prior on the pair $(a,b)$ is general, whereas 
in Section~\ref{SectionPropensityScorePriors} we construct a prior on $b$ using an estimator of the propensity score $1/a$, thus linking the two parameters.
Following these general results we specialize to Gaussian process priors and obtain concrete results in Section~\ref{SectionGaussianPriors}.

An alternative to using the Dirichlet process on $F$ is to put a prior on the triple $(a,b,f)$, for $f$ a density of $F$.
A general result can be found in Section~\ref{sec:general_prior_f} below, but it requires stronger conditions for the
BvM theorem to hold. Putting a prior on $f$ introduces the additional bias term\eqref{eq:no_bias_f}, whose vanishing becomes more restrictive as 
the covariate dimension increases and can be problematic in even moderate dimensions. Thus it appears preferable to directly model the distribution $F$.

\subsection{Posterior distribution relative to Dirichlet process prior}
Since the covariates $Z_1,\ldots, Z_n$ are fully observed and the functional of interest $\chi(\eta)$ is an integral
relative to their distribution $F$, intuitively the estimation problem should not depend too much on properties of the covariate distribution.
For $\sqrt n$-estimation this intuition is shown to be correct in \cite{robins2017}. In our Bayesian setup
this suggests to put a prior on $F$ that does not limit this distribution.

The standard ``nonparametric prior'' on the set of probability distributions on a (Polish) sample space is the
Dirichlet process prior \cite{Ferguson1974}. This distribution is characterized by a base
measure $\nu$, which can be any finite measure on the sample space. It is well known that in the model
consisting of sampling $F$ from the Dirichlet process prior and next sampling observations
$Z_1,\ldots, Z_n$ from $F$, the posterior distribution of $F$ given $Z_1,\ldots, Z_n$ is again a
Dirichlet process with updated base measure $\nu+n\F_n$, where $\F_n$ is the empirical
distribution of $Z_1,\ldots, Z_n$. (For full definitions and properties, see the review in Chapter 4 of
\cite{vandervaartbook2017}.)

We utilize the Dirichlet process prior on $F$ together with an independent
prior on the remaining parameters $(a,b)$, constructed from a prior on $(\eta^a,\eta^b)$  using the logistic link
function \eqref{eq:paramtrization}. Because the Dirichlet process prior does not give probability one
to a dominated set of measures $F$, the resulting posterior distribution
of $(a,b,F)$ cannot be derived using Bayes's formula. However, we can obtain
a representation as follows. The parameters and the data are generated through the hierarchical scheme:
\begin{itemize}
\item $F\sim DP (\nu)$ independent from $\eta=(a,b)\sim \Pi$.
\item Given $(F,a,b)$ the covariates $Z_1,\ldots,Z_n$ are i.i.d.\ $F$.
\item Given $(F,a,b,Z_1,\ldots,Z_n)$ the pairs $(R_i,Y_i)$ are independent
from products of binomial distributions with success probabilities $1/a(Z_i)$ and $b(Z_i)$.
\item The observations are $X^{(n)}=(X_1,\ldots, X_n)$ with $X_i=(Z_i, R_i, R_iY_i)$.
\end{itemize}
From this scheme it follows that $F$ and $(R^{(n)}, Y^{(n)})$ are independent given
$(Z^{(n)}, a,b)$, and also that $F$ and $(a,b)$ are conditionally independent given $X^{(n)}$.
We can then conclude that the posterior distribution of $F$ given $X^{(n)}$ is the same
as the posterior distribution of $F$ given $Z^{(n)}$, which is the $DP(\nu+n\F_n)$ distribution.
Furthermore, the posterior distribution of $(a,b)$ given $(F, X^{(n)})$ can be derived
by Bayes's rule from the binomial likelihood of $(R^{(n)}, R^{(n)}Y^{(n)})$ given $Z^{(n)}$,
which is dominated. Thus the posterior distribution is given by
\begin{equation}\label{EqDirichletPosterior}
\begin{split}
&\Pi\bigl((a,b)\in A, F\in B|X^{(n)}\bigr)\\
&\qquad\qquad=\int_B \frac{\int_A  \prodin p_{(a,b)}(R_i,R_iY_i\given Z_i)\,d\Pi(a,b)}
{\int \prodin p_{(a,b)}(R_i,R_iY_i\given Z_i)\,d\Pi(a,b)}\,d\Pi(F\given Z^{(n)}),
\end{split}
\end{equation}
where $p_{(a,b)}$ is the conditional density of $(R,RY)$ given $Z$, given by
\eqref{eq:likelihood_full} with $f$ deleted or taken equal to 1, and 
$\Pi(F\in\cdot\given Z^{(n)})$ is the $DP(\nu+n\F_n)$-distribution.
This formula remains valid if $\nu=0$, which yields the Bayesian bootstrap, 
see Chapter 4.7 of \cite{vandervaartbook2017}, and is also covered
in the theorems below. We suspect that the theorems extend to other exchangeable bootstrap
processes, as considered in \cite{PraestgaardWellner} (see \cite{vdVWPreservation}, Section~3.7.2).
%, that is a ``flat" Dirichlet process, 
%This can be represented as  a multiplier empirical measure 
%$\sum_{i=1}^n W_i \delta_{Z_i}$, where $(W_1,\dots,W_n) \sim Dir_n(1,\dots,1)$.
%, which makes generating samples particularly easy. The
%resampled bootstrap draws can be used to compute both estimates and credible sets. While this lastapproach does not yield a strict ``posterior" distribution, it nonetheless has a Bayesian interpretation corresponding to a non-informative prior. Since our primary focus is the Dirichlet process, in an abuse of notation, we will typically describe the above construction as also arising from a prior distribution.

\subsection{General prior on $(a, b)$ and Dirichlet process prior on $F$}
\label{SubsectionGeneralabDir}
Define $\eta_t(\eta)=\eta_t(\eta; n,\xi_{\eta_0})$ to be a perturbation of $\eta=(\eta^a,\eta^b)$ in the least favourable direction, 
restricted to the components corresponding to $a$ and $b$:
\begin{equation}\label{eq:LFD_F_no_den}
\eta_t(\eta) %=(\eta_t^a,\eta_t^b) 
= \Bigl(\eta^a, \eta^b - \frac{t}{\sqrt n}\xi_{\eta_0}^b\Bigr).
\end{equation}
%Further let $\mF$ denote the set of all probability distributions on $\mZ$.

\begin{theorem}\label{thm:general_F}
Consider a prior $\Pi$ consisting of an arbitrary prior on $\eta = (\eta^a,\eta^b)$ and an independent
Dirichlet process prior on $F$. 
Assume that there exist measurable sets $\mH_n$ of functions $\eta=(\eta^a,\eta^b)$ satisfying
\begin{align}
\Pi(\eta\in \mH_n \given X^{(n)}) &\rightarrow^{P_0} 1,\label{EqPriorConcentration2}\\
 \sup_{b=\Psi(\eta^b):\eta\in \mH_n} \|b-b_0\|_{L^2(F_0)}  &\rightarrow 0,\label{EqConsistencyb2}\\
\sup_{b=\Psi(\eta^b): \eta \in \mH_n} |\G_n[b-b_0]| &\rightarrow^{P_0} 0 \label{eq:prob_sup2}.
\end{align}
If for the path $\eta_t(\eta)$ in \eqref{eq:LFD_F_no_den} and every $t$,
\begin{align}\label{eq:prior_shift_cond2}
\frac{\int_{\mH_n} \prodin p_{\eta_t(\eta)}(R_i,R_iY_i\given Z_i) \,  d\Pi(\eta)}
{\int_{\mH_n} \prodin p_{\eta}(R_i,R_iY_i\given Z_i)\, d\Pi(\eta)} \rightarrow^{P_0}1,
\end{align}
%where $\ell_n^{a,b}(\eta)= \ell_n^a(\eta^a) + \ell_n^b(\eta^b)$ denotes the log-likelihood in terms of $\eta = (\eta^a,\eta^b)$. 
then the posterior distribution \eqref {EqDirichletPosterior} satisfies the BvM theorem.
\end{theorem}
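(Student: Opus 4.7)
The plan is to exploit the factorization of the posterior given in \eqref{EqDirichletPosterior}: given $X^{(n)}$, $F\sim DP(\nu+n\mathbb{F}_n)$ is conditionally independent of $\eta=(\eta^a,\eta^b)$, so I can prove a conditional BvM for each piece separately and then combine them via Fubini. Writing $\chi(\eta)=\int b\,dF$ and substituting the efficient influence function expansion for $\widehat{\chi}_n$ together with the identity $\widetilde{\chi}_{\eta_0}=B_{\eta_0}^b a_0+(b_0-F_0b_0)$, I obtain
\begin{align*}
\sqrt{n}\bigl(\chi(\eta)-\widehat{\chi}_n\bigr)
&= \sqrt{n}\!\int b_0\,d(F-\mathbb{F}_n) + \sqrt{n}F_0(b-b_0) \\
&\quad - \mathbb{G}_n B_{\eta_0}^b a_0 + R_n + o_{P_0}(1),
\end{align*}
where the remainder $R_n$ lumps $\sqrt{n}\int(b-b_0)\,d(F-\mathbb{F}_n)$ and $\mathbb{G}_n(b-b_0)$. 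The first of these is $o_{P_0}(1)$ in posterior probability by \eqref{EqConsistencyb2} together with a second-moment bound for $DP(\nu+n\mathbb{F}_n)$, and the second by \eqref{eq:prob_sup2} combined with \eqref{EqPriorConcentration2}.

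For the Dirichlet piece I would invoke the classical BvM for the Dirichlet posterior (Lo's functional bootstrap; cf.\ Chapter~12 of \cite{vandervaartbook2017}) to conclude that, conditionally on $Z^{(n)}$,
$$\sqrt{n}\!\int b_0\,d(F-\mathbb{F}_n)\;\rightsquigarrow\; N\bigl(0,\var_{F_0}(b_0)\bigr)$$
in $P_0$-probability. This provides one of the two independent Gaussian summands whose variances add to $\|\xi_{\eta_0}\|_{\eta_0}^2$ via \eqref{EqEfficientVariance}, since $\|\xi_{\eta_0}\|_{\eta_0}^2=\int a_0 b_0(1-b_0)\,dF_0+\var_{F_0}(b_0)$.

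The core of the proof is to show that the conditional $\eta$-posterior of $\sqrt{n}F_0(b-b_0)$ is asymptotically $N\bigl(\mathbb{G}_n B_{\eta_0}^b a_0,\,\int a_0 b_0(1-b_0)\,dF_0\bigr)$, so that the random drift cancels the deterministic $-\mathbb{G}_n B_{\eta_0}^b a_0$ in the expansion above and the residual variance combines with $\var_{F_0}(b_0)$ to yield exactly $\|\xi_{\eta_0}\|_{\eta_0}^2$. Since the likelihood factorizes, shifting along \eqref{eq:LFD_F_no_den} only affects the $b$-component, and a second-order Taylor expansion of $\log\Psi$ in $\eta^b$ yields, uniformly in $\eta\in\mH_n$,
\begin{align*}
\ell_n\bigl(\eta_t(\eta)\bigr)-\ell_n(\eta)
&= t\sqrt{n}F_0(b-b_0) - t\,\mathbb{G}_n B_{\eta_0}^b a_0 \\
&\quad - \tfrac{t^2}{2}\!\int a_0 b_0(1-b_0)\,dF_0 + o_{P_0}(1),
\end{align*}
with \eqref{EqConsistencyb2} controlling the convergence of the quadratic coefficient $\mathbb{P}_n[Rb(1-b)a_0^2]$ and \eqref{eq:prob_sup2}, transferred (via the boundedness of $Ra_0$ from the Assumption) to the class $\{R(b-b_0)a_0\}$, killing the linear empirical-process remainder $\mathbb{G}_n[R(b_0-b)a_0]$. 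Plugging this LAN expansion into the prior-shift hypothesis \eqref{eq:prior_shift_cond2} and dividing by the normalizing integral gives
$$E_{\Pi(\cdot|X^{(n)})}\!\Bigl[e^{\,t\sqrt{n}F_0(b-b_0)}\,\mathbf{1}_{\mH_n}\Bigr] \;\longrightarrow^{P_0}\; \exp\!\Bigl(t\,\mathbb{G}_n B_{\eta_0}^b a_0 + \tfrac{t^2}{2}\!\!\int a_0 b_0(1-b_0)\,dF_0\Bigr),$$
which upgrades to characteristic-function convergence by a standard truncation/analytic-continuation argument, delivering the desired conditional Gaussian limit.

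The hardest step will be the uniform LAN expansion on the posterior support $\mH_n$, specifically transferring the Donsker-type hypothesis \eqref{eq:prob_sup2} from the base class $\{b-b_0\}$ to the multiplier class $\{R(b-b_0)a_0\}$; here the boundedness of $1/a_0$ from the Assumption is essential. A secondary technical point is upgrading the real-$t$ Laplace-transform convergence provided by \eqref{eq:prior_shift_cond2} to complex-$t$ characteristic-function convergence, and then combining the two independent Gaussian limits via the conditional independence of $F$ and $\eta$ under the posterior to extract $N(0,\|\xi_{\eta_0}\|_{\eta_0}^2)$.
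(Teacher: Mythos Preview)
Your approach is correct and shares the core ingredient with the paper: the uniform LAN expansion of $\ell_n^b(\eta_t)-\ell_n^b(\eta)$ on $\mH_n$, combined with the prior-shift hypothesis \eqref{eq:prior_shift_cond2}, to extract the $\eta^b$-part of the Gaussian limit. The paper, however, organises the Dirichlet piece differently. Rather than first decomposing $\sqrt{n}(\chi(\eta)-\widehat{\chi}_n)$ into a fixed-$b_0$ Dirichlet term, an $\eta$-only term and a cross remainder, it studies the Laplace transform of the full quantity, applies Fubini, and evaluates the inner integral $\int e^{t\sqrt{n}\int b\,d(F-F_0)}\,d\Pi(F\mid Z^{(n)})$ \emph{uniformly} over $b=\Psi(\eta^b)$ with $\eta\in\mH_n$ via a dedicated Dirichlet Laplace-transform lemma (Lemma~\ref{lem:DP_exp_laplace_trans}). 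Your decomposition trades that uniform Laplace lemma for the classical single-function Dirichlet BvM applied to $b_0$, plus a second-moment bound forcing $\sqrt{n}\int(b-b_0)\,d(F-\mathbb{F}_n)\to 0$ in posterior probability. Note that this remainder bound still needs uniformity: it reduces to $\sup_{\eta\in\mH_n}\mathbb{F}_n(b-b_0)^2\to^{P_0}0$, which follows from \eqref{EqConsistencyb2}--\eqref{eq:prob_sup2} via the same preservation tool (the paper's Lemma~\ref{lem:emp_proc_change} or Lemma~\ref{LemmaGCnPreservation}) that you already invoke for the multiplier class $\{R(b-b_0)a_0\}$ in the LAN step. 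So the uniformity has not disappeared, only moved. Two minor points: the upgrade from Laplace to weak convergence does not require analytic continuation to complex $t$---real-$t$ convergence on a two-sided neighbourhood of zero suffices (Lemma~\ref{LemmaLaplaceTransform}); and the combination of the two conditionally independent Gaussian limits is simply convolution, which is continuous under weak convergence, so your final step is unproblematic.
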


Conditions \eqref{EqPriorConcentration2}--\eqref{eq:prob_sup2} permit to control the remainder terms
in an expansion of the likelihood. They require that the posterior distribution of $b$ concentrates on 
shrinking neighbourhoods about the true parameter $b_0$ (with no similar requirement for $a$),  and hence
mostly require consistency.

The uniformity in $b$ required in \eqref{eq:prob_sup2} is unpleasant, as it will typically require
that the class of $b$ supported by the posterior distribution is not unduly large.  The condition is
linked to using the likelihood and similar conditions arise in maximum likelihood based estimation
procedures, although \eqref{eq:prob_sup2} seems significantly weaker, as the uniformity is required only on
the essential support of the posterior distribution, which might be much smaller than the full
parameter space.  The use of estimating equations can avoid uniformity
conditions by sample splitting \cite{robins2017}. In the Bayesian framework one might
similarly base posterior distributions of different parameters
on given subsamples, but this is unnatural so that we do not pursue this
route here.

Under \eqref{EqConsistencyb2} a sufficient condition for \eqref{eq:prob_sup2} is that the class of functions $b$ in
the condition is contained in a fixed $F_0$-Donsker class (see Lemma~3.3.5 of
\cite{vandervaart1996}). In particular, it suffices that the posterior concentrates on a bounded set
in $H^s$ for $s>d/2$. While this condition is easy to establish for certain priors, such as uniform
wavelet priors \cite{gine2011}, for the Gaussian process priors considered below we
employ relatively complicated arguments using metric entropy bounds to verify the condition. 

Condition \eqref{eq:prior_shift_cond2} measures the invariance 
of the prior for the full nuisance parameter under a shift in the least favourable direction $\xi_{\eta_0}^b$.
It is a structural condition on the combination of prior and model, and if not satisfied may
destroy the $\sqrt n$-rate in the BvM theorem (see \cite{castillo2012}
or \cite{vandervaartbook2017} for further discussion). Although we shall verify the condition
for several priors of interest below, this condition may impose smoothness
conditions on the parameters, and prevent so-called ``double robustness''. We 
shall remove this condition for special priors in Theorem~\ref{thm:dep_prior_gen_EB} below.

The invariance involves the component $\xi_0^b$ only, and not the other nonzero component
$\xi_0^f$ of the least favourable direction. In contrast, in Theorem~\ref{thm:general_f}, which puts a
prior on the covariate density $f$, the invariance involves the full least favourable direction
(see \eqref{eq:LFD_f_den}). Intuitively, the Dirichlet process is a fully nonparametric prior
that never causes this type of bias.

Since $\xi_{\eta_0}^b = a_0$, Theorem~\ref{thm:general_F} implicitly requires conditions on $a_0$
through \eqref{eq:prior_shift_cond2}, even though $a$ does not appear in the functional
$\chi(\eta)$. Such conditions become explicit for concrete priors below.

\begin{remark}
If the quotient on the left side of \eqref{eq:prior_shift_cond2} is asymptotic to $e^{\mu_nt}(1+o_{P_0}(1))$
for some possibly random sequence of real numbers $\mu_n$, then
the assertion of the BvM theorem is still true, but the
normal approximation $N(0,\|\xi_{\eta_0}\|_{\eta_0}^2 ) $ must be replaced
by $N(\mu_n,\|\xi_{\eta_0}\|_{\eta_0}^2 )$. 
See \cite{castillo2015,rivoirard2012} for further discussion.
The same is true for all other results in the following.
\end{remark}

\begin{remark}
If the supremum in \eqref{eq:prob_sup2}, or similar variables below, is not measurable, then we interpret this
statement in terms of outer probability. 
\end{remark}

Formula \eqref{EqDirichletPosterior} shows that a draw from the posterior distribution of the
functional of interest $\chi(\eta)=\int b\,dF$ is obtained by independently drawing $b$ from its
posterior distribution and $F$ from the $DP(\nu+n\F_n)$-distribution, and next forming the integral
$\int b\,dF$. The posterior distribution of $b$ is constructed from the conditional likelihood of
$(R^{(n)}, R^{(n)}Y^{(n)})$ given $Z^{(n)}$ without involving $F$ or its prior
distribution. Instead of a Bayesian-motivated or bootstrap type choice for $F$, which requires
randomization given $Z^{(n)}$, one could also directly plug in an estimator of $F$ based on
$Z^{(n)}$ and randomize only $b$ from its posterior distribution. The empirical distribution $\F_n$ is an obvious choice.
The proof of Theorem~\ref{thm:general_F} suggests that for this choice, under the conditions of the theorem,
\begin{align*}
d_{BL} \Bigl( \mathcal{L}_\Pi\bigl(\sqrt{n}(\chi(\eta) - \widehat{\chi}_n)  |X^{(n)}\bigr), N(0,\|\xi_{\eta_0}^{b_0}\|_{b_0}^2 ) \Bigr) \rightarrow^{P_0} 0.
\end{align*}
Compared to the BvM theorem this suggests a normal approximation with
the same centering, but a smaller variance, since the variance in the BvM theorem is the sum
$\|\xi_{\eta_0}^{b_0}\|_{b_0}^2+ \|\xi_{\eta_0}^{f_0}\|_{F_0}^2$. The lack of
posterior randomization of $F$ thus results in an underestimation of the asymptotic variance. Using
credible sets resulting from this `posterior' would give overconfident (wrong) uncertainty quantification. 
Since our focus is on the Bayesian approach, we do not purse such generalizations further.

\subsection{Propensity score-dependent priors}
\label{SectionPropensityScorePriors}
To reduce unnecessary regularity conditions, it can be useful to use 
a preliminary estimate $\hat{a}_n$ of the inverse propensity score
%$1/a_0$ to improve performance of semiparametric estimators for $\chi(\eta)$
\cite{robins2017,robins1995,rotnitzky1995}. In a Bayesian setting, \cite{hahn2017} suggest adding an 
estimate of the propensity score evaluated at the data as an additional covariate when using BART for causal inference \cite{hill2011}.
In this section we employ preliminary
estimators $\hat a_n$ to augment the prior on $b$ with the aim of weakening the
conditions required for a semiparametric BvM.

Suppose we have a sequence of estimators $\hat{a}_n$ of the inverse propensity score satisfying
\begin{align}\label{eq:estimator_prop}
\|\hat{a}_n-a_0\|_{L^2(F_0)} = O_{P_0}(\rho_n),
\end{align}
for some sequence $\rho_n \rightarrow 0$. Since the propensity score is just a (binary)
regression function of $R$ onto $Z$, standard (adaptive) smoothing estimators satisfy this condition
with rate $\rho_n=n^{-{\alpha}/({2\alpha+d})}$ if the propensity score is assumed to
be contained in $C^\alpha([0,1]^d)$, which is the minimax rate over this space 
(note that $\hat a_n-a_0=\hat a_n a_0(1/a_0-1/\hat a_n)$ will attain at least the rate of an estimator
of the propensity score $1/a_0$ itself).
Consider the following prior on $b$:
\begin{equation}\label{eq:prior_dep_EB}
b(z) = \Psi\bigl(W_z^b + \lambda \hat{a}_n(z)\bigr),
\end{equation}
where $W^b$ is a continuous stochastic process independent of the random variable
$\lambda$, which follows a prior $N(0,\sigma_n^2)$ distribution for given variance $\sigma_n^2$ 
(potentially varying with $n$, but fixed is allowed). The additional parameter $\lambda$ has the role
of making the prior link between the parameters $b$ and $a$ flexible; the variance $\sigma_n^2$  will
be required not too small below.

We assume that $\hat{a}_n$ is based on observations that are independent of $X_1,\ldots,X_n$, the observations used in the
likelihood to obtain the posterior distribution. Otherwise, the prior \eqref{eq:prior_dep_EB} becomes
data-dependent, which significantly complicates the technical analysis. This independence seems, however, unnecessary in practice. 
The analogous prior to \eqref{eq:prior_dep_EB} for a continuous regression model is investigated
 empirically in the companion paper \cite{RaySzabo}, 
where it performs well when $1/\hat a_n$ is trained on the same data as the posterior.

We may think of $\hat a_n$ as a degenerate prior on $a$, and then 
by the factorization of the likelihood the part of the likelihood involving 
$a$ cancels from the posterior distribution
\eqref {EqDirichletPosterior} if marginalized to $(b,F)$ (and hence $\chi(\eta)$).
Of course the same will happen if we assign an independent prior to $a$.
Thus in both cases it is unnecessary to further discuss a prior on $a$.

%For the following, we write $\eta^b = w+\lambda \hat{a}_n$ to align with the prior parametrization.

\begin{theorem}\label{thm:dep_prior_gen_EB}
Given independent estimators $\hat{a}_n$ satisfying \eqref{eq:estimator_prop} and having
$\|\hat{a}_n\|_\infty = O_{P_0}(1)$,
 consider the prior \eqref{eq:prior_dep_EB} for $b$ with
the stochastic process $W^b$ and random variable $\lambda\sim N(0,\sigma_n^2)$ independent, and 
assign $F$ an independent Dirichlet process prior. 
Assume that there exist measurable sets $\mH_n^b$ of functions satisfying, for every $t\in\R$ and
some numbers $u_n, \varepsilon_n^b\rightarrow 0$,
\begin{align}
\Pi\bigl(\lambda: |\lambda|\le u_n\sigma_n^2\sqrt{n}|X^{(n)}\bigr)&\rightarrow^{P_0} 1,
\label{eq:dep_prior_lambda_cond}\\
\Pi \bigl( (w,\lambda): w+(\lambda + tn^{-1/2})\hat{a}_n\in \mH_n^b|X^{(n)}\bigr)&\rightarrow^{P_0} 1,
\label{EqPriorConcentration3}\\
\sup_{b=\Psi(\eta^b):\eta^b\in \mH_n^b} \|b-b_0\|_{L^2(F_0)}  &\le \varepsilon_n^b,\label{EqConsistencyb3}\\
\sup_{b=\Psi(\eta^b): \eta^b \in \mH_n^b} \bigl|\G_n[b-b_0]\bigr| &\rightarrow^{P_0} 0 \label{eq:prob_sup3}.
\end{align}
If $n\sigma_n^2\rightarrow \infty$ and $\sqrt{n}\rho_n\varepsilon_n^b \rightarrow 0$, then
the posterior distribution satisfies the semiparametric BvM theorem.
\end{theorem}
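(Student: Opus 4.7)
The overall plan is to reduce Theorem~\ref{thm:dep_prior_gen_EB} to Theorem~\ref{thm:general_F} by verifying its hypotheses \eqref{EqPriorConcentration2}--\eqref{eq:prior_shift_cond2} for the propensity score-dependent prior \eqref{eq:prior_dep_EB}. Since conditions \eqref{EqConsistencyb2}--\eqref{eq:prob_sup2} and the least favourable shift \eqref{eq:LFD_F_no_den} only involve $\eta^b$, one may take $\mH_n = \mH_n^a \times \mH_n^b$ for any $\mH_n^a$ of full posterior probability (recall that the prior on $\eta^a$ cancels from the marginal posterior of $(b, F)$), in which case \eqref{EqPriorConcentration2}, \eqref{EqConsistencyb2} and \eqref{eq:prob_sup2} reduce directly to their primed analogues \eqref{EqPriorConcentration3}, \eqref{EqConsistencyb3} and \eqref{eq:prob_sup3}. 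The entire remaining task is therefore to verify the prior-shift invariance condition \eqref{eq:prior_shift_cond2}; this is exactly where the additional parameter $\lambda\hat{a}_n$ is designed to help.

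The key algebraic identity driving the proof is
\begin{align*}
\eta_t^b = W^b + \lambda\hat{a}_n - \tfrac{t}{\sqrt n}a_0 = W^b + \bigl(\lambda - \tfrac{t}{\sqrt n}\bigr)\hat{a}_n + \tfrac{t}{\sqrt n}(\hat{a}_n - a_0),
\end{align*}
which exhibits the least favourable perturbation as a shift in $\lambda$ plus a small residual $L^2$ perturbation. Substituting $\tilde\lambda = \lambda - t/\sqrt n$ in the numerator of \eqref{eq:prior_shift_cond2}, the $\lambda$-integral picks up the Gaussian density ratio $\exp(-t\tilde\lambda/(\sqrt n\sigma_n^2) - t^2/(2n\sigma_n^2))$, which by \eqref{eq:dep_prior_lambda_cond} is $1 + o_{P_0}(1)$ on the essential posterior support, since there $|\tilde\lambda|/(\sqrt n\sigma_n^2) \leq u_n + |t|/(n\sigma_n^2) \to 0$ using $n\sigma_n^2 \to \infty$. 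The residual perturbation of $\eta^b$ by $\frac{t}{\sqrt n}(\hat a_n - a_0)$ generates a log-likelihood change that I would expand by Taylor's theorem in $\eta^b$: the quadratic remainder is bounded by $t^2 \P_n(\hat a_n - a_0)^2 = O_{P_0}(t^2\rho_n^2) \to 0$, while the linear term $\frac{t}{\sqrt n}\sum_i R_i(Y_i - \tilde b(Z_i))(\hat a_n - a_0)(Z_i)$ decomposes into an empirical process piece and a bias piece. The bias piece equals $t\sqrt n \int (b_0 - \tilde b)(\hat a_n - a_0)/a_0 \, dF_0 = O_{P_0}(\sqrt n \rho_n \varepsilon_n^b) \to 0$ by Cauchy--Schwarz and the rate assumption $\sqrt n \rho_n \varepsilon_n^b \to 0$, while the empirical process piece vanishes because $\hat a_n$ is sample-independent, reducing it to an i.i.d.\ average of mean-zero terms with variance of order $\rho_n^2$.

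The main obstacle is combining these two perturbation arguments \emph{under the integral sign}: showing that both the Gaussian ratio and the likelihood-ratio correction are uniformly $1 + o_{P_0}(1)$ on a set of $(W^b,\lambda)$ carrying full posterior mass. This requires intersecting the high-probability events from \eqref{eq:dep_prior_lambda_cond} and \eqref{EqPriorConcentration3}, and controlling the empirical process piece uniformly over $\tilde b$ in the posterior support, where condition \eqref{eq:prob_sup3} plays the same role as \eqref{eq:prob_sup2} in Theorem~\ref{thm:general_F}. Once \eqref{eq:prior_shift_cond2} is established, the BvM conclusion follows immediately from Theorem~\ref{thm:general_F}, the independent Dirichlet process on $F$ contributing the component $\|\xi_{\eta_0}^{f_0}\|_{F_0}^2$ to the limiting variance \eqref{EqEfficientVariance}.
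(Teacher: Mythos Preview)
Your proposal is correct and follows essentially the same route as the paper: reduce to Theorem~\ref{thm:general_F}, verify \eqref{EqPriorConcentration2}--\eqref{eq:prob_sup2} directly from \eqref{eq:dep_prior_lambda_cond}--\eqref{eq:prob_sup3}, and establish \eqref{eq:prior_shift_cond2} via the identity $\eta^b - ta_0/\sqrt n = W^b + (\lambda - t/\sqrt n)\hat a_n + t(\hat a_n - a_0)/\sqrt n$, handling the $\lambda$-shift through the Gaussian density ratio (controlled by \eqref{eq:dep_prior_lambda_cond} and $n\sigma_n^2\to\infty$) and the residual through a Taylor expansion of $\ell_n^b$ whose bias term is $O(\sqrt n\rho_n\varepsilon_n^b)$.

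Two minor points where the paper is more explicit than your sketch. First, the paper begins by conditioning on $\hat a_n$ and reducing to a \emph{deterministic} sequence $a_n$ with $\|a_n\|_\infty\le M$ and $\|a_n-a_0\|_{L^2(F_0)}\le M\rho_n$; this disintegration step (justified by independence of $\hat a_n$ and $X^{(n)}$) is what makes your appeal to ``sample-independence'' rigorous and avoids measurability issues. Second, your claim that the empirical-process part of the linear term ``reduces to an i.i.d.\ average of mean-zero terms with variance of order $\rho_n^2$'' is only literally true for the piece $\G_n[R(Y-b_0)(\hat a_n-a_0)]$; the piece $\G_n[R(b_0-\tilde b)(\hat a_n - a_0)]$ still depends on $\tilde b$ and needs the uniform control from \eqref{eq:prob_sup3} (the paper handles this via Lemma~\ref{lem:lik_b_approx}, which packages the whole Taylor argument and invokes Lemma~\ref{lem:emp_proc_change} for exactly this term). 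You do flag this uniformity issue in your ``main obstacle'' paragraph, so the plan is sound; just be aware that the two halves of the linear term require different arguments.
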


The advantage of this theorem over Theorem~\ref{thm:general_F} is that \eqref{eq:prior_shift_cond2} does
not appear in its conditions. (The theorem adds \eqref{eq:dep_prior_lambda_cond} and
\eqref{EqPriorConcentration3}, but these are relatively mild.) As noted above, condition
\eqref{eq:prior_shift_cond2} requires a certain invariance of the prior of $b$ in the the least
favourable direction $\xi_{\eta_0}^b = a_0$, and typically leads to smoothness requirements on
$a$. In contrast we show below that Theorem~\ref{thm:dep_prior_gen_EB} can yield the BvM theorem for
propensity scores $1/a$ of arbitrarily low regularity. Thus the theorem is able to achieve
what could be named \emph{single robustness}. Whether  ``double robustness'', the ability of also
handling response functions $b$ of arbitrarily low smoothness, is also achieved remains unclear.
Specifically, we have not been able to verify condition \eqref{eq:prob_sup3} 
without assuming that the smoothness of $b$ is above the usual threshold ($d/2$  in $d$ dimensions).

The single robustness is achieved by perturbing the prior process for $b$ in the
least favourable direction using the auxiliary variable $\lambda$.
Since the least favourable direction $a_0$ is unknown, this is 
replaced with an estimate $\hat{a}_n$. 

Condition \eqref{eq:dep_prior_lambda_cond} puts a lower bound on the variability of the perturbation,
i.e.\ on the standard deviation $\sigma_n$ of $\lambda$. An easy method to ascertain this condition
is to show that the prior mass of the set $\lambda$ in the left side is exponentially small and next
invoke Lemma~\ref{lem:small_prior_prob}.  Specifically, by the univariate Gaussian tail bound the
prior mass of $\{\lambda: |\lambda|> u_n\sigma_n^2\sqrt{n}\}$ is bounded above by
$e^{-u_n^2\sigma_n^2n/2}$.  If the Kullback-Leibler neighbourhood in
Lemma~\ref{lem:small_prior_prob} has prior probability at least $e^{-n(\varepsilon_n^b)^2}$, then
the lemma gives the sufficient condition $u_n^2\sigma_n^2\gtrsim (\varepsilon_n^b)^2$ for
\eqref{eq:dep_prior_lambda_cond}, i.e.\ $\sigma_n\gg \varepsilon_n^b$.

\subsection{Specialization to Gaussian process priors}
\label{SectionGaussianPriors}
In this section we specialize Theorems~\ref{thm:general_F} and \ref{thm:dep_prior_gen_EB} to Gaussian
process priors. In all examples the priors on the three parameters $a$,
$b$ and $F$ are independent.  Since $a$ does not appear in $\chi(\eta)$ and the likelihood
\eqref{eq:likelihood_full} factorizes over $a$, $b$ and $F$, the $a$ terms cancel from the marginal
posterior distribution of $\chi(\eta)$. Thus the prior on $a$ is irrelevant, and it is not necessary
to consider it.

For simplicity we take the covariate space to be the unit cube $\mZ=[0,1]^d$.
Given a mean-zero Gaussian process $W^b= (W_z^b: z\in [0,1]^d)$, 
we consider both the propensity score-dependent prior for $b$ given by \eqref{eq:prior_dep_EB} and the more simple prior
\begin{align}\label{eq:prior_b}
b(z) &= \Psi(W_z^b ).
\end{align}
There are a great variety of Gaussian processes, and their success in
nonparametric estimation is known to depend on their sample smoothness, as measured through their small
ball probability (see \cite{vandervaart2008,vdVvZRescaling,vdVvZGamma,vdVvZGaussianLearning}).
We derive a proposition on general Gaussian processes and consider the following specific examples. 

\begin{example}[Riemann-Liouville]
In dimension $d=1$, the  \textit{Riemann-Liouville} process released at zero of regularity $\bar{\beta}>0$ is defined by
\begin{align}\label{eq:RL_process}
%R_t^{\bar{\beta}} 
W^b_z= \sum_{k=0}^{\lfloor \bar{\beta}\rfloor +1} g_k z^k + \int_0^z (z-s)^{\bar{\beta}-1/2}\, dB_s, \qquad z\in[0,1],
\end{align}
where the $(g_k)$ are i.i.d.\ standard normal random variables and $B$ is an independent Brownian motion. 
This process is appropriate for nonparametric  modelling of $C^{\bar{\beta}}([0,1])$-functions. We shall investigate
the effect of the smoothness parameter $\bar\beta$ on the BvM theorem. 
\end{example}

\begin{example}[Gaussian series]
Another commonly used Gaussian process prior consists of a finite series expansion with Gaussian coefficients. 
Let $\{\psi_{jk}: j\geq 1, k =0,\dots,2^{jd}-1\}$ denote a sufficiently regular boundary-adapted Daubechies wavelet basis 
of $L^2([0,1]^d)$. We assume it is regular enough for the decay of the wavelet coefficients to characterize all the relevant Besov $B_{\infty\infty}^s$-norms
(which are equal to the $C^s$-H\"older norms for $s\not\in\mathbb{N}$. For details on such wavelets and Besov spaces, see Chapter 4.3 of \cite{gine2016}.)
Consider the  prior 
\begin{equation}\label{eq:series_prior}
W_z^b = \sum_{j=1}^{J_{\bar{\beta}}} \sum_{k=0}^{2^{jd}-1} \sigma_j g_{jk} \psi_{jk}(z), \quad \quad g_{jk}\sim^{iid} N(0,1),
\end{equation}
where $2^{J_{\bar{\beta}}} \sim n^{1/(2\bar{\beta}+d)}$, which tends to infinity with $n$, 
is the optimal dimension of a finite-dimensional model if the true parameter is known to be 
$\bar{\beta}$-smooth and $\sigma_j = 2^{-j(r+d/2)}$ for $r \geq 0$. 
Since we wish to perform the prior regularization via the truncation level $J_{\bar{\beta}}$ rather than the scaling coefficients $\sigma_j$, 
we restrict to considering $r \leq \beta \wedge \bar{\beta}$, for instance $r=0$.
\end{example}

We can view both processes as Borel-measurable maps in the Banach space $C([0,1]^d)$, equipped
with the uniform norm $\|\cdot\|_\infty$. In the following proposition we consider a general zero-mean 
Gaussian process of this type. Such a process determines a so-called reproducing kernel Hilbert space (RKHS) $(\H^b,\|\cdot\|_{\H^b})$, 
and a  ``concentration function'' at $\eta_0^b$, defined as, for $\varepsilon>0$,
\begin{align}
\label{EqConcentrationFunction}
\phi_{\eta_0^b} (\varepsilon) 
= \inf_{h\in \H^b: \|h-\eta_0^b\|_\infty< \varepsilon} \|h\|_{\H^b}^2 - \log P(\|W^b\|_\infty < \varepsilon).
\end{align}
For standard statistical models, the posterior contraction rate $\varepsilon_n^b$ for such a Gaussian process prior
is linked to the solution of the equation 
\begin{align}
\phi_{\eta_0^b} (\varepsilon_n^b) \sim n (\varepsilon_n^b)^2.
\label{eq:gaus_conc_rate}
\end{align}
For details see \cite{vandervaart2008b} and \cite{vandervaart2008}.

\begin{proposition}\label{prop:gp_gen_F}
Consider the prior \eqref{eq:prior_b} on $b$ for a Gaussian process $W^b$
with values in $C([0,1]^d)$ combined with an independent Dirichlet process
prior on $F$. Let $\varepsilon_n^b\rightarrow 0$ satisfy \eqref{eq:gaus_conc_rate}. 
Suppose there exist sequences   $\xi_n \in \H^{b} $ and $\zeta_n^b\rightarrow 0$ such that
\begin{equation}
\label{eq:RKHS_cond_b}
\|\xi_n^b - \xi_{\eta_0}^b\|_\infty \leq \zeta_n^b, \quad \quad \|\xi_n^b\|_{\H^{b}} \leq \sqrt{n} \zeta_n^b, \quad \quad \sqrt{n}\varepsilon_n^b \zeta_n^b \rightarrow 0.
\end{equation}
Suppose further that there exist measurable sets $\mH_n^b$ of functions $\eta^b$
such that %$\Pi(\eta^b\in\bar{\mG}_n|X^{(n)}) \rightarrow^{P_0} 1$ and
$\Pi(\eta^b\in (\mH_n^b -t\xi_n^b/\sqrt{n})|X^{(n)})\rightarrow^{P_0} 1$ for every $t\in \R$ and
\eqref{eq:prob_sup2} holds.
%\begin{align}
%\label{EqDonskertypeb}
%\sup_{b=\Psi(\eta^b): \eta^b \in \bar{\mG}_n} \bigl|\G_n[b-b_0]\bigr| \rightarrow^{P_0} 0.
%\end{align}
Then the posterior distribution satisfies the semiparametric BvM theorem.
\end{proposition}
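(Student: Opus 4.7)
The plan is to reduce Proposition~\ref{prop:gp_gen_F} to Theorem~\ref{thm:general_F} by verifying its four hypotheses, with $\mH_n$ essentially equal to the set $\mH_n^b$ given in the statement, intersected with an $L^2(F_0)$-consistency neighbourhood of $b_0$ of radius a multiple of $\varepsilon_n^b$. Conditions \eqref{EqPriorConcentration2} and \eqref{EqConsistencyb2} then follow from the assumed shift concentration $\Pi(\eta^b\in(\mH_n^b-t\xi_n^b/\sqrt n)|X^{(n)})\to^{P_0} 1$ (taken at $t=0$) combined with standard Gaussian process posterior contraction at rate $\varepsilon_n^b$ determined by \eqref{EqConcentrationFunction}--\eqref{eq:gaus_conc_rate}; since $\|\xi_n^b\|_\infty$ is bounded uniformly in $n$, the shift by $t\xi_n^b/\sqrt n$ perturbs the $L^2$-consistency set only on a scale $n^{-1/2}\ll\varepsilon_n^b$. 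Condition \eqref{eq:prob_sup2} is directly assumed.

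The substantive step is verifying the prior-shift invariance \eqref{eq:prior_shift_cond2} for the path $\eta_t(\eta)=(\eta^a,\eta^b-t\xi_{\eta_0}^b/\sqrt n)$. Because $\xi_{\eta_0}^b=a_0$ is not in general an element of the RKHS $\H^b$, the Cameron--Martin theorem cannot be invoked directly on $\xi_{\eta_0}^b$. Using the admissible approximant $\xi_n^b\in\H^b$ supplied by \eqref{eq:RKHS_cond_b} and writing $\delta_n^b:=\xi_{\eta_0}^b-\xi_n^b$ with $\|\delta_n^b\|_\infty\le\zeta_n^b$, I split
\begin{equation*}
\eta^b-\frac{t}{\sqrt n}\xi_{\eta_0}^b=\Bigl(\eta^b-\frac{t}{\sqrt n}\xi_n^b\Bigr)-\frac{t}{\sqrt n}\delta_n^b,
\end{equation*}
handling the first, admissible part via Cameron--Martin and the second, sup-norm-small part by Taylor expansion of the log-likelihood. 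Cameron--Martin rewrites the numerator of \eqref{eq:prior_shift_cond2} as an integral against the unshifted prior, weighted by the density $\exp(-(t/\sqrt n)U^{\xi_n^b}(\eta^b)-t^2\|\xi_n^b\|_{\H^b}^2/(2n))$, where $U^{\xi_n^b}$ is the centered Gaussian linear functional on $\H^b$ corresponding to $\xi_n^b$. The quadratic Cameron--Martin term is bounded by $t^2(\zeta_n^b)^2/2\to0$. A second-order Taylor expansion of the log-likelihood ratio induced by the residual shift $-t\delta_n^b/\sqrt n$ produces a centered empirical-process contribution of order $\zeta_n^b$, a bias of order $\sqrt n\,\varepsilon_n^b\zeta_n^b=o(1)$ on the consistency set, and a quadratic error of order $(\zeta_n^b)^2$, all negligible. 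The shift-concentration hypothesis for each $t$ then allows one to align the effective integration domains in the numerator and denominator of \eqref{eq:prior_shift_cond2}.

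The main obstacle is controlling the Cameron--Martin linear term $(t/\sqrt n)U^{\xi_n^b}(\eta^b)$ on the posterior: under the prior it is a centered Gaussian with variance $\|\xi_n^b\|_{\H^b}^2/n\le(\zeta_n^b)^2\to 0$, but the posterior tilting through the likelihood introduces an additional mean related to the linear part of the score in the least favourable direction. This score-induced shift must cancel against the empirical-process contribution arising from the likelihood expansion, so that the ratio in \eqref{eq:prior_shift_cond2} converges to $1$ rather than to a nontrivial random constant. Once this cancellation is checked using the efficient information identity together with the RKHS approximation rate $\sqrt n\,\varepsilon_n^b\zeta_n^b\to 0$, the hypotheses of Theorem~\ref{thm:general_F} are verified and the semiparametric BvM follows.
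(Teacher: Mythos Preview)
Your overall strategy---reduce to Theorem~\ref{thm:general_F}, take $\mH_n$ to be $\mH_n^b$ intersected with an $L^2(F_0)$-ball of radius a multiple of $\varepsilon_n^b$, and verify \eqref{eq:prior_shift_cond2} by combining Cameron--Martin for the RKHS shift $\xi_n^b$ with a likelihood correction for the residual $\delta_n^b=\xi_{\eta_0}^b-\xi_n^b$---is exactly the paper's approach, and your first two paragraphs are correct.

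The third paragraph, however, misidentifies the mechanism and leaves a genuine gap. There is no ``score-induced shift'' in the posterior mean of $U_n:=U^{\xi_n^b}(\eta^b)$ that must cancel an empirical-process term; the paper never analyses the posterior law of $U_n$ directly. Instead it uses a \emph{localization} argument: define
\[
B_n=\bigl\{\eta^b:\ |U_n(\eta^b)|\le M\sqrt n\,\varepsilon_n^b\,\|\xi_n^b\|_{\H^b}\bigr\}\cap\mH_n^b,
\]
and show $\Pi(B_n\mid X^{(n)})\to^{P_0}1$. This follows not from any posterior computation but from the univariate Gaussian tail bound under the \emph{prior},
\[
\Pi\bigl(|U_n|>M\sqrt n\,\varepsilon_n^b\,\|\xi_n^b\|_{\H^b}\bigr)\le 2e^{-M^2n(\varepsilon_n^b)^2/2},
\]
combined with the prior-to-posterior transfer Lemma~\ref{lem:small_prior_prob}. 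On $B_n$ one then has $|tU_n(\eta^b)/\sqrt n|\le M|t|\,\varepsilon_n^b\,\|\xi_n^b\|_{\H^b}\le M|t|\sqrt n\,\varepsilon_n^b\,\zeta_n^b\to 0$, so the Cameron--Martin log-density is uniformly $o(1)$ on $B_n$ with no cancellation needed. The likelihood correction for $\delta_n^b$ is handled separately via Lemma~\ref{lem:lik_b_approx}, and after the change of variables \eqref{eq:prior_shift_cond2} reduces to the ratio $\Pi(B_{n,t}\mid X^{(n)})/\Pi(B_n\mid X^{(n)})$, both terms tending to $1$ by the shift-concentration hypothesis and the same localization argument on the translated set. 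Your observation that $(t/\sqrt n)U_n$ has prior variance at most $(\zeta_n^b)^2\to 0$ is correct but not by itself strong enough to transfer to the posterior; the key is that the threshold $M\sqrt n\,\varepsilon_n^b\|\xi_n^b\|_{\H^b}$ is chosen so the prior tail is exponentially small in $n(\varepsilon_n^b)^2$, matching the small-ball lower bound required by Lemma~\ref{lem:small_prior_prob}.
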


For the examples of the Riemann-Liouville process and finite Gaussian series prior the preceding proposition
implies the following.

\begin{corollary}\label{cor:RL_F}
Suppose $a_0\in C^\alpha([0,1]^d)$, $b_0 \in C^\beta([0,1]^d)$ and consider the prior \eqref{eq:prior_b} on $b$
with $W^b$ the random series \eqref{eq:series_prior} combined with an independent Dirichlet process prior on $F$. 
If $\alpha,\beta>d/2$ and $d/2<\bar{\beta}<\alpha+\beta-d/2$, then the posterior distribution
satisfies the semiparametric BvM theorem. 
Moreover, when $d=1$ the same result holds with $W^b$ the 
Riemann-Liouville process \eqref{eq:RL_process} with parameter $\bar{\beta}$.
\end{corollary}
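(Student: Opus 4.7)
The plan is to apply Proposition \ref{prop:gp_gen_F}, which reduces matters to exhibiting a contraction rate $\varepsilon_n^b$ satisfying \eqref{eq:gaus_conc_rate}, an RKHS shift $\xi_n^b$ approximating $\xi_{\eta_0}^b = a_0$ and verifying \eqref{eq:RKHS_cond_b}, and a sieve $\mathcal{H}_n^b$ on which the posterior concentrates and along which the empirical process of $b - b_0$ vanishes.

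\emph{Contraction rate.} For $\eta_0^b = \Psi^{-1}(b_0) \in C^\beta$, the standard concentration-function computation---balancing an RKHS approximation bias against the small-ball exponent---yields $\varepsilon_n^b \sim n^{-(\bar\beta \wedge \beta)/(2\bar\beta + d)}$ (up to logarithmic factors) both for the truncated series prior with level $2^{J_{\bar\beta}} \sim n^{1/(2\bar\beta + d)}$ and, in $d=1$, for the Riemann--Liouville process of regularity $\bar\beta$. The hypotheses $\beta,\bar\beta > d/2$ make this argument standard and ensure $\varepsilon_n^b = o(n^{-1/4})$.

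\emph{RKHS shift.} I would take $\xi_n^b$ to be the wavelet projection of $a_0 \in C^\alpha$ to level $J_{\bar\beta}$ in the series case, and an analogous Riemann--Liouville approximant otherwise. Using $|(a_0)_{jk}| \lesssim 2^{-j(\alpha + d/2)}$ one obtains $\|\xi_n^b - a_0\|_\infty \lesssim 2^{-J_{\bar\beta}\alpha} =: \zeta_n^b \sim n^{-\alpha/(2\bar\beta + d)}$ and, for $r = 0$,
\begin{equation*}
\|\xi_n^b\|_{\H^b}^2 \lesssim \sum_{j \le J_{\bar\beta}} 2^{j(d - 2\alpha)} = O(1)
\end{equation*}
whenever $\alpha > d/2$, so $\|\xi_n^b\|_{\H^b} \le \sqrt{n}\zeta_n^b$ holds trivially. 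The decisive condition $\sqrt{n}\varepsilon_n^b \zeta_n^b \to 0$ rearranges to $2\bar\beta + d < 2(\bar\beta \wedge \beta) + 2\alpha$, which under $\alpha > d/2$ is automatic when $\bar\beta \le \beta$ and becomes $\bar\beta < \alpha + \beta - d/2$ when $\bar\beta > \beta$---precisely the corollary's hypotheses.

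\emph{Sieve and Donsker condition.} The main obstacle is \eqref{eq:prob_sup2}, since the above estimates are largely algebraic. I would take
\begin{equation*}
\mathcal{H}_n^b = \Psi\bigl(M_n \H^b_1 + \varepsilon_n^b B_1\bigr),
\end{equation*}
where $\H^b_1$ and $B_1$ are the unit balls of the RKHS and of $C([0,1]^d)$, with $M_n \sim \sqrt{n}\varepsilon_n^b$. That $\Pi(\eta^b \in \mathcal{H}_n^b - t\xi_n^b/\sqrt{n} \mid X^{(n)}) \to 1$ is a standard Gaussian posterior contraction argument, the shift being absorbed by Cameron--Martin using the RKHS bound on $\|\xi_n^b\|_{\H^b}$. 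For the empirical-process uniformity, I would embed $\mathcal{H}_n^b$ into a fixed $P_0$-Donsker class: for the truncated series prior, the RKHS characterization gives a direct uniform bound on $\|b\|_{B^{\bar\beta}_{\infty\infty}}$, while for the Riemann--Liouville prior the Sobolev/Besov description of its RKHS yields the analogous embedding. The condition $\bar\beta > d/2$ makes such bounded H\"older/Besov balls $P_0$-Donsker, and combining with the $L^2(F_0)$-consistency following from \eqref{EqConsistencyb2} via Lemma~3.3.5 of \cite{vandervaart1996} delivers \eqref{eq:prob_sup2}, completing the application of Proposition~\ref{prop:gp_gen_F}.
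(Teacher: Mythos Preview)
Your treatment of the contraction rate and the RKHS shift is essentially the same as the paper's, and your simplification to $r=0$ is legitimate and makes the $\zeta_n^b$ computation cleaner. The final smoothness conditions you extract match exactly.

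The genuine gap is in the verification of \eqref{eq:prob_sup2}. Your claim that $\mathcal{H}_n^b = \Psi\bigl(M_n\H_1^b + \varepsilon_n^b B_1\bigr)$ embeds into a \emph{fixed} $P_0$-Donsker class is false. With $r=0$ the RKHS norm of the series prior is $\|w\|_{\H^b}^2=\sum_{j\le J}\sum_k 2^{jd}|w_{jk}|^2$, so $w\in M_n\H_1^b$ only gives $|w_{jk}|\le M_n 2^{-jd/2}$ and hence
\[
\|w\|_{B^s_{\infty\infty}}\lesssim M_n 2^{Js}\sim n^{(\bar\beta+d/2-\beta\wedge\bar\beta+s)/(2\bar\beta+d)},
\]
which diverges for every $s>0$. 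For the Riemann--Liouville prior the RKHS is $H^{\bar\beta+1/2}$ and the same obstruction occurs: $M_n\sim\sqrt n\varepsilon_n^b\to\infty$, so $M_n H_1^{\bar\beta+1/2}$ does not sit in any fixed H\"older or Sobolev ball of positive order. Thus the appeal to Lemma~3.3.5 of \cite{vandervaart1996} is not available.

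The paper handles \eqref{eq:prob_sup2} differently, via Lemmas~\ref{lem:sup_series} and~\ref{lem:sup_RL}: it bounds the $n$-dependent bracketing entropy of the sieve explicitly and applies the bracketing maximal inequality (Theorem~2.14.2 of \cite{vandervaart1996}) with a carefully chosen truncation level $\delta_n$ in the entropy integral. For the series prior this yields $\log N_{[]}(\mathcal F_n,L^2(P_0),\tau)\lesssim 2^{Jd}\log(n/\tau)$, and for the Riemann--Liouville prior the sieve is taken as $\gamma_n H_1^s + M\sqrt n\varepsilon_n^b H_1^{\bar\beta+1/2}$ with a \emph{shrinking} radius $\gamma_n$ on a lower-order Sobolev ball, so that the two entropy contributions can be balanced. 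In both cases the condition $\beta\wedge\bar\beta>d/2$ enters precisely through this entropy calculation, not through a Donsker embedding.
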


For $\alpha,\beta>d/2$, the parameter $\bar\beta$ can always be chosen to satisfy the remaining
condition in the corollary, in which case the BvM theorem holds. The values $\alpha, \beta>d/2$ are one particular pair satisfying
\eqref{eq:double_robust_smoothness}. However, when using product priors, it does not seem possible
to use extra smoothness in one parameter to offset low regularity in the other as in
\eqref{eq:double_robust_smoothness}. To remedy this we consider the propensity score-dependent prior \eqref{eq:prior_dep_EB}.

\begin{corollary}\label{cor:RL_dep_F}
Suppose $a_0\in C^\alpha([0,1]^d)$ and $b_0 \in C^\beta([0,1]^d)$. % $f_0\in C^\gamma$. 
Let $\hat{a}_n$ be an independent estimator satisfying $\|\hat{a}_n\|_\infty = O_{P_0}(1)$ and \eqref{eq:estimator_prop}
for some $\rho_n \rightarrow 0$. Consider the prior \eqref{eq:prior_dep_EB} for $b$, where $W^b$
is the random series \eqref{eq:series_prior} %where truncation parameter parameter $J_{\bar{\beta}}$ and
combined with  an independent Dirichlet process prior on $F$. 
If $\beta\wedge\bar{\beta} > d/2$ and
$$(n/\log n)^{-(\beta\wedge \bar{\beta})/(2\bar{\beta}+d)} \ll \sigma_n \lesssim 1, \qquad 
\sqrt{n}\rho_n (n/\log n)^{-(\beta \wedge\bar{\beta})/(2\bar{\beta}+d)}\rightarrow 0,$$ 
then the posterior distribution satisfies the semiparametric BvM.
Moreover, when $d=1$ the same result holds with $W^b$ the Riemann-Liouville process \eqref{eq:RL_process} with parameter $\bar{\beta}$.
\end{corollary}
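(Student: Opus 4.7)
The plan is to apply Theorem~\ref{thm:dep_prior_gen_EB} with $\varepsilon_n^b := (n/\log n)^{-(\beta\wedge\bar\beta)/(2\bar\beta+d)}$, the standard posterior contraction rate of the Gaussian process $W^b$ at a $\beta$-smooth truth $\eta_0^b = \Psi^{-1}(b_0)$. This follows from solving~\eqref{eq:gaus_conc_rate} with concentration function~\eqref{EqConcentrationFunction} computed via wavelet approximation for the series prior, and via the small-ball/RKHS results of \cite{vandervaart2008} for the Riemann--Liouville prior in $d=1$. The RKHS of the combined prior $W^b+\lambda\hat a_n$ contains $\H^b$, so the RKHS approximation of $\eta_0^b$ is unaffected, while the additional Gaussian $\lambda$-factor contributes only $\log(\sigma_n/\varepsilon_n^b)\lesssim \log n$ to the small-ball exponent, negligible compared to $n(\varepsilon_n^b)^2$. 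The hypotheses directly give $n\sigma_n^2\to\infty$ and $\sqrt n\rho_n\varepsilon_n^b\to 0$.

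For~\eqref{eq:dep_prior_lambda_cond}, the Gaussian tail $\Pi(|\lambda|>u_n\sigma_n^2\sqrt n)\leq 2\exp(-u_n^2\sigma_n^2 n/2)$ with $u_n:=\varepsilon_n^b/\sigma_n\to 0$ (valid since $\sigma_n\gg\varepsilon_n^b$) is exponentially smaller than the $e^{-cn(\varepsilon_n^b)^2}$ prior mass of a Kullback--Leibler neighbourhood of $p_{\eta_0}$, so Lemma~\ref{lem:small_prior_prob} applies. I would define
\[
\mH_n^b := \bigl\{w+\lambda\hat a_n:\|w\|_{C^s}\leq M_n,\ |\lambda|\leq u_n\sigma_n^2\sqrt n\bigr\}\cap\bigl\{\eta^b:\|\Psi(\eta^b)-b_0\|_{L^2(F_0)}\leq \varepsilon_n^b\bigr\},
\]
with $s\in(d/2,\beta\wedge\bar\beta)$, permitted because $\beta\wedge\bar\beta>d/2$, and $M_n\asymp\sqrt{\log n}$. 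The $C^s$-constraint on $w$ is retained with high posterior probability via Gaussian concentration of the wavelet coefficients (series prior, exploiting the freedom to take $r$ close to $\beta\wedge\bar\beta$ in the scaling $\sigma_j=2^{-j(r+d/2)}$) or sample-path H\"older regularity (Riemann--Liouville prior). Combined with posterior contraction of $\eta^b$ at rate $\varepsilon_n^b$, this yields~\eqref{EqPriorConcentration3} and~\eqref{EqConsistencyb3}; the extra shift $tn^{-1/2}\hat a_n$ is absorbed into the $\lambda$-parameter and has sup-norm $O(n^{-1/2})$ since $\|\hat a_n\|_\infty=O_{P_0}(1)$.

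The main obstacle is the Donsker condition~\eqref{eq:prob_sup3}. Parameterising $\mH_n^b$ through $(w,\lambda)$, the map $\lambda\mapsto\Psi(w+\lambda\hat a_n)$ is Lipschitz with constant $\|\Psi'\|_\infty\|\hat a_n\|_\infty=O_{P_0}(1)$, so the class $\Psi(\mH_n^b)$ has bracketing entropy
\[
\log N_{[]}(\delta,\Psi(\mH_n^b),L^2(F_0))\lesssim (M_n/\delta)^{d/s}+\log(u_n\sigma_n^2\sqrt n/\delta),
\]
the first term by the Birman--Solomjak bound for a $C^s$-ball and the second from the one-dimensional $\lambda$-parameter. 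Since $s>d/2$, the bracketing integral $\int_0^{\varepsilon_n^b}\sqrt{\log N_{[]}(\delta,\cdot,L^2(F_0))}\,d\delta$ converges and tends to zero as $\varepsilon_n^b\to 0$, with polynomial-in-$\log n$ factors absorbed. Combined with $L^2(F_0)$-diameter at most $L\varepsilon_n^b\to 0$ over $\Psi(\mH_n^b)$, a uniform-entropy maximal inequality \cite[Sec.~2.5]{vandervaart1996} gives $\sup_{b\in\Psi(\mH_n^b)}|\G_n[b-b_0]|\to^{P_0}0$. Assembling the four verifications with Theorem~\ref{thm:dep_prior_gen_EB} yields the BvM for both the series prior and the Riemann--Liouville prior in $d=1$.
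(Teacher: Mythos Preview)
Your overall plan---verify the hypotheses of Theorem~\ref{thm:dep_prior_gen_EB} using contraction at rate $\varepsilon_n^b$ and a Gaussian tail bound for~\eqref{eq:dep_prior_lambda_cond}---matches the paper. The difficulty is entirely in the choice of $\mH_n^b$ and the verification of~\eqref{eq:prob_sup3}, and here your argument has a genuine gap.

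The claim that $M_n\asymp\sqrt{\log n}$ suffices for $\Pi(\|w\|_{C^s}>M_n\mid X^{(n)})\to 0$ is incorrect. To transfer a prior tail bound to the posterior via Lemma~\ref{lem:small_prior_prob} you need $\Pi(\|W^b\|_{C^s}>M_n)\le e^{-Ln(\varepsilon_n^b)^2}$ for large $L$, and Gaussian concentration in the Banach space $C^s$ only gives $\Pi(\|W^b\|_{C^s}>E\|W^b\|_{C^s}+t)\le e^{-ct^2}$, forcing $M_n\gtrsim\sqrt n\,\varepsilon_n^b$, not a polylog. With this corrected radius the Birman--Solomjak bound yields a bracketing integral of order $(\sqrt n\,\varepsilon_n^b)^{d/(2s)}(\varepsilon_n^b)^{1-d/(2s)}=n^{d/(4s)}\varepsilon_n^b$, and for $s$ close to $\bar\beta$ (resp.\ $\beta\wedge\bar\beta$) this does \emph{not} tend to zero under the hypothesis $\beta\wedge\bar\beta>d/2$ alone; e.g.\ for the Riemann--Liouville prior with $d=1$ and $\bar\beta\le\beta$ it requires $4\bar\beta^2>2\bar\beta+1$, strictly stronger than $\bar\beta>1/2$. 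Moreover, for the series prior the parameter $r$ in $\sigma_j=2^{-j(r+d/2)}$ is part of the prior specification and may be $0$; you cannot ``take $r$ close to $\beta\wedge\bar\beta$'' after the fact, and for $r<s$ the $C^s$-norm of a prior draw is of order $2^{J(s-r)}\sqrt J$, not $\sqrt{\log n}$.

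The paper circumvents this by choosing sharper sets $\mH_n^b$: for the series prior it uses the RKHS ball $\{\|w\|_{\H^b}\le M\sqrt n\,\varepsilon_n^b\}$ and the finite-dimensionality of $V_J$ to get the entropy bound $\log N_{[]}\lesssim 2^{Jd}\log(n/\tau)$ (Lemma~\ref{lem:sup_series}), while for the Riemann--Liouville prior it uses the two-scale set $\gamma_n H_1^s+M\sqrt n\,\varepsilon_n^b H_1^{\bar\beta+1/2}$ with a \emph{shrinking} radius $\gamma_n\to 0$ on the low-regularity component (Lemma~\ref{lem:sup_RL}), which is exactly what makes the entropy integral vanish under the stated condition $\beta\wedge\bar\beta>d/2$. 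Your single fixed-$s$ H\"older ball is too coarse to recover this.
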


If $\bar{\beta}=\beta$ and $\rho_n = (\log n)^\kappa n^{-{\alpha}/{(2\alpha+1)}}$ is the minimax
rate of estimation, possibly up to a logarithmic factor, then the above conditions reduce to
$\beta>d/2$ and \eqref{eq:double_robust_smoothness}.  If $\beta$ is near the lower limit $d/2$, then
the latter condition requires that $\alpha$ be bigger than nearly $d/2$ as well, but if $\beta$ is large,
then the latter condition will be satisfied for $\alpha$ close to zero.  Thus the estimation method
is able to exploit extra smoothness in $b_0$ to offset lower regularity in $a_0$, in particular if
$0<\alpha\leq d/2$, unlike the standard product Gaussian process priors, where we required both
$\alpha,\beta>d/2$. Since it is still needed that $\beta>d/2$, the preceding corollary does not
give full ``double robustness'' in also taking advantage of extra regularity in $a_0$ if
$0<\beta \leq d/2$. The technical reason is requirement \eqref{eq:prob_sup2}, which is present in all
our theorems, and used in the proofs to establish the LAN expansion of the model. Whether this is a
fundamental limitation of the Bayesian approach or a purely technical artefact is unclear.

If $W^b$ is a mean-zero Gaussian process with covariance kernel $K_{W^b}(z,z')  = \E W_z^b W_{z'}^b$, then the term $W^b + \lambda \hat{a}_n$ in \eqref{eq:prior_dep_EB} is also a mean-zero Gaussian process with data-driven covariance
$$\E[W_z^b + \lambda \hat{a}_n(z)][W_{z'}^b + \lambda \hat{a}_n(z')] = K_{W^b}(z,z') + \sigma_n^2 \hat{a}_n(z) \hat{a}_n(z').$$
In this case, the propensity score-dependent prior corresponds to an easy to implement correction to the prior covariance function. In particular, one can use standard methods for Gaussian process posterior computation, such as Laplace or sparse approximations \cite{rasmussen2006}. In practice, we would also suggest to truncate the estimator $1/\hat{a}_n$ away from 0 for numerical stability. Computational and empirical aspects of this new prior are investigated in the continuous regression model in a companion paper \cite{RaySzabo}, where it is found that incorporating an estimator of the propensity score in this way significantly improves the performance of Gaussian process priors.

\section{Discussion}\label{sec:discussion}
A key technical difficulty for establishing semiparametric BvM results is controlling the ratio \eqref{eq:prior_shift_cond2} (or \eqref{eq:prior_shift_cond1}). While one can use the Cameron-Martin theorem for Gaussian priors, such
results are typically more involved outside the Gaussian setting. The hyper parameter $\lambda$ in
the prior \eqref{eq:prior_dep_EB} removes this obstacle, allowing results for a much wider class of priors. For
instance, one may select $W^b$ in \eqref{eq:prior_dep_EB} to be a truncated prior or sieve prior,
without having to establish \eqref{eq:prior_shift_cond2} directly for those priors.

Such a prior construction generalizes to other models and functionals. Consider a model
$\mathcal{P} = (P_\eta: \eta \in \mH)$ and a parameter $\chi(\eta)$. For a prior of the form
$\eta = W+\lambda \hat{\xi}_n$, where $W$ is a continuous stochastic process,
$\lambda \sim N(0,\sigma_n^2)$ and $\hat{\xi}_n$ is an estimate of the least favourable direction
$\xi_{\eta_0}$ of $\chi$ at $\eta_0$ in the model $\mathcal{P}$, similar results to the above should
hold. We emphasize, however, that such a prior is designed for semiparametric estimation of the
specific functional $\chi$ and will not perform any better for any other functional. It is
thus suitable for estimating a functional of interest in the presence of a high or
infinite-dimensional nuisance parameter that can have a significant impact, as in the model we study
here.

\section{Proofs of the main results}\label{sec:proofs}

\subsection{Proof of Theorem~\ref{thm:general_F}: General prior on $b$ and Dirichlet process prior}

\begin{proof}[Proof of Theorem~\ref{thm:general_F}]
The total variation distance between the posterior distributions based on the prior $\Pi$ and
the prior $\Pi_n (\cdot):= \Pi(\cdot\cap \mH_n)/\Pi(\mH_n)$, which is $\Pi$ conditioned to $\mH_n$, is bounded
above by $2\Pi(\mH_n^c\given X^{(n)})$ (e.g.\ page~142 of \cite{vandervaart1998}). Since this tends to
zero in probability by assumption and the total variation topology is stronger than the weak topology, 
it suffices to show the desired result for the conditioned prior $\Pi_n$ instead of $\Pi$. 

Let $\widehat\chi_n=\chi(\eta_0)+\P_n\widetilde\chi_{\eta_0}$, so that 
it satisfies \eqref{EqEffcieintEstimator} with the remainder term identically zero.
The posterior Laplace transform of the variable $\sqrt n(\chi(\eta)-\widehat\chi_n)$ is given by, for $t\in \R$, 
\begin{align*}
I_n(t) &= \E^{\Pi_n}[e^{t\sqrt{n} (\chi(\eta)-\widehat\chi_n)}\given X^{(n)}]\nonumber\\
&= \int\! \int_{\mH_n}\!\! \frac
{e^{t\sqrt{n}\int (bdF -  b_0 dF_0) -t \G_n [\widetilde{\chi}_{\eta_0}]+\ell_n^{b}(\eta)-\ell_n^{b}(\eta_t)}\, e^{\ell_n^{b}(\eta_t)} }
{\int_{\mH_n} e^{\ell_n^{b}(\eta')} d\Pi(\eta')}  d\Pi(\eta)]\, d\Pi(F|X^{(n)}),
\end{align*}
in view of \eqref{EqDirichletPosterior} and the factorization of the likelihood over $a$ and $b$. 
This is (obviously) true for any $\eta_t$, in particular for the path $\eta_t=\eta_t(\eta)$ defined in
\eqref{eq:LFD_F_no_den}. We shall show that $I_n(t)$ tends in probability to
$\exp (t^2 \| \xi_{\eta_0} \|_{\eta_0}^2/2)$, which is the Laplace transform of a
$N(0, \| \xi_{\eta_0} \|_{\eta_0}^2)$ distribution, for every $t$ in a neighbourhood of 0.  Since convergence of
conditional Laplace transforms in probability implies conditional convergence in distribution in probability (see
Lemma~\ref{LemmaLaplaceTransform} below), this would complete the proof.

At the end of the proof we shall show that, uniformly in $\eta\in\mH_n$,
\begin{align}
\ell_n^b(\eta) - \ell_n^b(\eta_t) 
&= t\G_n[\widetilde{\chi}_{\eta_0}^b] + t\sqrt{n} \int (b_0-b)\, dF_0 +\frac{t^2}{2} \| \xi_{\eta_0}^b \|_{b_0}^2+ o_{P_0}(1),
\label{eq:LAN_remainder_bb}
\end{align}
where $\widetilde\chi_{\eta}^b=B_\eta^ba$ is the component of the efficient influence function in the
$b$ direction (see \eqref{EqEIF}). Inserting this Taylor expansion in the preceding display, we see that 
\begin{align*}
I_n(t)  & = \int\!\int_{\mH_n} \frac
{e^{t\sqrt{n}\int (bdF - b_0 dF_0) +t\sqrt{n} \int (b_0-b)dF_0}\, e^{\ell_n^{b}(\eta_t)}}
{\int_{\mH_n} e^{\ell_n^{b}(\eta')} d\Pi(\eta')}\,   d\Pi(\eta) d\Pi(F\given X^{(n)})\\
&\qquad\qquad\qquad\qquad \times e^{-t \G_n [\widetilde{\chi}_{\eta_0}^f]+\frac{t^2}{2} \| \xi_{\eta_0}^b \|_{b_0}^2 + o_{P_0}(1)},
\end{align*}
where $\widetilde{\chi}_{\eta_0}^f= \widetilde{\chi}_{\eta_0}-\widetilde{\chi}_{\eta_0}^b = b_0 - \chi(\eta_0)$. Note that
the integral in the denominator is a constant relative to $\eta$ and $F$, since all variables are
integrated out. By Fubini's theorem, the double integral without the normalizing constant equals
\begin{align*}
\int_{\mH_n} e^{\ell_n^{b}(\eta_t)} \int e^{t\sqrt{n}\int bd(F-F_0)} \, d\Pi(F|X^{(n)})\, d\Pi(\eta).
\end{align*}
Let $\F_n = n^{-1} \sum_{i=1}^n \delta_{ Z_i}$ denote the empirical distribution of the covariates. 
%Since $f_0$ is uniformly bounded away from zero and $\{b=\Psi(\eta^b):(\eta^a,\eta^b)\in \mH_n\} \subset \{ b\in C([0,1]): b\geq 0, \|b-b_0\|_{L^2(F_0)} \leq \varepsilon_n^b\}$,
By assumption \eqref{eq:prob_sup2} we certainly have that
$\sup\{|(\F_n-F_0)b|: b=\Psi(\eta^b), \eta\in\mH_n\}$ tends to zero in probability.
Therefore Lemma~\ref{lem:DP_exp_laplace_trans} below yields that for every $t$ in a neighbourhood
of zero, the preceding display equals 
\begin{align*}
e^{o_{P_0}(1)} \int_{\mH_n} e^{\ell_n^{b}(\eta_t)} e^{t\sqrt{n}\int b d(\F_n-F_0)} e^{\frac{t^2}2\|b-F_0b\|_{L^2(F_0)}^2}\,d\Pi(\eta).
\end{align*}
Since $\|b-b_0\|_{L^2(F_0)} \rightarrow 0$ uniformly on $\mH_n$ 
and $\sqrt{n}\int b d(\F_n-F_0) = \G_n[b_0] + o_{P_0}(1)$ by assumption \eqref{eq:prob_sup2},
the previous display equals
\begin{align*}
e^{t\G_n[b_0] + \frac{t^2}{2}\|b_0-F_0b_0\|_{L^2(F_0)}^2 
+ o_{P_0}(1)}  \int_{\mH_n} e^{\ell_n^{b}(\eta_t)}\,  d\Pi(\eta).
\end{align*}
We insert this in the expression for $I_n(t)$, combine the two exponential terms
using that $\widetilde{\chi}_{\eta_0}^f=b_0 - \chi(\eta_0)$
and $\|b_0-F_0b_0\|_{L^2(F_0)} =\|\xi_{\eta_0}^f\|_{F_0}$, and invoke assumption
\eqref{eq:prior_shift_cond2}, to see that
$I_n(t)$ tends to $e^{t^2\|\xi_{\eta_0}\|_{\eta_0}^2/2}$ in probability. 
The theorem then follows by the convergence of Laplace transforms.

We conclude by a proof of \eqref{eq:LAN_remainder_bb}. This entails an expansion of the log likelihood
$\ell_n^b(\eta)-\ell_n^b(\eta_t)$ along the submodel $\eta_t$. 
We can decompose
\begin{equation}
\begin{split}
\ell_n^b(\eta) - \ell_n^b(\eta_t) 
&= t \G_n [\widetilde{\chi}_{\eta_0}^b] + \sqrt{n}\G_n [\log p_\eta - \log p_{\eta_t} 
- \frac{t}{\sqrt{n}} \widetilde{\chi}_{\eta_0}^b]\\
&\qquad\qquad\qquad + n P_{\eta_0} [\log p_\eta - \log p_{\eta_t}]. \label{eq:lan_least_fav_dir2}
\end{split}
\end{equation}
We shall show that the second term on the right tends to zero in probability, while
the third term tends to the quadratic $t^2\|a_0\|_{b_0}^2/2$, where  $a_0=\xi_{\eta_0}^b$. 

The definition $\eta_u := (\eta^a,\eta_u^b)$ with $\eta_u^b = \eta^b - tu\xi_{\eta_0}^b/\sqrt{n}$, 
for $u \in [0,1]$, gives a path from $\eta_{u=0} = \eta$ (not $\eta_0$!) to $\eta_{u=1}=\eta_t$,
so that $\log p_\eta - \log p_{\eta_t}=g(0)-g(1)$ for $g(u) = \log p_{\eta_u}$. We shall replace 
this difference in both terms on the right of \eqref{eq:lan_least_fav_dir2} 
by the Taylor expansion $g(0)-g(1)=-g'(0)-g''(0)/2-\theta$, where $|\theta|\le \|g'''\|_\infty$. The expansion
will be uniform in $\eta\in\mH_n$, although the dependence of $g$ and $\theta$ on $\eta$ is not indicated 
in the notation.

By explicit calculations the derivatives of $g$ can be seen to be 
\begin{align*}
g'(u) &= -\frac t{\sqrt n} B_{\eta_u}^b a_{0}=-\frac t{\sqrt n} r\bigl(y-\Psi(\eta_u^b)\bigr)a_0,\\
g''(u) &= -\frac{t^2}n r \Psi'(\eta_u^b) a_{0}^2,\qquad
g'''(u)=\frac{t^3}{n^{3/2}} r \Psi''(\eta_u^b) a_{0}^3,\end{align*}
where we have omitted the function arguments $(r,y,z)$.
Since $|\theta|\le \|g'''\|_\infty\lesssim n^{-3/2}$, it follows that both 
$\sqrt n\G_n\theta$ and $n P_0\theta$ tend to zero in probability, uniformly in $\eta\in \mH_n$. 
Since $B_{\eta_0}^b a_{0} = \widetilde{\chi}_{\eta_0}^b$,
\begin{align*} 
g'(0)&=-\frac t{\sqrt n} B_{\eta}^b a_{0}= -\frac t{\sqrt n} \widetilde{\chi}_{\eta_0}^b +\frac t{\sqrt n}r(b-b_0) a_{0},\\
g''(0)&=-\frac{t^2}n r \Psi'(\eta^b) a_{0}^2=-\frac{t^2}n r \Psi'(\eta_0^b) a_{0}^2
-\frac{t^2}n r \bigl(b(1-b)-b_0(1-b_0)\bigr) a_{0}^2
\end{align*}
for $b = \Psi(\eta^b)$, since $\Psi' = \Psi(1-\Psi)$. 

By assumption \eqref{eq:prob_sup2} and Lemma~\ref{LemmaGCnPreservation}, applied with $\mH_{n,1}$ the set of functions
$\sqrt n(b-b_0)$ and $\mH_{n,2}=\{r\}$, 
we have that $\G_n \bigl[r(b-b_0) a_{0}\bigr]\ra 0$ in probability, uniformly in $\{ b=\Psi(\eta^b): \eta\in \mH_n\}$, whence
$\sqrt n \G_n g'(0)=-t\G_n\bigl[ \widetilde{\chi}_{\eta_0}^b ]+o_{P_0}(1)$,
uniformly in $\eta\in \mH_n$. By again assumption \eqref{eq:prob_sup2} and Lemma~\ref{LemmaGCnPreservation},
$\G_n \bigl[r \bigl(b(1-b)-b_0(1-b_0)\bigr) a_{0}^2\bigr]\ra 0$ in probability,
whence $\sqrt n\G_ng''(0)=O_{P_0}(n^{-1/2})\ra 0$ in probability.
We conclude that the second term on the right in \eqref{eq:lan_least_fav_dir2} tends to zero
in probability, uniformly in $\eta\in \mH_n$.

Since $\Psi'(\eta_0^b) = b_0(1-b_0)$ and $ \int b_0(1-b_0) a_0\, dF_0=\|\xi_{\eta_0}^b\|_{b_0}^2$,
\begin{align*}
-nP_{\eta_0} g'(0)&=t\sqrt{n} \int (b_0-b)\,dF_0,\\
-nP_{\eta_0} g''(0)-t^2\| \xi_{\eta_0}^b \|_{b_0}^2 
&= t^2P_{\eta_0}[r\big(b (1-b)- b_0(1-b_0)\bigr) a_0^2]\\
&\lesssim  P_{\eta_0}[r|b-b_0|a_0^2]\le \|b-b_0\|_{L^1(F_0)}\|a_0\|_\infty.
\end{align*}
Therefore  $nP_{\eta_0}[-g'(0)-g''(0)/2]$ is equal to 
$ t\sqrt{n} \int (b_0-b)\,dF_0 + t^2\| \xi_{\eta_0}^b \|_{b_0}^2/2 +o_{P_0}(1)$.
The third term on the right of \eqref{eq:lan_least_fav_dir2}
is equivalent to the same expression.
This concludes the proof of \eqref{eq:LAN_remainder_bb}.
\end{proof}

The preceding proof makes use of the following lemma, which can be considered
a BvM theorem for the Laplace transform of the Dirichlet posterior process.
A proof of the lemma can be found in \cite{RayDP}.

Let $\F_n$ be the empirical distribution of an i.i.d.\ sample $Z_1,\ldots,Z_n$ from a distribution
$F_0$ on a Polish sample space $(\mZ,\mC)$, and given $Z_1,\ldots, Z_n$ let $F_n$ be the distribution of
a draw from the Dirichlet process with base measure $\nu+ n\F_n$. Thus $\nu$ is a finite measure on
$(\mZ,\mC)$, and $F_n\given Z_1,\dots,Z_n \sim DP(\nu + n\F_n)$ is the 
posterior distribution obtained when equipping the distribution of the observations
$Z_1,Z_2,\ldots, Z_n$ with a Dirichlet process prior with base measure $\nu$.
The case $\nu=0$ is allowed.

\begin{lemma}\label{lem:DP_exp_laplace_trans}
Suppose $\mG_n$ are separable classes of measurable functions such that
$\sup_{g\in \mG_n}|\F_ng-F_0g|\rightarrow 0$ in probability and have envelope functions $G_n$ satisfying
$\nu G_n=O(1)$ and $F_0G_n^{2+\delta}=O(1)$ for some $\delta>0$.
Then for every $t$ in a sufficiently small neighbourhood of $0$, in probability,
\begin{align*}
\sup_{g\in \mG_n}\left| \E\bigl[e^{t\sqrt{n}(F_ng-\F_ng)} \given Z_1,\ldots,Z_n\bigr] - e^{t^2 F_0(g-F_0g)^2/2} \right|
 \rightarrow 0.
\end{align*}
\end{lemma}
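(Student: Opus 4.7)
My plan is to exploit the Gamma-process representation of the Dirichlet posterior, which decouples the random Dirichlet weights into independent exponentials, and then to compute the conditional Laplace transform directly via a Taylor expansion of the moment generating function of $\mathrm{Exp}(1)$. Concretely, I would represent the posterior draw as $F_n=\Gamma/\Gamma(\mZ)$, where $\Gamma=\Gamma_\nu+\sum_{i=1}^n\xi_i\delta_{Z_i}$, with $\Gamma_\nu$ an independent Gamma random measure of intensity $\nu$ and $\xi_i\stackrel{\mathrm{iid}}{\sim}\mathrm{Exp}(1)$. Setting $D=\Gamma_\nu(\mZ)+\sum_{i=1}^n\xi_i$, the identity $\sum_{i=1}^n(g(Z_i)-\F_n g)=0$ yields the algebraic decomposition
\[
\sqrt n(F_n g-\F_n g)=\frac{\sqrt n}{D}\sum_{i=1}^n(\xi_i-1)\bigl(g(Z_i)-\F_n g\bigr)+\frac{\sqrt n}{D}\bigl(\Gamma_\nu g-\Gamma_\nu(\mZ)\F_n g\bigr).
\]
Under $\nu G_n=O(1)$ the second ``$\nu$-piece'' is $O_{P_0}(n^{-1/2})$ uniformly in $g\in\mG_n$ and drops out, while the multiplicative factor $n/D=1+O_{P_0}(n^{-1/2})$ does not depend on $g$ and can be absorbed in a final uniform-integrability step.

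Given $Z^{(n)}$, the centered weights $\tilde\xi_i:=\xi_i-1$ are i.i.d.\ with moment generating function $M(s)=e^{-s}/(1-s)$ valid for $s<1$, so with $h_i(g):=g(Z_i)-\F_n g$ the conditional Laplace transform factorises as
\[
\log\E\Bigl[\exp\Bigl(\tfrac{t}{\sqrt n}\textstyle\sum_{i=1}^n\tilde\xi_i h_i(g)\Bigr)\Bigm|Z^{(n)}\Bigr]=\sum_{i=1}^n\log M\!\Bigl(\tfrac{t h_i(g)}{\sqrt n}\Bigr),
\]
and the expansion $\log M(s)=s^2/2+O(s^3)$ on $|s|\le 1/2$ converts this into $\tfrac{t^2}{2}\F_n(g-\F_n g)^2+R_n(g,t)$ with $|R_n(g,t)|\lesssim |t|^3n^{-1/2}\F_n|g-\F_n g|^3$. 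For $|t|$ in a sufficiently small neighbourhood of $0$, the constraint $|t h_i(g)/\sqrt n|\le 1/2$ is enforced by pre-truncating $g=g\mathbf{1}_{\{|g|\le M_n\}}+g\mathbf{1}_{\{|g|>M_n\}}$ with $M_n=o(\sqrt n)$; the envelope bound $F_0 G_n^{2+\delta}=O(1)$ makes the contribution of the discarded mass to every term negligible.

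What remains is the uniformity over $g\in\mG_n$ of $\F_n(g-\F_n g)^2\to^{P_0}F_0(g-F_0 g)^2$ and $n^{-1/2}\F_n|g-\F_n g|^3\to^{P_0}0$. For the first I would upgrade the hypothesis $\sup_{\mG_n}|\F_n g-F_0 g|\to^{P_0}0$ to its analogue for $g\mapsto g^2$ using a standard Glivenko--Cantelli-preservation argument together with the integrability $F_0 G_n^{2+\delta}=O(1)$. For the second I would split again at level $M_n$: the bulk contributes $\le M_n\F_n(g-\F_n g)^2=O_{P_0}(M_n)$, and the tail is controlled by $M_n^{-\delta}F_0 G_n^{2+\delta}$, so choosing $M_n$ with $M_n=o(\sqrt n)$ and $M_n^{\delta}\to\infty$ makes both pieces tend to $0$.

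The hard part is exactly this uniform upgrade of a first-moment hypothesis to second- and third-moment statements over $\mG_n$: the exponential calculation naturally produces second and third absolute moments of $g$, and the only integrability input is the envelope bound $F_0 G_n^{2+\delta}=O(1)$, so the truncation level $M_n$ must be chosen to balance the bulk and tail errors simultaneously while keeping $|t h_i(g)/\sqrt n|<1$ inside the MGF. Once this is in place, the $g$-independent factor $n/D-1=O_{P_0}(n^{-1/2})$ is handled by a direct exponential-moment bound from the same MGF calculation, showing that $e^{t(n/D-1)U_n(g)/\sqrt n}=1+o_{P_0}(1)$ uniformly in $g\in\mG_n$, which completes the convergence of the conditional Laplace transform.
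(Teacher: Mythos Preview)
The paper does not give a proof of this lemma; it states that ``A proof of the lemma can be found in \cite{RayDP}'' and moves on. So there is nothing in the paper to compare your argument against, and the proposal has to be judged on its own merits.

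Your overall strategy is the standard and correct one for Bayesian-bootstrap-type Laplace limits: the Gamma representation $F_n=\Gamma/\Gamma(\mZ)$ with $\xi_i\sim\mathrm{Exp}(1)$, the centring identity $\sum_i h_i(g)=0$, the factorisation of the conditional MGF over the $\tilde\xi_i$, the expansion $\log M(s)=s^2/2+O(s^3)$, and the truncation at a level $M_n$ with $n^{1/(2(1+\delta))}\ll M_n\ll\sqrt n$ are all right. For the ``uniform upgrade'' from $\sup_{\mG_n}|\F_ng-F_0g|\to0$ to the analogue for $g^2$, note that the paper's own Lemma~\ref{LemmaGCnPreservation} (Glivenko--Cantelli preservation under continuous $\phi$ with uniformly integrable envelopes) does exactly this job with $\phi(u)=u^2$, since $F_0G_n^{2+\delta}=O(1)$ makes $G_n^2$ uniformly integrable; you should invoke it rather than leave this as ``standard''.

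The step that is genuinely under-argued is the normalising factor $n/D$. Writing $e^{t(n/D-1)U_n}=1+o_{P_0}(1)$ in probability is not enough: you need to pass from $\E[e^{t(n/D)U_n}\mid Z^{(n)}]$ to $\E[e^{tU_n}\mid Z^{(n)}]$, and $n/D-1$ is \emph{correlated} with $U_n$ through the $\xi_i$, so the factor cannot simply be pulled out. A H\"older split $\E[e^{tU_n}(e^{t(n/D-1)U_n}-1)\mid Z^{(n)}]\le(\E[e^{ptU_n}\mid Z^{(n)}])^{1/p}\,\|e^{t(n/D-1)U_n}-1\|_{L^q}$ works in principle, but the $L^q$ control requires an honest exponential-moment bound on the product $(n/D-1)U_n$, and crude envelope bounds (replacing $U_n$ by $2M_nn^{-1/2}\sum_i|\tilde\xi_i|$) blow up. One clean route is to exploit the Gamma--Dirichlet independence: conditionally on $Z^{(n)}$, the total mass $D=\Gamma(\mZ)$ is independent of $F_n=\Gamma/\Gamma(\mZ)$, so the normaliser decouples from the process you care about, and this can be used to obtain the needed a~priori $L^{1+\epsilon}$ bound on $e^{t\sqrt n(F_ng-\F_ng)}$. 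Whichever device you use, this step has to be written out carefully; as stated the plan asserts the conclusion without supplying the mechanism.
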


\subsection{Proof of Theorem~\ref{thm:dep_prior_gen_EB}: propensity score-dependent prior}

\begin{proof}[Proof of Theorem~\ref{thm:dep_prior_gen_EB}]
For the propensity score-dependent prior \eqref{eq:prior_dep_EB} 
the posterior distribution for $\sqrt n(\chi(\eta)-\hat\chi_n)$ is dependent both on 
the data $X^{(n)}$ and the estimator $\hat a_n$, and hence the bounded Lipschitz 
distance between this posterior distribution and the approximating normal distribution
in Definition~\ref{DefBvM} is a function $H(X^{(n)},\hat a_n)$ of this pair of stochastic variables. 
By the assumed stochastic independence of $X^{(n)}$ and $\hat a_n$, the expectation of this 
distance can be disintegrated as
$\E H(X^{(n)},\hat a_n)=\int \E H(X^{(n)},a)\,dP^{\hat a_n}(a)$, where
the expectation inside the integral is relative to $X^{(n)}$ only and
concerns the ``ordinary'' posterior distribution relative to 
the prior \eqref{eq:prior_dep_EB} with $\hat a_n$ set equal to the deterministic function $a$,
i.e.\ the posterior distribution for the prior of the form $\Psi(w+\lambda a)$ on $b$, for a fixed
function $a$ and $(w,\lambda)$ following their prior. Since the bounded Lipschitz distance
is bounded, $\E H(X^{(n)},\hat a_n)$ certainly tends to zero if for every $\eta>0$ there exist sets $\mA_n$ with
$\Pr(\hat a_n\in \mA_n)>1-\eta$ such that $\E H(X^{(n)},a)\rightarrow 0$, uniformly in $a\in \mA_n$.

In view of \eqref{EqPriorConcentration3} there exist sets $A_n$ with $\Pr(\hat a_n\in A_n)\rightarrow 1$
and $\E\Pi \bigl( (w,\lambda): w+(\lambda + tn^{-1/2})a\in \mH_n^b\given X^{(n)}\bigr)\rightarrow1$, uniformly in
$a\in A_n$ (see the lemma below for details). Since we assume that $\|\hat{a}_n\|_\infty = O_{P_0}(1)$ and \eqref{eq:estimator_prop}, 
we can further reduce these sets to $\mA_n=\{a\in A_n: \|a\|_\infty\le M, \|a-a_0\|_{L^2(F_0)} \le M\rho_n\}$, 
and then show that $\E H(X^{(n)},a)\rightarrow 0$ uniformly in $a\in \mA_n$, for (every) fixed $M>0$. 
Thus in the remainder of the proof we fix $\hat a_n$ to be a deterministic sequence $a_n$ in $\mA_n$.

We verify the conditions of Theorem~\ref{thm:general_F}. By \eqref{eq:dep_prior_lambda_cond}--\eqref{eq:prob_sup3},
conditions \eqref{EqPriorConcentration2}--\eqref{eq:prob_sup2}
are met by $\mH_n=\{\eta: \eta^b=w+\lambda a_n, (w,\lambda)\in B_n\}$, for 
\begin{align*}
B_n = \bigl\{ (w,\lambda) :  w +\lambda {a}_n \in \mH_n^b , |\lambda| \leq 2 u_n\sigma_n^2 \sqrt{n}\bigr\}.
\end{align*}
It therefore remains only to control the change of measure \eqref{eq:prior_shift_cond2}.
We need only consider the $b$ part of the integrals, as the $a$ part cancels. 
Because the assumptions become `more true' if $u_n$ is replaced 
by a bigger sequence and $n\sigma_n^2\rightarrow\infty$, 
we may assume that $u_n\rightarrow0$ and $u_nn\sigma_n^2\rightarrow\infty$. 

For the $b$ term, \eqref{eq:prior_shift_cond2} equals
\begin{align}\label{eq:numerator}
\frac{\int_{B_n}  e^{\ell_n^b(w+\lambda {a}_n-ta_0/\sqrt{n})} \phi_{\sigma_n}(\lambda)\,  d\lambda \, d\Pi(w)}
{\int_{B_n}  e^{\ell_n^b(w+\lambda {a_n})} \phi_{\sigma_n}(\lambda)\, d\lambda \, d\Pi(w)},
\end{align}
where $\phi_\sigma$ denotes the probability density function of a $N(0,\sigma^2)$ random
variable. By Lemma~\ref{lem:lik_b_approx}, applied with
$A_n = \{ w+\lambda {a}_n : (w,\lambda) \in B_n\}$, $\xi_n = {a}_n$,
$\xi_0 =  a_0$, $\zeta_n = M\rho_n$, $w_n$ the constant $M$ in the definition of $\mA_n$ 
and $\varepsilon_n = \varepsilon_n^b$, 
\begin{align*}
\sup_{(w,\lambda)\in B_n}\Bigl|\ell_n^b\bigl(w + \lambda {a}_n-\frac t{\sqrt{n}}a_0\bigr)
-\ell_n^b\bigl(w + \bigl(\lambda-\frac t{\sqrt{n}}\bigr){a}_n\bigr)\Bigr| = o_{P_0}(1).
\end{align*}
Furthermore, for $|\lambda|\leq 2u_n\sigma_n^2\sqrt{n}$, we have for the log likelihood ratio of two
normal densities
$$\Bigl|\log \frac{\phi_{\sigma_n}(\lambda)}{\phi_{\sigma_n}(\lambda - t/\sqrt{n})}\Bigr|
\le \frac{|t\lambda|}{\sqrt{n}\sigma_n^2} + \frac{t^2}{2n\sigma_n^2} \rightarrow 0.$$
%\begin{align*}
%\Bigl| \log \frac{\phi_{\sigma_n}(\lambda)}{\phi_{\sigma_n}(\lambda - t/\sqrt{n})} \Bigr| 
%\leq \frac{|t\lambda|}{\sqrt{n}\sigma_n^2} + \frac{t^2}{2n\sigma_n^2} \rightarrow 0.
%\end{align*}
Consequently, the numerator of \eqref{eq:numerator} equals
\begin{align*}
e^{o_{P_0}(1)} \int_{B_n}  e^{\ell_n^b(w+(\lambda -t/\sqrt{n}) {a}_n)} 
\phi_{\sigma_n}(\lambda-t/\sqrt{n})\,d\lambda\, d\Pi(w).
%= e^{o_{P_0}(1)} \int_{B_{n,t}}  e^{\ell_n^b(w+\lambda' {a}_n)} \phi_{\sigma_n}(\lambda')\, d\lambda'\, d\Pi(w),
\end{align*}
By the change of variables $\lambda -t/\sqrt{n} \rightsquigarrow \lambda'$ the ratio
\eqref{eq:numerator} therefore equals, for
$B_{n,t}=\{ (w,\lambda): (w,\lambda+t/\sqrt{n})\in B_n\}$,
\begin{align*}
e^{o_{P_0}(1)} \frac{\int_{B_{n,t}}  e^{\ell_n^b(w+\lambda' {a}_n)} \phi_{\sigma_n}(\lambda')\, d\lambda'\, d\Pi(w)}
{\int_{B_n}  e^{\ell_n^b(w+\lambda {a}_n)} \phi_{\sigma_n}(\lambda)\, d\lambda \, d\Pi(w)} 
= e^{o_{P_0}(1)}\frac{\Pi(B_{n,t}|X^{(n)})}{\Pi(B_n|X^{(n)})}. 
\end{align*}
% = \{ (w,\lambda'):w +(\lambda'+t/\sqrt{n}){a}_n \in \mH_n^b, |\lambda'+t/\sqrt{n}| \leq u_n \sigma_n^2 \sqrt{n}\}.$$
Since $\Pi(B_n|X^{(n)})=1-o_{P_0}(1)$, it remains to show that $\Pi(B_{n,t}|X^{(n)})=1-o_{P_0}(1)$. 

The set $B_{n,t}$ is the intersection of the sets in assumptions 
\eqref{EqPriorConcentration3} (with $\hat a_n=a_n$) and \eqref{eq:dep_prior_lambda_cond}, except that
the restriction on $\lambda$ in $B_{n,t}$ is $|\lambda+t/\sqrt{n}| \le 2u_n \sqrt{n}\sigma_n^2$,
whereas in \eqref{eq:dep_prior_lambda_cond} the restriction is $|\lambda| \le u_n \sqrt{n}\sigma_n^2$.
Since $t/\sqrt n\ll u_n \sqrt{n}\sigma_n^2$ by construction,
the latter restriction implies the former, and hence $\Pi(B_{n,t}|X^{(n)})=1-o_{P_0}(1)$ by assumption.
%, the right-hand side of the last posterior probability equals $u_n \sqrt{n}\sigma_n^2 (1-o(1))$. This probability is then $o_{P_0}(1)$ by \eqref{eq:dep_prior_lambda_cond}, so that $\Pi(B_{n,t}|X^{(n)}) = 1-o_{P_0}(1)$, which completes the proof.
% For the second constraint
%imposed by $B_n$ on the points $(w,\lambda')$, we note that
%\begin{align*}
%\Pi(\lambda':|\lambda'+t/\sqrt{n}| > u_n \sqrt{n}\sigma_n^2|X^{n}) \leq \Pi(\lambda':|\lambda'| > u_n \sqrt{n}\sigma_n^2 (1-|t|/(u_n n\sigma_n^2))|X^{n})
%\end{align*}
%Since $n\sigma_n^2\rightarrow \infty$, taking $u_n \rightarrow 0$ slowly enough that $u_n n\sigma_n^2 \rightarrow \infty$, the right-hand side of the last posterior probability equals $u_n \sqrt{n}\sigma_n^2 (1-o(1))$. This probability is then $o_{P_0}(1)$ by \eqref{eq:dep_prior_lambda_cond}, so that $\Pi(B_{n,t}|X^{(n)}) = 1-o_{P_0}(1)$, which completes the proof.
\end{proof}

\begin{lemma}
For given $v$ define $A_n(v)$ to be the set of all $a$ such that 
$\E\Pi \bigl( (w,\lambda): w+(\lambda + tn^{-1/2})a\in \mH_n^b\given X^{(n)}\bigr)>1-v$.
If \eqref{EqPriorConcentration3} holds, then there exists $v_n\downarrow 0$ such that $\Pr(\hat a_n\in A_n(v_n))\rightarrow 1$. 
\end{lemma}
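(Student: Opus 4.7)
The plan is to exploit the independence between $\hat a_n$ and $X^{(n)}$ to transfer the in-probability statement of assumption \eqref{EqPriorConcentration3}, which is about the random perturbation $\hat a_n$, into the desired unconditional statement about the deterministic set $A_n(v)$.

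First I would isolate the deterministic function of $a$ that sits inside the definition of $A_n(v)$. Writing
\[
g_n(a) := \E\,\Pi\bigl((w,\lambda): w+(\lambda + tn^{-1/2})a \in \mH_n^b \bigm| X^{(n)}\bigr),
\]
where the outer expectation is over $X^{(n)}$ only, one has $A_n(v) = \{a : g_n(a) > 1-v\}$. By the stated independence of $\hat a_n$ and $X^{(n)}$ and Fubini's theorem, evaluating at the random point $\hat a_n$ gives
\[
\E\,g_n(\hat a_n) = \E\,\Pi\bigl((w,\lambda): w+(\lambda + tn^{-1/2})\hat a_n \in \mH_n^b \bigm| X^{(n)}\bigr).
\]
Assumption \eqref{EqPriorConcentration3} says the quantity on the right converges to $1$ in $P_0$-probability; since it is bounded in $[0,1]$, dominated convergence upgrades this to convergence in $L^1$, so that $\E\,g_n(\hat a_n) \to 1$.

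Now set $\epsilon_n := 1 - \E\,g_n(\hat a_n)$, so $\epsilon_n \downarrow 0$ (possibly after passing to a monotone majorant). Since $1 - g_n(\hat a_n) \ge 0$, Markov's inequality gives, for any $v>0$,
\[
\Pr\bigl(1 - g_n(\hat a_n) \ge v\bigr) \le \epsilon_n/v.
\]
Choosing $v_n = \sqrt{\epsilon_n}$ (or any sequence with $v_n \to 0$ and $v_n \gg \epsilon_n$) yields $\Pr(\hat a_n \in A_n(v_n)) = \Pr(g_n(\hat a_n) > 1 - v_n) \to 1$. To obtain strict monotonicity, replace $v_n$ by $\tilde v_n := \sup_{k\ge n} v_k$, which is monotone nonincreasing with $\tilde v_n \downarrow 0$; because $\tilde v_n \ge v_n$, the same probability bound persists.

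There is no substantive obstacle here; the entire content is packaging independence plus Fubini plus dominated convergence plus Markov. The only mild subtlety is being careful that $g_n$ is a deterministic function of $a$ so that $g_n(\hat a_n)$ may legitimately be regarded as a random variable whose distribution is governed solely by $\hat a_n$, which is exactly where the independence of $\hat a_n$ from $X^{(n)}$ is used.
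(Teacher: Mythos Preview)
The proposal is correct and follows essentially the same approach as the paper: define the deterministic function $g_n(a)$ (the paper's $H_n(a)$), use independence and Fubini together with dominated convergence to conclude $\E g_n(\hat a_n)\to 1$, and then extract $v_n\downarrow 0$ from the fact that a $[0,1]$-valued sequence whose mean tends to $1$ converges to $1$ in probability. Your version merely spells out explicitly, via Markov's inequality and the choice $v_n=\sqrt{\epsilon_n}$, what the paper calls ``a standard argument''.
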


\begin{proof}
For given $a$ and $x$, define $$G_n(a,x)=\Pi \bigl( (w,\lambda): w+(\lambda + tn^{-1/2})a\in \mH_n^b\given X^{(n)}=x\bigr).$$
Then the given expectation is $H_n(a):=\E G_n(a, X^{(n)})$ and $A_n(v)=\{a: H_n(a)>1-v\}$.
By \eqref{EqPriorConcentration3}, the dominated convergence theorem and the independence of $\hat a_n$ and $X^{(n)}$,
we have  $\E H_n(\hat a_n)=\E G_n(\hat a_n,X^{(n)})\ra 1$.
Since  $0\le H_n(a)\le 1$, this implies that $H_n(\hat a_n)\rightarrow^{P}1$.
Then $\Pr(H_n(\hat a_n)>1-v_n)\ra1$, for $v_n\downarrow 0$ sufficiently slowly by a standard argument.
\end{proof}

\subsection{Proofs for Section~\ref{SectionGaussianPriors}: Gaussian process priors}

\begin{proof}[Proof of Proposition~\ref{prop:gp_gen_F}]
We verify the conditions of Theorem~\ref{thm:general_F}. By Lemma~\ref{lem:contrac_prod_b} 
with norm $\|\cdot\|_\infty$, the posterior distribution of $b$ contracts about $b_0$ at rate $\varepsilon_n^b$ in
$L^2(F_0)$. For $\mH_n^b$ the sets as in the statement of the proposition, define
$$\mH_n = \bigl\{(\eta^a,\eta^b): \eta^b\in\mH_n^b, \|\Psi(\eta^b)-b_0\|_{L^2} \leq \varepsilon_n^b\bigr\}.$$
Then $\Pi(\mH_n\given X^{(n)}) \rightarrow^{P_0} 1$ as $n\rightarrow \infty$, by assumption.
It follows that $\mH_n$ satisfies conditions \eqref{EqPriorConcentration2}--\eqref{EqConsistencyb2},
while \eqref{eq:prob_sup2} is satisfied by assumption.

It remains to verify \eqref{eq:prior_shift_cond2}.
Following \cite{castillo2012}, we first approximate the perturbation $\eta_t^b$ by an element in the
RKHS $\H^b$ and then apply the Cameron-Martin theorem. Let $\xi_n^b\in \H^b$ satisfy
\eqref{eq:RKHS_cond_b}, and set $\eta_{n,t}= \eta_{n,t}(\eta^b)= \eta^b - t\xi_n^b/\sqrt{n}$. 
By the Cameron-Martin theorem (see Lemma~\ref{LemCM}), the distribution $\Pi_{n,t}$
of $\eta_{n,t}$ if $\eta^b$ is distributed according to the prior $\Pi$ has Radon-Nikodym density
$$\frac{d\Pi_{n,t}}{d\Pi}(\eta^b)=e^{tU_n(\eta^b)/\sqrt{n} - t^2 \|\xi_n^b\|_{\H^b}^2/(2n)},$$
where $U_n(\eta^b)$ is a centered Gaussian variable with variance $\|\xi_n^b\|_{\H^b}^2$
if $\eta^b\sim \Pi$, and $\|\cdot\|_{\H^b}$ is the RKHS norm of the Gaussian process $\eta^b$.
By the univariate Gaussian tail bound,
\begin{equation}
\label{EqUnivariateGaussian}
\Pi\bigl( \eta^b: |U_n(\eta^b)| > M\sqrt{n} \varepsilon_n^b \| \xi_n^b\|_{\H^b}\bigr) \leq
2e^{-M^2n(\varepsilon_n^b)^2/2}.
\end{equation}
Consequently, by Lemma~\ref{lem:small_prior_prob} the posterior measure of 
the set in the display tends to 0 in probability, for large enough $M$. Hence the sets
\begin{align*}
B_n = \bigl\{\eta^b: |U_n({\eta^b})| \leq M\sqrt{n} \varepsilon_n^b \| \xi_n^b\|_{\H^b}\bigr\}\cap \mH_n^b
\end{align*}
also satisfy $\Pi(B_n|X^{(n)}) \ra 1$ in probability.  
On the sets $B_n$, in view of \eqref{eq:RKHS_cond_b},
\begin{align}\label{eq:CM}
\Bigl|\log \frac{d\Pi_{n,t}}{d\Pi}(\eta^b)\Bigr|\le { M|t|\sqrt{n}\varepsilon_n^b \zeta_n^b + \frac{t^2}{2}(\zeta_n^b)^2}\ra 0.
\end{align}
Furthermore, by Lemma~\ref{lem:lik_b_approx} applied with $A_n=B_n$, $\xi_0 = \xi_{\eta_0}^b$,
$\varepsilon_n=\varepsilon_n^b$, $\zeta_n = \zeta_n^b$ and $w_n$ a sufficiently large fixed
constant, we have 
$$\sup_{\eta^b\in B_n}|\ell_n^b(\eta_{n,t})-\ell_n^b(\eta_t^b)| = o_{P_0}(1).$$ 
(Note that condition \eqref{eq:sup_lem_lik_b_approx} holds by assumption \eqref{eq:prob_sup3}
and Lemma~\ref{lem:prob_to_expect}.) By the last display followed by the change of integration variable
$\eta^b-t\xi_n^b/\sqrt n\rightsquigarrow v$,
\begin{align*}
\frac{\int_{B_n} e^{\ell_n^b(\eta_{t}^b)} d\Pi(\eta^b)}{\int_{B_n} e^{\ell_n^b(\eta^b)} d\Pi(\eta^b)} 
=\frac{\int_{B_n} e^{\ell_n^b(\eta_{n,t})} d\Pi(\eta^b)}{\int_{B_n} e^{\ell_n^b(\eta^b)} d\Pi(\eta^b)} \,e^{o_{P_0}(1)}
=\frac{\int_{B_{n,t}} e^{\ell_n^b (v)} \, d\Pi_{n,t}(v) }{\int_{B_n} e^{\ell_n^b(\eta^b)} d\Pi(\eta^b)}\,e^{o_{P_0}(1)},
\end{align*}
where $B_{n,t} = B_n-t\xi_n^b/\sqrt{n}$. By \eqref{eq:CM} we can next replace $\Pi_{n,t}$ 
in the numerator by $\Pi$ at the
cost of another multiplicative $1+o_{P_0}(1)$ term. This turns the quotient into the ratio
$\Pi(B_{n,t}|X^{(n)})/\Pi(B_n|X^{(n)})$. We have already shown that $\Pi(B_n|X^{(n)})=1-o_{P_0}(1)$,
so it suffices to show the same result holds true for the numerator. Now
\begin{align*}
B_{n,t}^c  &= \bigl\{ v: v+{t\xi_n^b}/{\sqrt{n}} \not\in \mH_n^b\bigr\} 
\cup \bigl\{ v: \|\Psi(v+{t\xi_n^b}/{\sqrt{n}} )-b_0\|_{L^2(F_0)} > \varepsilon_n^b \bigr\}  \\
&\qquad\qquad\cup \bigl\{ v: |U_n(v+{t\xi_n^b}/{\sqrt{n}}  )| > M\sqrt{n} \varepsilon_n^b \| \xi_n^b\|_{\H^b} \bigr\}.
\end{align*}
The posterior probability of the first set tends to zero in probability by assumption.  Since
$\bigl\|\Psi(\eta^b + t\xi_n^b/\sqrt{n})-\Psi(\eta^b)\bigr\|_{L^2(F_0)}\lesssim
\|\xi_n^b/\sqrt n\|_{L^2(F_0)} \lesssim 1/\sqrt n$,
the second set is contained in
$\{ \eta^b: \|\Psi(\eta^b)-b_0\|_{L^2(F_0)} > \varepsilon_n^b -C/\sqrt{n}\}$, which has posterior
probability $o_{P_0}(1)$ by Lemma~\ref{lem:contrac_prod_b}, possibly after replacing
$\varepsilon_n^b$ by a multiple of itself. For the third set, we use that 
$U_n(\eta^b+t\xi_n^b/\sqrt{n})\sim N(-t\|\xi_n^b\|_\H^2/\sqrt n, \|\xi_n^b\|_\H^2)$ if $\eta^b$ is distributed
according to the prior, by Lemma~\ref{LemCM}. Since the mean $t\|\xi_n^b\|_\H^2/\sqrt n$ of this variable
is negligible relative to its standard deviation, 
$\Pi \bigl( |U_n(\eta^b+t\xi_n^b/\sqrt{n}) | > M\sqrt{n} \varepsilon_n^b \| \xi_n^b\|_{\H^b}\bigr)$
differs not substantially from the left side of \eqref{EqUnivariateGaussian}, whence
it is also exponentially small, so that again Lemma~\ref{lem:small_prior_prob} applies
to see that the posterior probability tends to zero.
\end{proof}

\begin{proof}[Proof of Corollary~\ref{cor:RL_F}]
The proof follows by verifying the conditions of Proposition~\ref{prop:gp_gen_F}, separately for
the two prior processes.

\textbf{Series prior} \eqref{eq:series_prior}: Using the form of the concentration
function in the proof of Theorem~4.5 of \cite{vandervaart2008}, we see that \eqref{eq:gaus_conc_rate} is satisfied for
$$\varepsilon_n^b = n^{-\frac{\beta\wedge \bar{\beta}}{2\bar{\beta}+d}} \log n.$$
Condition~\eqref{eq:prob_sup2} is verified in Lemma~\ref{lem:sup_series},
under the assumption $\beta \wedge \bar{\beta} >d/2$.

It thus remains only to establish \eqref{eq:RKHS_cond_b}, the approximation by elements of the RKHS. Write $J = J_{\bar{\beta}}$ and define $V_J = \text{span}(\psi_{jk}: j\leq J, k)$. Recall that the RKHS of the Gaussian series prior \eqref{eq:series_prior} equals
\begin{equation}\label{eq:RKHS_series}
\H^b = \Bigl\{ w\in V_J: \|w\|_{\H^b}^2 := \sum_{j\leq J} \sum_k \sigma_j^{-2} |\langle w,\psi_{jk}\rangle_{L^2}|^2 < \infty \Bigr\}.
\end{equation}
From the computations in Theorem 4.5 of \cite{vandervaart2008}, one gets that for $\xi_{\eta_0}^b = a_0 \in C^\alpha$ and any $\zeta_n^b \gtrsim n^{-\alpha/(2\bar{\beta}+d)}$,
\begin{align}\label{RKHS_approx_b}
\inf_{\xi: \|\xi-a_0\|_\infty \leq \zeta_n^b} \|\xi\|_{\H^b} \lesssim \begin{cases} (\zeta_n^b)^{-\frac{r-\alpha+d/2}{\alpha} \wedge 0} & \quad \text{if } r-\alpha+d/2 \neq 0,\\
\log (1/\zeta_n^b) & \quad \text{if } r-\alpha+d/2=0.
\end{cases}
\end{align}
If follows that \eqref{eq:RKHS_cond_b} is satisfied if we can choose $\zeta_n^b \to 0$ so that the right side of the display is bounded above by $\sqrt n\zeta_n^b$ and $\sqrt n\varepsilon_n^b\zeta_n^b\rightarrow 0$.
\begin{itemize}
\item If $r-\alpha+d/2>0$, then \eqref{RKHS_approx_b} is bounded by $\sqrt{n}\zeta_n^b$ for $\zeta_n^b \gtrsim n^{-\alpha/(2r+d)}$. Since we also require $\zeta_n^b \gtrsim n^{-\alpha/(2\bar{\beta}+d)}$, we may take $\zeta_n^b \sim n^{-\alpha/(2\bar{\beta}+d)} \vee n^{-\alpha/(2r+d)} = n^{-\alpha/(2\bar{\beta}+d)}$ since $r\leq \beta \wedge \bar{\beta}$ by assumption. Then $\sqrt{n} \varepsilon_n^b \zeta_n^b \rightarrow 0$ for $\beta \wedge \bar{ \beta}>d/2 + \bar{\beta}-\alpha$.
\item If $r-\alpha+d/2<0$, then \eqref{RKHS_approx_b} is bounded by $\sqrt{n}\zeta_n^b$ and also $\zeta_n^b \gtrsim n^{-\alpha/(2\bar{\beta}+d)}$ for the choice $\zeta_n^b \sim n^{-1/2} \vee n^{-\alpha/(2\bar{\beta}+d)}$. If $1/2 \leq \alpha/(2\bar{\beta}+d)$, then $\sqrt{n}\eps_n^b \zeta_n^b \sim \eps_n^b \to 0$. If $1/2 > \alpha/(2\bar{\beta}+d)$, then $\sqrt{n}\eps_n^b \zeta_n^b \sim n^{(\bar{\beta}+d/2-\beta\wedge\bar{\beta}-\alpha)/(2\bar{\beta}+d)} (\log n)\to 0$ for $\beta \wedge \bar{ \beta}>d/2 + \bar{\beta}-\alpha$.
\item If $r-\alpha+d/2=0$, then one takes $\zeta_n \sim [(\log n)^{1/2} n^{-1/2}]\vee n^{-\alpha/(2\bar{\beta}+d)}$. This is the same as the previous case apart from the extra logarithmic factor, so $\sqrt{n}\eps_n^b \zeta_n^b \to 0$ under exactly the same conditions.
\end{itemize}
Examining all the cases, the above can be summarized as \eqref{eq:RKHS_cond_b} holds if $\beta\wedge\bar{\beta}>[\bar{\beta}-\alpha+d/2]\vee 0$. Together with the condition $\beta \wedge \bar{\beta}>d/2$ needed to verify \eqref{eq:prob_sup2} above, this is equivalent to $\alpha,\beta>d/2$ and $d/2 < \bar{\beta}<\alpha+\beta-d/2$.

\textbf{Riemann-Liouville prior} \eqref{eq:RL_process}: 
The proof follows in much the same way. Using the form of the concentration
function in Theorem~4 of Castillo \cite{castillo2008}, we see that \eqref{eq:gaus_conc_rate} is satisfied for
$$\varepsilon_n^b = n^{-\frac{\beta\wedge \bar{\beta}}{2\bar{\beta}+1}} (\log n)^\kappa,$$
where  $\kappa$ is function of $(\beta,\bar\beta)$, given explicitly in \cite{castillo2008}.
%where $\kappa=9$ if $\bar{\beta} \leq \beta$, $\bar{\beta}-\lfloor \bar{\beta} \rfloor =1/2$ or
%  $\bar{\beta} \not\in \beta+1/2+\mathbb{N}$ and
%  $\kappa = \tfrac{\beta\wedge \bar{\beta}}{2\bar{\beta}+1}$ otherwise. 
Condition~\eqref{eq:prob_sup2} is verified in Lemma~\ref{lem:sup_RL},
under the assumption $\beta \wedge \bar{\beta} >1/2$. 

It thus remains to establish \eqref{eq:RKHS_cond_b}. 
Recall that the RKHS of the Riemann-Liouville process is the Sobolev space $H^{\bar{\beta}+1/2}$.
From the computations in Theorem~4 of \cite{castillo2008}, one
gets that for $\xi_{\eta_0}^b = a_0 \in C^\alpha$, as $\zeta_n^b\rightarrow 0$,
\begin{align*}
\inf_{\xi: \|\xi-a_0\|_\infty \leq \zeta_n^b} \|\xi\|_{\H^b} \lesssim (\zeta_n^b)^{-\frac{\bar{\beta}-\alpha+1/2}{\alpha} \wedge 0}.
\end{align*}
If follows that \eqref{eq:RKHS_cond_b} is satisfied if we can choose $\zeta_n^b$ so that the right
side of the display is bounded above by $\sqrt n\zeta_n^b$ and
$\sqrt n\varepsilon_n^b\zeta_n^b\rightarrow 0$.  If $\bar{\beta}\leq\alpha
-1/2$, %(and $a_0$ is contained in the RKHS),
simply set $\xi_n^b = \xi_{\eta_0}^b$ and $\zeta_n^b = n^{-1/2} \|\xi_{\eta_0}^b\|_{\H^b}$.  If
$\bar{\beta}>\alpha -1/2$, take $\zeta_n^b = n^{-\frac{\alpha}{2\bar{\beta}+1}}$, so that
$\sqrt{n} \varepsilon_n^b \zeta_n^b \rightarrow 0$ for
$\beta \wedge \bar{ \beta}>1/2 + \bar{\beta}-\alpha$. 
%This can be summarized in the requirement that $\beta\wedge\bar{\beta} > [1/2 + (\bar{\beta}-\alpha)]\vee 0$, which, together with the requirement that $\beta \wedge \bar{\beta}>1/2$, yields
%$\beta\wedge\bar{\beta} > [1/2 + (\bar{\beta}-\alpha)]\vee 1/2$. 
A careful analysis of all cases shows that these inequalities, together with
the requirement $\beta \wedge \bar{\beta}>1/2$, are equivalent to $\alpha,\beta>1/2$ and
$1/2 < \bar{\beta}<\alpha+\beta-1/2$.
\end{proof}

\begin{proof}[Proof of Corollary~\ref{cor:RL_dep_F}]
We verify the conditions of Theorem~\ref{thm:dep_prior_gen_EB}, 
where we replace $\hat a_n$ by a deterministic sequence
with $\|a_n\|_\infty=O(1)$ as explained in the proof of Theorem~\ref{thm:dep_prior_gen_EB}.  
Since $\varepsilon_n^b = n^{-(\beta\wedge \bar{\beta})/(2\bar{\beta}+d)}(\log n)^\kappa$ solves \eqref{eq:gaus_conc_rate} (see proof of Corollary \ref{cor:RL_F}), the contraction rate follows from Lemma~\ref{lem:contrac_dep_EB}.
Together with Lemmas~\ref{lem:sup_RL_dep} and \ref{lem:sup_series_dep} for the Riemann-Liouville and series priors, respectively, this verifies conditions \eqref{EqPriorConcentration3}--\eqref{eq:prob_sup3}.
To verify \eqref{eq:dep_prior_lambda_cond}, we use the Gaussian tail
inequality to see that $\Pi(|\lambda| \geq u_n\sigma_n \sqrt{n})\leq 2e^{-u_n^2 n\sigma_n^2/2}$.
This is bounded above by  $e^{-Ln(\varepsilon_n^b)^2}$ for $u_n\rightarrow0$ sufficiently slowly, 
since  $\varepsilon_n^b = o(\sigma_n)$ by assumption. 
Lemma~\ref{lem:small_prior_prob} now implies \eqref{eq:dep_prior_lambda_cond}.
\end{proof}

\noindent \textbf{Acknowledgements:} We would like to thank two referees for helpful comments and for drawing several references to our attention. The first author would also like to thank Richard Nickl for helpful conversations on symmetrization. Much of this work was done while Kolyan Ray was a postdoc at Leiden University.

%\begin{supplement}[id=suppA]
%  \sname{Supplement}
%  \stitle{ {\em Semiparametric Bayesian causal inference}}
%  \slink[doi]{COMPLETED BY THE TYPESETTER}
%  \sdatatype{.pdf}
%  \sdescription{In the supplement 
%  we present an additional theorem, putting a general prior on $(a,b,f)$, and we provide the missing proofs. We linearly
%continue the numbering scheme for sections, lemmas, etc., from
%the main document in the supplement, and items referred to which do not
%appear in the main article can be found in the supplement (e.g. Lemma \ref{lem:lik_b_approx}).}
%\end{supplement}  

\bibliography{double_robust}{}
\bibliographystyle{acm}

\newpage

In the next sections, we present an additional theorem, putting a general prior on $(a,b,f)$, and provide the remaining proofs.

%In this supplement we present an additional theorem, putting a general prior on $(a,b,f)$, and provide the missing proofs from the main article \cite{main}. To avoid confusion, we continue the numbering scheme (e.g. sections,
%equations, lemmas, etc.) from the main document in a linear way.
%Hence references to items that are not in this supplement concern the main
%manuscript, and vice versa.

\section{General prior on $a$, $b$ and $f$}
\label{sec:general_prior_f}

Both Theorem~\ref{thm:general_F} and Theorem~\ref{thm:dep_prior_gen_EB} put a Dirichlet process prior on $F$.
In this section we study putting a prior on a density of $F$. In the main theorem, we consider a general prior $\Pi$ on the triple $(a,b,f)$, 
or equivalently on the triple $\eta=(\eta^a, \eta^b, \eta^f)$ constructed through the parame\-trization \eqref{eq:paramtrization}.
This leads to an analogue of Theorem~\ref{thm:general_F}. A similar analogue of 
Theorem~\ref{thm:dep_prior_gen_EB} is also possible, but omitted. Later in the section we specialize the main theorem to
Gaussian process priors.

Define $\eta_t(\eta) =\eta_t(\eta; n,\xi_{\eta_0})$ to be a perturbation of $\eta=(\eta^a,\eta^b,\eta^f)$ 
in the least favourable direction as follows:
\begin{equation}\label{eq:LFD_f_den}
\eta_t(\eta) %= (\eta_t^a,\eta_t^b,\eta_t^f) 
= \Bigl(\eta^a, \eta^b - \frac{t}{\sqrt{n}}\xi_{\eta_0}^b , 
\eta^f - \frac{t}{\sqrt{n}}\xi_{\eta_0}^f  -\log \textstyle\int e^{\eta^f -t\xi_{\eta_0}^f/\sqrt{n}}\,dz\Bigr).
\end{equation}

\begin{theorem}\label{thm:general_f}
Consider an arbitrary prior $\Pi$ on $\eta = (\eta^a,\eta^b,\eta^f)$. 
Assume that there exist measurable sets $\mH_n$ of functions satisfying
\begin{align}
 \Pi(\eta\in\mH_n|X^{(n)}) &\rightarrow^{P_0} 1,\label{EqPriorConcentration}\\
 \sup_{b=\Psi(\eta^b):\eta\in \mH_n} \|b-b_0\|_{L^2(F_0)} &\rightarrow 0,\label{EqConsistencyb}\\
 \sup_{f=e^{\eta^f}\!/\!\int e^{\eta^f}dz: \eta \in \mH_n} \|f-f_0\|_1  &\rightarrow 0,\label{EqConsistencyf}\\
\sup_{b=\Psi(\eta^b): \eta \in \mH_n} \bigl|\G_n[b-b_0]\bigr| &\rightarrow^{P_0} 0, \label{eq:prob_sup}
\end{align}
and also 
\begin{align}\label{eq:no_bias_f}
&\sup_{b=\Psi(\eta^b), f=e^{\eta^f}\!/\!\int e^{\eta^f}dz: \eta\in \mH_n} \Bigl| \sqrt{n}\int (b-b_0)(f-f_0)\,dz \Bigr| \rightarrow 0.
\end{align}
If for the path $\eta_t(\eta)$ given in \eqref{eq:LFD_f_den} and every $t$,
\begin{align}\label{eq:prior_shift_cond1}
\frac{\int_{\mH_n} \prod_{i=1}^np_{\eta_t(\eta)}(X_i)\, d\Pi(\eta)}{\int_{\mH_n} \prod_{i=1}^np_{\eta}(X_i) \, d\Pi(\eta)} \rightarrow^{P_0} 1,
%\frac{\int_{\mH_n} e^{\ell_n(\eta_t)} d\Pi(\eta)}{\int_{\mH_n} e^{\ell_n(\eta)} d\Pi(\eta)} \rightarrow^{P_0} 1,
\end{align}
then the posterior distribution of $\chi(\eta)$ satisfies the BvM theorem. 
\end{theorem}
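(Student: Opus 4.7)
The plan is to mirror the Laplace-transform approach of Theorem~\ref{thm:general_F}, with the new work concentrated in a LAN expansion of the $f$-factor of the log-likelihood and a bias-cancellation identity that turns the product bias into the content of hypothesis \eqref{eq:no_bias_f}. Conditioning the prior on $\mH_n$ is legitimate via \eqref{EqPriorConcentration} and the usual total-variation bound on page~142 of \cite{vandervaart1998}. Choose $\widehat\chi_n=\chi(\eta_0)+\P_n\widetilde\chi_{\eta_0}$ so the remainder in \eqref{EqEffcieintEstimator} is zero, and study the posterior Laplace transform
\begin{equation*}
I_n(t)=\int_{\mH_n}\frac{e^{t\sqrt{n}(\chi(\eta)-\chi(\eta_0))-t\G_n[\widetilde\chi_{\eta_0}]+\ell_n(\eta)-\ell_n(\eta_t)}\,e^{\ell_n(\eta_t)}}{\int_{\mH_n}e^{\ell_n(\eta')}\,d\Pi(\eta')}\,d\Pi(\eta),
\end{equation*}
with $\eta_t=\eta_t(\eta)$ the path in \eqref{eq:LFD_f_den}; the goal is $I_n(t)\to e^{t^2\|\xi_{\eta_0}\|_{\eta_0}^2/2}$ in $P_0$-probability, from which Lemma~\ref{LemmaLaplaceTransform} yields the BvM.

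Because the likelihood factorises and $\eta_t^a=\eta^a$, the LAN difference splits as $[\ell_n^b(\eta^b)-\ell_n^b(\eta_t^b)]+[\ell_n^f(\eta^f)-\ell_n^f(\eta_t^f)]$. The first summand is obtained exactly as in Theorem~\ref{thm:general_F}, using \eqref{EqConsistencyb} and \eqref{eq:prob_sup}, and equals $t\G_n[\widetilde\chi_{\eta_0}^b]+t\sqrt{n}\int(b_0-b)\,dF_0+(t^2/2)\|\xi_{\eta_0}^b\|_{b_0}^2+o_{P_0}(1)$ uniformly on $\mH_n$. For the $f$-summand, the constraint $\int e^{\eta^f}\,dz=1$ and the definition of $\eta_t^f$ give the closed form $t\G_n[\xi_{\eta_0}^f]+n\log\int f\,e^{-t\xi_{\eta_0}^f/\sqrt{n}}\,dz$. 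Since $\xi_{\eta_0}^f=b_0-F_0 b_0$ is bounded, a Taylor expansion of the exponential inside the integral and of the outer logarithm produces $-t\sqrt{n}\,F\xi_{\eta_0}^f+(t^2/2)[F(\xi_{\eta_0}^f)^2-(F\xi_{\eta_0}^f)^2]+O(n^{-1/2})$. Hypothesis \eqref{EqConsistencyf} then allows the replacements $F(\xi_{\eta_0}^f)^2=\|\xi_{\eta_0}^f\|_{F_0}^2+o(1)$ and $(F\xi_{\eta_0}^f)^2=o(1)$ uniformly on $\mH_n$ at cost $\|(\xi_{\eta_0}^f)^2\|_\infty\|f-f_0\|_1$; noting $F\xi_{\eta_0}^f=\int b_0(f-f_0)\,dz$ yields
\begin{equation*}
\ell_n^f(\eta^f)-\ell_n^f(\eta_t^f)=t\G_n[\xi_{\eta_0}^f]-t\sqrt{n}\!\int b_0(f-f_0)\,dz+\tfrac{t^2}{2}\|\xi_{\eta_0}^f\|_{F_0}^2+o_{P_0}(1).
\end{equation*}

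Summing and using $\widetilde\chi_{\eta_0}=\widetilde\chi_{\eta_0}^b+\xi_{\eta_0}^f$ and $\|\xi_{\eta_0}\|_{\eta_0}^2=\|\xi_{\eta_0}^b\|_{b_0}^2+\|\xi_{\eta_0}^f\|_{F_0}^2$, the two empirical-process contributions reassemble into $t\G_n[\widetilde\chi_{\eta_0}]$, which cancels the identical term in the exponent of $I_n(t)$. The remaining drift terms are then recombined through the algebraic identity
\begin{equation*}
\int(b_0-b)f_0\,dz-\int b_0(f-f_0)\,dz=-\bigl(\chi(\eta)-\chi(\eta_0)\bigr)+\int(b-b_0)(f-f_0)\,dz,
\end{equation*}
whose first piece cancels the $t\sqrt{n}(\chi(\eta)-\chi(\eta_0))$ factor in the exponent while the product bias vanishes uniformly on $\mH_n$ by the crucial hypothesis \eqref{eq:no_bias_f}. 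The integrand therefore collapses to $e^{(t^2/2)\|\xi_{\eta_0}\|_{\eta_0}^2+o_{P_0}(1)}\,e^{\ell_n(\eta_t)}$, and $I_n(t)$ reduces to $e^{(t^2/2)\|\xi_{\eta_0}\|_{\eta_0}^2}$ multiplied by the likelihood ratio \eqref{eq:prior_shift_cond1}, which tends to $1$ by assumption.

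The hardest step will be the uniform control of the Taylor expansion of $n\log\int f\,e^{-t\xi_{\eta_0}^f/\sqrt{n}}\,dz$ on $\mH_n$, together with the safe replacement of $F(\xi_{\eta_0}^f)^2$ by $\|\xi_{\eta_0}^f\|_{F_0}^2$ in the quadratic coefficient; both rely in an essential way on the $L^1$-consistency \eqref{EqConsistencyf}. It is precisely this point that forces the additional no-bias requirement \eqref{eq:no_bias_f}, since without it the product-bias piece $t\sqrt{n}\int(b-b_0)(f-f_0)\,dz$ survives into the BvM normal approximation—the very phenomenon that, as remarked in the paper, makes density priors on $f$ increasingly restrictive in higher covariate dimensions and motivates the Dirichlet-process alternative of Theorem~\ref{thm:general_F}.
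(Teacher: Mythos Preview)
Your proposal is correct and follows the same Laplace-transform strategy as the paper: condition on $\mH_n$, center at $\widehat\chi_n=\chi(\eta_0)+\P_n\widetilde\chi_{\eta_0}$, split the LAN expansion into $b$- and $f$-parts via the likelihood factorisation, and reduce to the ratio \eqref{eq:prior_shift_cond1} after the bias cancellation enabled by \eqref{eq:no_bias_f}.

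The only genuine difference is in how the $f$-expansion is obtained. The paper parametrises the path $u\mapsto\eta_u^f$ for $u\in[0,1]$, sets $g(u)=\log p_{\eta_u}$, and computes $g'(0)$, $g''(0)$, $g'''$ through the exponential-family score and information identities $\dot f_u/f_u=-(t/\sqrt n)(\xi_{\eta_0}^f-F_u\xi_{\eta_0}^f)$. You instead exploit that $\ell_n^f(\eta^f)-\ell_n^f(\eta_t^f)$ has the exact closed form $t\G_n[\xi_{\eta_0}^f]+n\log\int f\,e^{-t\xi_{\eta_0}^f/\sqrt n}\,dz$ (using $F_0\xi_{\eta_0}^f=0$ and the normalisation $\int e^{\eta^f}\,dz=1$), and then Taylor-expand the inner exponential and outer logarithm directly. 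Both routes arrive at exactly the same expansion \eqref{eq:LAN_remainder_f}; yours is marginally more elementary because it avoids the exponential-family machinery, while the paper's makes the role of the information norm $\|\xi_{\eta_0}^f\|_{F_0}^2$ and the variance identity $g''(0)=-(t^2/n)\text{Var}_{F_u}(\xi_{\eta_0}^f)$ more transparent. The subsequent algebraic recombination and use of \eqref{eq:no_bias_f} are identical to the paper's.
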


\begin{proof}%[Proof of Theorem~\ref{thm:general_f}]
The total variation distance between the posterior distributions based on the prior $\Pi$ and
the prior $\Pi_n (\cdot):= \Pi(\cdot\cap \mH_n)/\Pi(\mH_n)$, which is $\Pi$ conditioned to $\mH_n$, is bounded
above by $2\Pi(\mH_n^c|X^{(n)})$ (e.g.\ page~142 of \cite{vandervaart1998}). Since this tends to
zero in probability by assumption and the total variation topology is stronger than the weak topology, 
it suffices to show the desired result for the conditioned prior $\Pi_n$ instead of $\Pi$. 

%We henceforth assume that all suprema are measurable, replacing $P_0$ with outer expectation and probability if this is not the case. 

Let $\widehat\chi_n=\chi(\eta_0)+\P_n\widetilde\chi_{\eta_0}$, so that 
it satisfies \eqref{EqEffcieintEstimator} with the remainder term identically zero.
The posterior Laplace transform of the variable $\sqrt n(\chi(\eta)-\widehat\chi_n)$ is given by, for $t\in \R$, 
\begin{align}
I_n(t) &= \E^{\Pi_n}[e^{t\sqrt{n} (\chi(\eta)-\widehat\chi_n)}|X^{(n)}]\nonumber\\
&= \frac{\int_{\mH_n} e^{t\sqrt{n} \int (bf -  b_0f_0)dz -t\G_n [\widetilde{\chi}_{\eta_0}]+\ell_n(\eta)-\ell_n(\eta_t)}
\, e^{\ell_n(\eta_t)}\, d\Pi(\eta)}
{\int_{\mH_n} e^{\ell_n(\eta)} d\Pi(\eta)},\label{eq:laplace_transform}
\end{align}
for any $\eta_t$, in particular for the path $\eta_t=\eta_t(\eta)$ defined in
\eqref{eq:LFD_f_den}. We shall show that $I_n(t)$ tends in probability to
$\exp (t^2 \| \xi_{\eta_0} \|_{\eta_0}^2/2)$, which is the Laplace transform of a
$N(0, \| \xi_{\eta_0} \|_{\eta_0}^2)$ distribution, for every $t\in\RR$.  Since convergence of
Laplace transforms in probability implies convergence in distribution in probability (see
Lemma~\ref{LemmaLaplaceTransform}), this would complete the proof.

In view of assumption \eqref{eq:prior_shift_cond1} it certainly suffices to show that the exponent
of the first exponential in the numerator of \eqref{eq:laplace_transform} tends to $t^2 \| \xi_{\eta_0} \|_{\eta_0}^2/2$ in
probability, uniformly in $\eta\in\mH_n$.  This entails an expansion of the likelihood
$\ell_n(\eta)-\ell_n(\eta_t)$ along the submodel $\eta_t$. This submodel consists of perturbations
in the directions of $b$ and $f$. Since the likelihood factorizes in these parameters, whence the log
likelihood is additive, the expansion can be performed separately in the perturbations in
the two parameters and the results added. In a slight abuse of notation, we write 
$\eta_t^b = (\eta^a,\eta_t^b,\eta^f)$ and $\eta_t^f=(\eta^a,\eta^b,\eta_t^f)$ for
the path \eqref{eq:LFD_f_den} with the perturbations with $f$ and $b$ held
fixed, respectively, and leave off the argument $\eta$ of $\eta_t=\eta_t(\eta)$.  
For $\widetilde\chi_{\eta}^b=B_\eta^ba$ and 
$\widetilde\chi_{\eta}^f=B_\eta^f\bigl(b-\chi(\eta)\bigr)= b-\chi(\eta)$ 
the components of the efficient influence function in the
$b$ and $f$ directions, respectively (see \eqref{EqEIF}), we shall show that, uniformly in $\eta\in\mH_n$,
\begin{align}
\ell_n^b(\eta) - \ell_n^b(\eta_t^b) 
&= t\G_n[\widetilde{\chi}_{\eta_0}^b] + t\sqrt{n} \int (b_0-b)f_0\, dz +\frac{t^2}{2} \| \xi_{\eta_0}^b \|_{b_0}^2+ o_{P_0}(1),
\hspace{-2em}
\label{eq:LAN_remainder_b}\\
\ell_n^f(\eta) - \ell_n^f(\eta_t^f) 
&= t\G_n[\widetilde{\chi}_{\eta_0}^f] + t\sqrt{n} \int b_0(f_0-f)\, dz + \frac{t^2}{2} \| \xi_{\eta_0}^f \|_{F_0}^2 + o_{P_0}(1). 
\hspace{-2em}
\label{eq:LAN_remainder_f}
\end{align}
Adding these results yields
\begin{align*}
&t\sqrt{n} \int (bf - b_0f_0)dz -t\G_n [\widetilde{\chi}_{\eta_0}]+\ell_n(\eta)-\ell_n(\eta_t)\\
&\qquad\qquad\qquad =  t\sqrt{n} \int (b-b_0)(f-f_0) dz + \frac{t^2}{2} \| \xi_{\eta_0} \|_{\eta_0}^2 + o_{P_0}(1).
\end{align*}
By assumption \eqref{eq:no_bias_f} the first term on the right side tends to zero uniformly over $\mH_n$.
The left side is the exponent in the right of \eqref{eq:laplace_transform}
and the proof is complete. We finish by proving \eqref{eq:LAN_remainder_b} and \eqref{eq:LAN_remainder_f}.

\textbf{$b$ term} \eqref{eq:LAN_remainder_b}: 
We can decompose
\begin{align}
\ell_n^b(\eta) - \ell_n^b(\eta_t^b) 
&= t \G_n [\widetilde{\chi}_{\eta_0}^b] + \sqrt{n}\G_n [\log p_\eta - \log p_{\eta_t^b} 
- \frac{t}{\sqrt{n}} \widetilde{\chi}_{\eta_0}^b]\nonumber\\
&\qquad\qquad\qquad + n P_{\eta_0} [\log p_\eta - \log p_{\eta_t^b}]. \label{eq:lan_least_fav_dir}
\end{align}
We shall show that the second term on the right tends to zero in probability, while
the third term tends to the quadratic $t^2\|a_0\|_{b_0}^2/2$, where  $a_0=\xi_{\eta_0}^b$. 

The definition $\eta_u := (\eta^a,\eta_u^b,\eta^f)$ with $\eta_u^b = \eta^b - tu\xi_{\eta_0}^b/\sqrt{n}$, 
for $u \in [0,1]$, gives a path from $\eta_{u=0} = \eta$ (not $\eta_0$!) to $\eta_{u=1}=\eta_t^b$,
so that $\log p_\eta - \log p_{\eta_t^b}=g(0)-g(1)$, for $g(u) = \log p_{\eta_u}$. We shall replace 
this difference in both terms on the right of \eqref{eq:lan_least_fav_dir} 
by the Taylor expansion $g(0)-g(1)=-g'(0)-g''(0)/2-\theta$, where $|\theta|\le \|g'''\|_\infty$. The expansion
will be uniform in $\eta\in\mH_n$, although the dependence of $g$ and $\theta$ on $\eta$ is not indicated 
in the notation.

By explicit calculations the derivatives of $g$ can be seen to be 
\begin{align*}
g'(u) &= -\frac t{\sqrt n} B_{\eta_u}^b a_{0}=-\frac t{\sqrt n} r\bigl(y-\Psi(\eta_u^b)\bigr)a_0,\\
g''(u) &= -\frac{t^2}n r \Psi'(\eta_u^b) a_{0}^2,\qquad
g'''(u)=\frac{t^3}{n^{3/2}} r \Psi''(\eta_u^b) a_{0}^3,\end{align*}
where we have omitted the function arguments $(r,y,z)$.
Since $|\theta|\le \|g'''\|_\infty\lesssim n^{-3/2}$, it follows that both 
$\sqrt n\G_n\theta$ and $n P_0\theta$ tend to zero in probability, uniformly in $\eta\in \mH_n$. 
Since $B_{\eta_0}^b a_{0} = \widetilde{\chi}_{\eta_0}^b$,
\begin{align*} 
g'(0)&=-\frac t{\sqrt n} B_{\eta}^b a_{0}= -\frac t{\sqrt n} \widetilde{\chi}_{\eta_0}^b +\frac t{\sqrt n}r(b-b_0) a_{0},\\
g''(0)&=-\frac{t^2}n r \Psi'(\eta^b) a_{0}^2=-\frac{t^2}n r \Psi'(\eta_0^b) a_{0}^2
-\frac{t^2}n r \bigl(b(1-b)-b_0(1-b_0)\bigr) a_{0}^2
\end{align*}
for $b = \Psi(\eta^b)$, since $\Psi' = \Psi(1-\Psi)$. 

By assumption \eqref{eq:prob_sup} and Lemma~\ref{LemmaGCnPreservation}, applied with $\mH_{n,1}$ the set of functions
$\sqrt n(b-b_0)$ and $\mH_{n,2}=\{r\}$, 
we have that $\G_n \bigl[r(b-b_0) a_{0}\bigr]\ra 0$ in probability, uniformly in $\{ b=\Psi(\eta^b): \eta\in \mH_n\}$, whence
$\sqrt n \G_n g'(0)=-t\G_n\bigl[ \widetilde{\chi}_{\eta_0}^b ]+o_{P_0}(1)$,
uniformly in $\eta\in \mH_n$. By again assumption \eqref{eq:prob_sup} and Lemma~\ref{LemmaGCnPreservation},
$\G_n \bigl[r \bigl(b(1-b)-b_0(1-b_0)\bigr) a_{0}^2\bigr]\ra 0$ in probability,
whence $\sqrt n\G_ng''(0)=O_{P_0}(n^{-1/2})\ra 0$ in probability.
We conclude that the second term on the right in \eqref{eq:lan_least_fav_dir} tends to zero
in probability, uniformly in $\eta\in \mH_n$.

Since $\Psi'(\eta_0^b) = b_0(1-b_0)$ and $ \int b_0(1-b_0) a_0\, dF_0=\|\xi_{\eta_0}^b\|_{b_0}^2$,
\begin{align*}
-nP_{\eta_0} g'(0)&=t\sqrt{n} \int (b_0-b)\,dF_0,\\
-nP_{\eta_0} g''(0)-t^2\| \xi_{\eta_0}^b \|_{b_0}^2 
&= t^2P_{\eta_0}[r\big(b (1-b)- b_0(1-b_0)\bigr) a_0^2]\\
&\lesssim  P_{\eta_0}[r|b-b_0|a_0^2]\le \|b-b_0\|_{L^1(F_0)}\|a_0\|_\infty.
\end{align*}
Therefore  $nP_{\eta_0}[-g'(0)-g''(0)/2]$ is equal to 
$ t\sqrt{n} \int (b_0-b)\,dF_0 + t^2\| \xi_{\eta_0}^b \|_{b_0}^2/2 +o_{P_0}(1)$.
The third term on the right of \eqref{eq:lan_least_fav_dir}
is equivalent to the same expression.
This concludes the proof of \eqref{eq:LAN_remainder_b}.

\textbf{$f$ term} \eqref{eq:LAN_remainder_f}:
We use the same decomposition \eqref{eq:lan_least_fav_dir}, but with $b$ replaced by $f$. 
Define the path $\eta_u := (\eta^a,\eta^b,\eta_u^f)$ with $\eta_u^f = \eta^f - tu\xi_{\eta_0}^f/\sqrt{n} +\log c_u$,
for $c_u^{-1}=\int e^{\eta^f - tu\xi_{\eta_0}^f/\sqrt{n}}\,dz$ the norming constant and $u\in[0,1]$. Then $f_u=e^{\eta_u^f}$
is a one-dimensional exponential family in $u$ with score function 
$\dot f_u/f_u=-(t/\sqrt n)(\xi_{\eta_0}^f-F_u\xi_{\eta_0}^f)$ (note that $\dot c_u/c_u=(t/\sqrt n)\int \xi_{\eta_0}^ff_u$).
By explicit computation (or exponential family identities), we see that the function $g(u)= \log p_{\eta_u}$
possesses derivatives
\begin{align*}
g'(u)&=-\frac t{\sqrt n}(\xi_{\eta_0}^f-F_u\xi_{\eta_0}^f),\qquad
g''(u)=-\frac {t^2}{n}\int (\xi_{\eta_0}^f-F_u\xi_{\eta_0}^f)^2\,dF_u,\\
g'''(u)&=\frac {t^3}{n^{3/2}}\int (\xi_{\eta_0}^f-F_u\xi_{\eta_0}^f)^3\,dF_u.
\end{align*}
The third derivative is bounded by a multiple of $n^{-3/2}$, uniformly in $u$ and $f$.
Since $g''(u)$ is a constant and the empirical process centered, $\sqrt n\G_ng''(0)=0$,
while $\sqrt n\G_ng'(0)= -t\G_n \widetilde{\chi}_{\eta_0}^f$. Next
$nP_{\eta_0}g'(0)=-t\sqrt n(F_0-F)\xi_{\eta_0}^f$, while
$$nP_{\eta_0}g''(0)=-t^2\int (\xi_{\eta_0}^f-F\xi_{\eta_0}^f)^2\,dF
=-t^2\int (\xi_{\eta_0}^f-F_0\xi_{\eta_0}^f)^2\,dF_0+o(1),$$
uniformly in $\{f: \eta\in\mH_n\}$ by assumption \eqref{EqConsistencyf}. Inserting these approximations
together with the Taylor expansion $\log p_\eta - \log p_{\eta_t^f}= -g'(0)-g''(0)/2+O(n^{-3/2})$ in \eqref{eq:lan_least_fav_dir},
with $b$ replaced by $f$, yields \eqref{eq:LAN_remainder_f}.
\end{proof}

Conditions \eqref{EqPriorConcentration}--\eqref{eq:no_bias_f} permit to control the remainder terms
in an expansion of the likelihood. The first four conditions
\eqref{EqPriorConcentration}--\eqref{eq:prob_sup} require that the posterior concentrates on
shrinking neighbourhoods about the true parameters $b_0$ and $f_0$, though not $a_0$, and hence
mostly require consistency, whereas the remaining condition \eqref{eq:no_bias_f} also requires a
$\sqrt n$-rate on a certain bias term. 

In Theorems~\ref{thm:general_F}-\ref{thm:dep_prior_gen_EB}, 
which put a Dirichlet prior on the distribution $F$ rather than a prior on the
density $f$, condition \eqref{eq:no_bias_f} does not appear and hence this might be interpreted as
involving a bias incurred by possibly putting the wrong prior on $F$.
% the fully nonparametric Dirichlet prior being preferable in general.  
The condition, which seems tied to any prior that
directly models $f$, may be satisfied for reasonable priors if both $b$ and $f$ are sufficiently
smooth, but in the situation where $f$ has low regularity, even correctly calibrating the smoothness
of the prior on $f$ can perform worse than naively using a Dirichlet process.  The condition
provides another example where an infinite-dimensional prior can induce an undesired
bias \cite{freedman1999,ritov2014, knapik2011,castillo2015}. This effect becomes more pronounced as
the covariate dimension increases and can be problematic in even moderate dimensions.
%, see the discussion after Corollary~\ref{cor:RL_f} for a quantitative statement.

Consider equipping both $\eta^b$ and $\eta^f$ with Gaussian process priors.  Given
independent mean-zero Gaussian processes  $W^b = (W_z^b: z\in [0,1]^d)$ and
$W^f = (W_z^f:z\in[0,1]^d)$, consider the prior
\begin{align}\label{eq:prior_bf}
b(z) &= \Psi(W_z^b ),\\
\label{eq:prior_f}
f(z) &= \frac{e^{W_z^f}}{\int_{[0,1]^d} e^{W_u^f}\, du}.
\end{align}
We write $\varepsilon_n^i$, for $i\in \{ b,f\}$, for the respective contraction rates for the
two parameters $b$ and $f$, and $\H^i$ for the RKHS corresponding to the process $W^i$. 

\begin{proposition}\label{prop:gp_gen_f}
Consider the product Gaussian process prior \eqref{eq:prior_bf}-\eqref{eq:prior_f} on $b$ and
$f$. Let $\varepsilon_n^b\rightarrow 0$ satisfy \eqref{eq:gaus_conc_rate} with respect to the norm
$\|\cdot\|_\infty$  and suppose $\sqrt{n}\varepsilon_n^b\varepsilon_n^f\rightarrow 0$,
where $\varepsilon_n^f\rightarrow 0$ is a rate of contraction in $L^2$ of the posterior distribution of $f$ to $f_0$. Suppose there exist
sequences $\xi_n = (\xi_n^b,\xi_n^f) \in \H^{b} \times \H^{f}$ and
$\zeta_n^b,\zeta_n^f \rightarrow 0$ such that
\begin{equation}\label{eq:RKHS_cond_f}
\begin{split}
\|\xi_n^i - \xi_{\eta_0}^i\|_\infty \leq \zeta_n^i, \quad  \|\xi_n^i\|_{\H^{i}} \leq \sqrt{n} \zeta_n^i, \quad 
\sqrt{n}\varepsilon_n^i \zeta_n^i \rightarrow 0,\qquad  i\in \{b,f\}.
\end{split}
\end{equation}
Suppose further that there exist measurable sets $\mH_n^b$ of functions such that 
$\Pi\bigl(\eta^b\in(\mH_n^b -t\xi_n^b/\sqrt{n}) |X^{(n)}\bigr)\rightarrow^{P_0} 1$ for every $t\in \R$ and
\eqref{eq:prob_sup} holds.
%Suppose further that there exist measurable sets $\mG_n$ of functions $\eta=(\eta^b,\eta^f)$ such that %$\Pi(\mG_n|X^{(n)}) \rightarrow^{P_0} 1$, 
%$\Pi\bigl(\eta\in\mG_n -t(\xi_n^b,0)/\sqrt{n} |X^{(n)}\bigr)\rightarrow^{P_0} 1$ for every $t\in \R$ and
%\begin{equation}
%\label{Eqprob_supbf}
%\sup_{b=\Psi(\eta^b): (\eta^b,\eta^f) \in \mG_n} \bigl|\G_n[b-b_0]\bigr| \rightarrow^{P_0} 0.
%\end{equation}
Then the posterior distribution satisfies the semiparametric BvM theorem.
\end{proposition}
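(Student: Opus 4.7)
The plan is to verify the hypotheses of Theorem~\ref{thm:general_f} by handling the two independent Gaussian process components in parallel, following the strategy of Proposition~\ref{prop:gp_gen_F}. First I would define
\begin{align*}
\mH_n = \bigl\{\eta: \eta^b\in \mH_n^b,\ \|b-b_0\|_{L^2(F_0)}\leq \varepsilon_n^b,\ \|f-f_0\|_{L^2}\leq \varepsilon_n^f\bigr\},
\end{align*}
and deduce $\Pi(\mH_n\given X^{(n)})\rightarrow^{P_0} 1$ by combining the posterior contraction rate for $b$ (via Lemma~\ref{lem:contrac_prod_b}), the assumed $L^2$-rate $\varepsilon_n^f$ for $f$, and the hypothesis on $\mH_n^b$ taken at $t=0$. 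This yields \eqref{EqPriorConcentration}; conditions \eqref{EqConsistencyb}--\eqref{EqConsistencyf} hold by construction (with $L^2$ and $L^2(F_0)$ interchangeable since $f_0$ is bounded away from $0$ and $\infty$), and \eqref{eq:prob_sup} is assumed. The bias condition \eqref{eq:no_bias_f} is then immediate from Cauchy--Schwarz:
\begin{align*}
\Bigl|\sqrt{n}\int (b-b_0)(f-f_0)\,dz\Bigr|\leq \sqrt{n}\|b-b_0\|_{L^2}\|f-f_0\|_{L^2}\lesssim \sqrt{n}\varepsilon_n^b\varepsilon_n^f\rightarrow 0
\end{align*}
uniformly over $\eta\in\mH_n$.

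The main work is verifying the change-of-measure condition \eqref{eq:prior_shift_cond1}. Because the priors on $b$ and $f$ are independent and both the log-likelihood $\ell_n=\ell_n^b+\ell_n^f$ and the perturbation \eqref{eq:LFD_f_den} split additively in the two components, the ratio in \eqref{eq:prior_shift_cond1} factorizes into a $b$-part and an $f$-part, each of which I would attack by Cameron--Martin exactly as in the proof of Proposition~\ref{prop:gp_gen_F}. For each $i\in\{b,f\}$ one picks the RKHS approximant $\xi_n^i\in\H^i$ provided by \eqref{eq:RKHS_cond_f}, applies Lemma~\ref{LemCM} to express the shifted prior as the original prior times the exponential martingale $\exp(tU_n^i/\sqrt{n} - t^2\|\xi_n^i\|_{\H^i}^2/(2n))$, and then uses $\sqrt{n}\varepsilon_n^i\zeta_n^i\rightarrow 0$ together with the univariate Gaussian tail bound and Lemma~\ref{lem:small_prior_prob} to restrict to a posterior set on which the Radon--Nikodym factor equals $1+o_{P_0}(1)$, paralleling \eqref{eq:CM}. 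A two-parameter version of Lemma~\ref{lem:lik_b_approx} then absorbs the difference between $\xi_n^i$ and $\xi_{\eta_0}^i$ inside the log-likelihood at cost $o_{P_0}(1)$, uniformly over $\mH_n$.

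The hardest part will be the $f$ component, because \eqref{eq:LFD_f_den} includes the nontrivial renormalization $\log\int e^{\eta^f-t\xi_{\eta_0}^f/\sqrt{n}}\,dz$ needed to keep $f_t$ a probability density; this is not a pure translation of $\eta^f$. Since $e^{\eta^f}$ integrates to one on $\mH_n$, a Taylor expansion in $t/\sqrt{n}$ gives $\log\int e^{\eta^f-t\xi_{\eta_0}^f/\sqrt{n}}\,dz = -(t/\sqrt{n})\int \xi_{\eta_0}^f\,dF + O(1/n)$ uniformly on $\mH_n$, so that combined with the $-(t/\sqrt n)\xi_{\eta_0}^f$ shift it produces exactly the centred score direction and contributes only a $1+o_{P_0}(1)$ deterministic factor to the likelihood ratio, which is absorbed into the Cameron--Martin computation. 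After all cancellations the ratio in \eqref{eq:prior_shift_cond1} reduces to $\Pi(\mH_{n,t}\given X^{(n)})/\Pi(\mH_n\given X^{(n)})$ for a translated set $\mH_{n,t}$, whose numerator tends to one by the assumption $\Pi(\eta^b\in (\mH_n^b - t\xi_n^b/\sqrt{n})\given X^{(n)})\rightarrow^{P_0} 1$ together with analogous shift-invariance of the $L^2$ contraction neighbourhoods for $b$ and $f$ (a standard consequence of Lemma~\ref{LemCM}), completing the verification.
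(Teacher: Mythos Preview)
Your overall strategy matches the paper's: define $\mH_n$ exactly as you do, verify \eqref{EqPriorConcentration}--\eqref{eq:no_bias_f} via contraction plus Cauchy--Schwarz, then factorize \eqref{eq:prior_shift_cond1} into a $b$-part handled as in Proposition~\ref{prop:gp_gen_F} and an $f$-part. The paper does not rework the $f$-part at all; it simply observes that this is the exponentiated Gaussian process density model and invokes Proposition~3 of \cite{castillo2015}, after noting that $h(f,f_0)\lesssim\|f-f_0\|_{L^2}$ so that the $L^2$-rate $\varepsilon_n^f$ localizes the posterior in Hellinger distance as required there.

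Your direct Cameron--Martin sketch for the $f$-part is in the right spirit but is imprecise at one point. The cleanest route is to recognize that the perturbation \eqref{eq:LFD_f_den} of $\eta^f$ corresponds \emph{exactly} to the pure translation $W^f\mapsto W^f - t\xi_{\eta_0}^f/\sqrt n$ of the underlying Gaussian process (before normalization), since the map $W\mapsto W-\log\int e^W$ intertwines the two. With this identification the normalization constant drops out of the change-of-variables entirely and one can run Cameron--Martin on $W^f$ verbatim. Your phrasing that the normalization ``contributes only a $1+o_{P_0}(1)$ deterministic factor to the likelihood ratio'' is misleading as written: the term $n\log\int e^{\eta^f-t\xi_{\eta_0}^f/\sqrt n}$ is of order $t\sqrt n\,F\xi_{\eta_0}^f=O(\sqrt n\,\varepsilon_n^f)$ on $\mH_n$, which need not be $o(1)$; it is only after pairing with the linear part of the log-likelihood and the Cameron--Martin shift that everything cancels. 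Also, there is no ``two-parameter version of Lemma~\ref{lem:lik_b_approx}'' in the paper; the $f$-analogue requires a separate (and simpler, since $\ell_n^f$ is linear in $\eta^f$ up to the normalizer) computation, which is precisely what \cite{castillo2015} supplies.
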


\begin{proof}%[Proof of Proposition~\ref{prop:gp_gen_f}]
We verify the conditions of Theorem~\ref{thm:general_f}. Since the likelihood factorizes and we
have a product prior on $b$ and $f$, the posterior is also a product measure. By Lemma~\ref{lem:contrac_prod_b} 
with norm $\|\cdot\|_\infty$, the posterior distribution of $b$ contracts about $b_0$ at rate $\varepsilon_n^b$ in
$L^2$, while the posterior of $f$ contracts to $f_0$ in $L^2$ at rate  $\varepsilon_n^f$, by assumption.
For $\mH_n^b$ the sets as in the statement of the proposition, define
$$\mH_n = \Bigl\{(\eta^a,\eta^b,\eta^f): \eta^b\in\mH_n^b, \|\Psi(\eta^b)-b_0\|_{L^2} \leq \varepsilon_n^b, 
\bigl\|\frac{e^{\eta^f}}{\textstyle{\int} e^{\eta^f}dz}-f_0\bigr\|_{L^2} \leq \varepsilon_n^f \Bigr\}.$$
Then $\Pi(\mH_n|X^{(n)}) \rightarrow^{P_0} 1$ as $n\rightarrow \infty$, by assumption. Furthermore, for any
$\eta\in \mH_n$, by the Cauchy-Schwarz inequality and the assumption
that $f_0$ is bounded away from zero,
$\bigl| \sqrt{n}\int(b-b_0)(f-f_0)dz \bigr|  \lesssim\sqrt{n} \|b-b_0\|_{L^2(F_0)} \|f-f_0\|_{L^2} 
\leq \sqrt{n}\varepsilon_n^b \varepsilon_n^f $, which tends to zero by assumption.
It follows that $\mH_n$ satisfies conditions \eqref{EqPriorConcentration}--\eqref{eq:no_bias_f}
of Theorem~\ref{thm:general_f}.

It remains to verify \eqref{eq:prior_shift_cond1}, which by the prior independence of $b$ and $f$
factorizes in a $b$-term and an $f$-term.  The $b$-term was considered in detail in the
proof of Proposition~\ref{prop:gp_gen_F}.

The $f$-term consists of a prior change of measure for the
exponentiated Gaussian process prior, which is exactly the situation considered in
\cite{castillo2015}. Since $h(f,f_0) \lesssim \|f-f_0\|_{L^2}$, one may localize the posterior to a
Hellinger neighbourhood of radius $\varepsilon_n^f$ as in Proposition~3 of \cite{castillo2015}. The
result then follows from \cite{castillo2015}, under the same conditions. 
\end{proof}

While the contraction rate $\varepsilon_n^b$ for $b\in L_2(F_0)$ is given by the solution of \eqref{eq:gaus_conc_rate},
the similar equation for $f$ in general yields a contraction rate in the
Hellinger distance \cite{vandervaart2008}, but the proposition requires a rate in
$L^2$. If the prior is supported on a fixed $L^\infty$-ball, for instance suitably conditioned
Gaussian process priors \cite{gine2011}, then the Hellinger rate automatically implies the
same rate in $L^2$-distance. For unbounded priors, such as
Riemann-Liouville processes, one may often use regularity properties of the Gaussian process to 
bootstrap a Hellinger rate to one in $L^2$, and thus take $\varepsilon_n^f$ to be
a solution to the analogue of \eqref{eq:gaus_conc_rate} for $f$ (see Proposition~5 of \cite{castillo2015}).

For the concrete cases of the Riemann-Liouville process and finite Gaussian series prior, the preceding proposition
implies the following.

\begin{corollary}\label{cor:RL_f}
Suppose $a_0\in C^\alpha([0,1]^d)$, $b_0 \in C^\beta([0,1]^d)$ and $f_0\in C^\gamma([0,1]^d)$. Consider the prior 
\eqref{eq:prior_bf}-\eqref{eq:prior_f} with $W^b$ and $W^f$ finite Gaussian series 
as in \eqref{eq:series_prior} with truncation parameters $J_{\bar{\beta}}$ and $J_{\bar{\gamma}}$, respectively. 
If $\alpha,\beta,\gamma>d/2$, $d/2<\bar{\beta}<\alpha+\beta-1/2$, $d/2<\bar{\gamma}<\gamma+\beta-1/2$ and
\begin{equation}
\label{EqBetaGamma}
\frac{\beta\wedge\bar{\beta}}{2\bar{\beta}+d} + \frac{\gamma\wedge\bar{\gamma}}{2\bar{\gamma}+d}>\frac{1}{2},
\end{equation}
then the posterior distribution satisfies the semiparametric BvM theorem. Moreover, when $d=1$ the same result holds with Riemann-Liouville processes \eqref{eq:RL_process} with parameters $\bar{\beta}$ and $\bar{\gamma}$ on $b$ and $f$, respectively. 
\end{corollary}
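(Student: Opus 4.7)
The plan is to verify the conditions of Proposition~\ref{prop:gp_gen_f} for each of the two prior choices, treating the $b$ component exactly as in the proof of Corollary~\ref{cor:RL_F} and then carrying out an analogous analysis for the $f$ component, with an extra check of the bias condition $\sqrt n \varepsilon_n^b \varepsilon_n^f\to 0$.

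\textbf{Contraction rates.} First I would identify the contraction rates. Since the concentration function analysis in \cite{vandervaart2008,castillo2008} for the series prior \eqref{eq:series_prior} with truncation $J_{\bar\beta}$ and for the Riemann-Liouville prior with parameter $\bar\beta$ both yield the same solution of \eqref{eq:gaus_conc_rate} (up to logarithmic factors),
\[
\varepsilon_n^b \asymp n^{-(\beta\wedge\bar\beta)/(2\bar\beta+d)}(\log n)^{\kappa_b},
\]
and, by exactly the same computation with $(\beta,\bar\beta)$ replaced by $(\gamma,\bar\gamma)$,
\[
\varepsilon_n^f \asymp n^{-(\gamma\wedge\bar\gamma)/(2\bar\gamma+d)}(\log n)^{\kappa_f}.
\]
The quantitative $L^2$-posterior contraction for the exponentiated Gaussian density prior at this rate follows from Proposition~5 of \cite{castillo2015}, bridging the Hellinger rate from \cite{vandervaart2008} to the $L^2$-distance via the uniform-norm regularity of the underlying Gaussian process. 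Condition \eqref{EqBetaGamma} then is exactly equivalent to $\sqrt n\,\varepsilon_n^b\varepsilon_n^f\to 0$ (up to the logarithmic factors, which are absorbed by the strict inequality).

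\textbf{RKHS approximation.} Next I would verify \eqref{eq:RKHS_cond_f} separately in the two directions. The $b$-direction requires approximating $\xi_{\eta_0}^b=a_0\in C^\alpha$ in the RKHS $\H^b$ built from the series (or Riemann-Liouville) prior of regularity $\bar\beta$; this is exactly the computation already carried out in the proof of Corollary~\ref{cor:RL_F}, leading to the constraint $d/2<\bar\beta<\alpha+\beta-d/2$ in combination with $\alpha,\beta>d/2$. The $f$-direction requires approximating $\xi_{\eta_0}^f=b_0-\chi(\eta_0)\in C^\beta$ in the RKHS $\H^f$ of regularity $\bar\gamma$; structurally this is the same computation with the substitution $\alpha\rightsquigarrow\beta$ and $\bar\beta\rightsquigarrow\bar\gamma$, yielding the analogous constraint on $(\beta,\gamma,\bar\gamma)$ stated in the corollary.

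\textbf{Remaining conditions and conclusion.} Condition \eqref{eq:prob_sup}, i.e.\ the uniform empirical process control for $b$, is verified by Lemma~\ref{lem:sup_series} (for the series prior) or Lemma~\ref{lem:sup_RL} (for Riemann-Liouville), exactly as in the proof of Corollary~\ref{cor:RL_F}, provided $\beta\wedge\bar\beta>d/2$. The shifted posterior concentration $\Pi(\eta^b\in(\mH_n^b-t\xi_n^b/\sqrt n)\mid X^{(n)})\to^{P_0}1$ follows from a Cameron--Martin change of measure together with the standard contraction lemma (as used already inside the proof of Proposition~\ref{prop:gp_gen_F} and applied in Corollary~\ref{cor:RL_F}), since $\xi_n^b\in\H^b$ and the shift $t\xi_n^b/\sqrt n$ is negligible on the concentration set. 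With all conditions verified, Proposition~\ref{prop:gp_gen_f} yields the semiparametric BvM.

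\textbf{Main obstacle.} The routine bookkeeping is tedious but follows templates already established. The genuinely delicate step is the $L^2$ rate for $f$: the concentration function approach of \cite{vandervaart2008} natively delivers a Hellinger rate, while \eqref{eq:no_bias_f} and \eqref{EqBetaGamma} require an $L^2$ bound. Handling this carefully, especially for the unbounded Riemann-Liouville prior on $\eta^f$, is where one must invoke the regularity-bootstrap argument of Proposition~5 of \cite{castillo2015} (controlling the $L^\infty$-norm of posterior draws on a high-probability event so that Hellinger and $L^2$-distances are comparable). Once this is in place the rest of the verification is essentially a parallel duplication of the $b$-analysis from Corollary~\ref{cor:RL_F}.
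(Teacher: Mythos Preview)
Your proposal is correct and follows essentially the same route as the paper's proof: verify the hypotheses of Proposition~\ref{prop:gp_gen_f} by (i) reading off the contraction rates for $b$ and $f$ from the concentration-function computations and upgrading the Hellinger rate for $f$ to $L^2$ via the regularity-bootstrap of Proposition~5 in \cite{castillo2015} (stated here as Lemma~\ref{lem:hell_L2_rate}), (ii) recycling the RKHS approximation for the $b$-direction from Corollary~\ref{cor:RL_F} and doing the parallel computation for $\xi_{\eta_0}^f=b_0-\chi(\eta_0)\in C^\beta$, and (iii) invoking Lemma~\ref{lem:sup_series} or Lemma~\ref{lem:sup_RL} for the empirical-process and shifted-concentration conditions. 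Your identification of the Hellinger-to-$L^2$ step as the only genuinely delicate point is also in line with the paper's treatment.
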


\begin{proof}%[Proof of Corollary~\ref{cor:RL_f}]
We verify the conditions of Proposition~\ref{prop:gp_gen_f}.

\textbf{Series prior} \eqref{eq:series_prior}: Using the form of the concentration
function in the proof of Theorem~4.5 of \cite{vandervaart2008}, we see that \eqref{eq:gaus_conc_rate} is satisfied for
$$\varepsilon_n^b = n^{-\frac{\beta\wedge \bar{\beta}}{2\bar{\beta}+d}} \log n,
\qquad 
\varepsilon_n^f = n^{-\frac{\gamma\wedge \bar{\gamma}}{2\bar{\gamma}+d}} \log n.$$
This yields a posterior contraction rate $\varepsilon_n^f$ for $f$ in Hellinger distance by 
Lemma~\ref{lem:contrac_prod_f}. Since the Gaussian process \eqref{eq:series_prior} is a finite sum of $S$-regular Daubechies wavelets, it takes values in $C^\delta$ for all $\delta<S$ almost surely, so that we may apply Lemma \ref{lem:hell_L2_rate}. In particular, for $\gamma >d/2$ (and taking $S>\max(\gamma,d/2)$ by assumption) there exists a choice of $K_n \to \infty$ satisfying \eqref{eq:hell_L2_rate}, from which we deduce that $\varepsilon_n^f$ is also a contraction rate for $f$ in $L^2$. By \eqref{EqBetaGamma} the given sequences satisfy 
$\sqrt{n}\varepsilon_n^b \varepsilon_n^f \rightarrow 0$. 
Condition~\eqref{eq:prob_sup} is verified in Lemma~\ref{lem:sup_series},
under the assumption that $\beta \wedge \bar{\beta} >d/2$.

It thus remains to establish \eqref{eq:RKHS_cond_f}, the approximation by elements of the RKHS. The $b$-term was considered in detail in the proof of Corollary \ref{cor:RL_F}, where it was shown that \eqref{eq:RKHS_cond_f} with $i=b$ holds if $\beta\wedge\bar{\beta}>[\bar{\beta}-\alpha+d/2]\vee 0$. Together with the condition $\beta \wedge \bar{\beta}>d/2$ needed to verify \eqref{eq:prob_sup} above, this is equivalent to $\alpha,\beta>d/2$ and $d/2 < \bar{\beta}<\alpha+\beta-d/2$.

The same argument for $f$, with  $\xi_{\eta_0}^f = b_0 - \chi(\eta_0)\in C^\beta$, gives the similar requirement that $\gamma \wedge \bar{\gamma} > [\bar{\gamma}-\beta+d/2] \vee 0$. Together with the condition $\gamma>d/2$ needed to apply Lemma \ref{lem:hell_L2_rate} above, this is implied by $\beta,\gamma>d/2$ and $d/2<\bar{\gamma}<\gamma+\beta-d/2$.

\textbf{Riemann-Liouville prior} \eqref{eq:RL_process}: 
The proof follows in much the same way. Using the form of the concentration
function in Theorem~4 of Castillo \cite{castillo2008}, we see that \eqref{eq:gaus_conc_rate} is satisfied for
$$\varepsilon_n^b = n^{-\frac{\beta\wedge \bar{\beta}}{2\bar{\beta}+1}} (\log n)^\kappa,
\qquad 
\varepsilon_n^f = n^{-\frac{\gamma\wedge \bar{\gamma}}{2\bar{\gamma}+1}} (\log n)^{\kappa'},$$
where  $\kappa$ is function of $(\beta,\bar\beta)$ and 
$\kappa'$ is the same function of $(\gamma,\bar\gamma)$, given explicitly in \cite{castillo2008}.
%where $\kappa=9$ if $\bar{\beta} \leq \beta$, $\bar{\beta}-\lfloor \bar{\beta} \rfloor =1/2$ or
%  $\bar{\beta} \not\in \beta+1/2+\mathbb{N}$ and
%  $\kappa = \tfrac{\beta\wedge \bar{\beta}}{2\bar{\beta}+1}$ otherwise. 
This yields a posterior contraction rate $\varepsilon_n^f$ for $f$ in Hellinger distance by 
Lemma~\ref{lem:contrac_prod_f}. Since a Riemann-Liouville process with parameter $\bar{\gamma}$ takes
values in $C^\delta$ for all $\delta <\bar{\gamma}$ \cite{lifshits2005}, and the same property
holds when adding a polynomial part in \eqref{eq:RL_process}, we can apply Lemma \ref{lem:hell_L2_rate}. In particular, for $\gamma \wedge \bar{\gamma}>1/2$ there exists a choice of $K_n \to \infty$ satisfying \eqref{eq:hell_L2_rate} so that we deduce that $\varepsilon_n^f$ is also a contraction rate for $f$ in $L^2$. By \eqref{EqBetaGamma} the given sequences satisfy 
$\sqrt{n}\varepsilon_n^b \varepsilon_n^f \rightarrow 0$. 
Condition~\eqref{eq:prob_sup} is verified in Lemma~\ref{lem:sup_RL},
under the assumption that $\beta \wedge \bar{\beta} >1/2$. 

It thus remains to establish \eqref{eq:RKHS_cond_f}. The $b$-term was considered in detail in the proof of Corollary \ref{cor:RL_F}, where it was shown that \eqref{eq:RKHS_cond_f} with $i=b$ holds if $\beta\wedge\bar{\beta}>[\bar{\beta}-\alpha+1/2]\vee 0$. Together with the condition $\beta \wedge \bar{\beta}>1/2$ needed to verify \eqref{eq:prob_sup} above, this is equivalent to $\alpha,\beta>1/2$ and $1/2 < \bar{\beta}<\alpha+\beta-1/2$.

The same argument for $f$, with  $\xi_{\eta_0}^f = b_0 - \chi(\eta_0)\in C^\beta$, gives the similar requirement that $\gamma \wedge \bar{\gamma} > [\bar{\gamma}-\beta+1/2] \vee 0$. Together with the condition $\gamma\wedge \bar{\gamma}>1/2$ needed to apply Lemma \ref{lem:hell_L2_rate} above, this is implied by $\beta,\gamma>1/2$ and $1/2<\bar{\gamma}<\gamma+\beta-1/2$.
\end{proof}

The corollary suggests that modelling the density $f$ using a Gaussian process prior works well 
under smoothness conditions on $f$. If $\bar{\beta}=\beta$ and $\bar{\gamma}=\gamma$, so that the prior processes
select the correct smoothness, the conditions in the corollary reduce to $\alpha,\beta,\gamma>d/2$, although it is known that no smoothness of $f$ is required to estimate the functional
$\chi(\eta)$, for example in Corollaries \ref{cor:RL_F} and \ref{cor:RL_dep_F}. The condition $\gamma>d/2$ can be problematic for even moderate
dimensions. The prior for the  parameter $f$ can therefore have a significant impact
and must be carefully chosen.

\section{Technical results}
\label{SectionTechnicalResults}

In this section we present technical results that are used in the proofs of the main results.

The following lemma controls changes in the likelihood under perturbations of $\eta^b$. 

\begin{lemma}\label{lem:lik_b_approx} 
For bounded functions $\xi_n$ and $\xi_0$, $t\in\RR$, a set $A_n$ of measurable functions, some $w_n>0$ 
and $\varepsilon_n, \zeta_n \rightarrow 0$, suppose that 
$$\|\xi_n\|_\infty \leq w_n,\qquad \|\xi_n-\xi_0\|_{L^2(F_0)} \leq \zeta_n,\qquad
\sup_{\eta^b\in A_n}\|\Psi(\eta^b)-\Psi(\eta_0^b)\|_{L^2(F_0)} \leq \varepsilon_n.$$
If %$w_n\zeta_n\rightarrow 0$, 
$n^{-1/2}w_n \rightarrow 0$, $\sqrt{n} \zeta_n \varepsilon_n \rightarrow 0$ and,
for $b_0=\Psi(\eta_0^b)$,
\begin{equation}\label{eq:sup_lem_lik_b_approx}
(1+w_n)\, \sup_{b=\Psi(\eta^b): \eta^b\in A_n} \bigl|\G_n[b-b_0]\bigr|= o_{P_0}(1),
\end{equation}
then
\begin{equation*}
\sup_{\eta^b\in A_n} \left| \ell_n^b(\eta^b - \tfrac{t}{\sqrt{n}} \xi_n) -\ell_n^b(\eta^b - \tfrac{t}{\sqrt{n}} \xi_0) \right| = o_{P_0}(1).
\end{equation*}
\end{lemma}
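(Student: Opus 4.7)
Let $h_n := \xi_n - \xi_0$, so that $\|h_n\|_\infty \le 2w_n$ and $\|h_n\|_{L^2(F_0)} \le \zeta_n$. The natural approach is to compare the two likelihoods by a second-order Taylor expansion along the linear path
\[
\tilde\eta_s := \eta^b - \tfrac{t}{\sqrt n}\xi_0 - \tfrac{ts}{\sqrt n}h_n,\qquad s\in[0,1],
\]
which interpolates between the two arguments. Using the explicit form $\log p^b_{\eta^b}(x)=r[y\eta^b(z)-\log(1+e^{\eta^b(z)})]$, the first and second derivatives of $\ell_n^b$ along this path are $-t\sqrt n \P_n[R(Y-\Psi(\tilde\eta_s))h_n]$ and $-t^2\P_n[R\Psi'(\tilde\eta_s)h_n^2]$, yielding
\[
\ell_n^b(\tilde\eta_1)-\ell_n^b(\tilde\eta_0)
= -t\sqrt n\,\P_n\bigl[R(Y-\tilde b_0)h_n\bigr]
- \tfrac{t^2}{2}\P_n\bigl[R\Psi'(\tilde\eta_{s^*})h_n^2\bigr],
\]
with $\tilde b_0:=\Psi(\eta^b-\tfrac{t}{\sqrt n}\xi_0)$ and some intermediate $s^*\in[0,1]$.

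The quadratic remainder is harmless: $|\Psi'|\le 1/4$ and the decomposition $\P_n h_n^2=F_0 h_n^2+n^{-1/2}\G_n h_n^2$ together with the variance bound $\mathrm{Var}(h_n^2)\le \|h_n\|_\infty^2 F_0 h_n^2\le 4w_n^2\zeta_n^2$ yield $\P_n h_n^2=O(\zeta_n^2)+O_{P_0}(w_n\zeta_n/\sqrt n)=o_{P_0}(1)$, since $\zeta_n\to 0$ and $w_n/\sqrt n\to 0$. For the first-order term I would split $-t\sqrt n\,\P_n[R(Y-\tilde b_0)h_n]$ into a bias part $-t\sqrt n\,P_0[R(Y-\tilde b_0)h_n]$ and a stochastic part $-t\,\G_n[R(Y-\tilde b_0)h_n]$. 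The bias piece equals $-t\sqrt n\int a_0^{-1}(b_0-\tilde b_0)h_n\,dF_0$ by the conditional independence of $R$ and $Y$ given $Z$; using $\|\tilde b_0-b_0\|_{L^2(F_0)}\le \varepsilon_n + \tfrac{|t|}{4\sqrt n}\|\xi_0\|_{L^2(F_0)}$ and Cauchy--Schwarz, this is $O(|t|(\sqrt n\varepsilon_n\zeta_n+\zeta_n))=o(1)$ uniformly in $\eta^b\in A_n$.

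For the stochastic piece I would further split $R(Y-\tilde b_0)h_n = R(Y-b_0)h_n + R(b_0-b)h_n + R(b-\tilde b_0)h_n$ where $b=\Psi(\eta^b)$. The first summand is $\eta^b$-free and mean-zero, so its empirical process is $O_{P_0}(\zeta_n)$. The third satisfies $|R(b-\tilde b_0)h_n|\le C|h_n|/\sqrt n$ uniformly (since $|b-\tilde b_0|\lesssim 1/\sqrt n$ by the Lipschitz property of $\Psi$ and boundedness of $\xi_0$), so, after writing $b-\tilde b_0=(b-b_0)+(b_0-\tilde b_0)$, it is absorbed either into the previous deterministic bound or into the uniform term below.

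The middle term $\sup_{\eta^b\in A_n}|\G_n[Rh_n(b-b_0)]|$ is the main obstacle, and the role of the multiplier $(1+w_n)$ in hypothesis \eqref{eq:sup_lem_lik_b_approx} becomes clear here: by standard symmetrization followed by the Ledoux--Talagrand contraction principle applied to the scalar factor $Rh_n$, whose modulus is bounded by $\|h_n\|_\infty\le 2w_n$,
\[
\E\sup_{\eta^b\in A_n}\bigl|\G_n[Rh_n(b-b_0)]\bigr|
\;\lesssim\; w_n\,\E\sup_{\eta^b\in A_n}\bigl|\G_n[b-b_0]\bigr|,
\]
and the right side tends to zero thanks to \eqref{eq:sup_lem_lik_b_approx}, after upgrading the in-probability statement to one on expectations via Lemma~\ref{lem:prob_to_expect}. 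Combining the bias and stochastic bounds completes the proof, with step 4 being the crucial point where the $(1+w_n)$-reinforcement of the uniform empirical-process hypothesis is exactly what is needed.
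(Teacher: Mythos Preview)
Your approach is essentially the paper's: Taylor-expand the log-likelihood difference, split the linear part into a bias term (controlled by $\sqrt n\,\varepsilon_n\zeta_n$) and a stochastic part whose heart is $\sup_{\eta^b\in A_n}|\G_n[Rh_n(b-b_0)]|$, handled by symmetrization and contraction. The paper expands both $\varphi(\eta^b-t\xi_n/\sqrt n)$ and $\varphi(\eta^b-t\xi_0/\sqrt n)$ around $\eta^b$ to third order rather than interpolating between them; this avoids your ``term~3'' $\G_n[R(b-\tilde b_0)h_n]$ (whose treatment via $(b-b_0)+(b_0-\tilde b_0)$ is muddled---the direct crude bound $|\G_n f|\le\sqrt n(\P_n+P_0)|f|\lesssim\P_n|h_n|+P_0|h_n|=o_{P_0}(1)$, using $|b-\tilde b_0|\le C/\sqrt n$ uniformly, is what actually works), but in exchange produces a more involved quadratic term $\propto\P_n[rb(1-b)(\xi_n^2-\xi_0^2)]$ requiring Lemma~\ref{LemmaGCnPreservation}, whereas your $\P_n h_n^2$ is immediate.

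Two small repairs. First, $\|h_n\|_\infty\lesssim 1+w_n$, not $2w_n$: the lemma only assumes $\|\xi_n\|_\infty\le w_n$ while $\xi_0$ is merely bounded. Second, and more substantive, naive symmetrization--contraction--desymmetrization on $\{Rh_n(b-b_0)\}$ leaves a residual of order $\|h_n\|_\infty\sup_{\eta^b}|F_0(b-b_0)|\lesssim(1+w_n)\varepsilon_n$ from the desymmetrization step, because the functions $b-b_0$ are not $P_0$-centered; this is \emph{not} controlled by the hypotheses (take $w_n=\log n$, $\varepsilon_n=1/\log n$). The fix is to center first: write $Rh_n(b-b_0)=Rh_n\bigl[(b-b_0)-F_0(b-b_0)\bigr]+F_0(b-b_0)\cdot Rh_n$, so the second summand contributes only $\sup|F_0(b-b_0)|\cdot|\G_n[Rh_n]|\lesssim\varepsilon_n\zeta_n\to 0$, and symmetrization--contraction on the first (now centered) class yields exactly your claimed bound. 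This is precisely the content of the paper's Lemma~\ref{lem:emp_proc_change}.
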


\begin{proof}
The part $\ell_n^b$ of the full log-likelihood \eqref{EqLogLikelihood} involving only the terms 
$b=\Psi(\eta^b)$ equals
\begin{align*}
\ell_n^b (\eta^b) & = \sum_{i=1}^n \Bigl[R_i Y_i  \log \frac{e^{\eta^b(Z_i)}}{1+e^{\eta^b(Z_i)}} 
+ R_i (1-Y_i)  \log \frac{1}{1+e^{\eta^b(Z_i)}}\Bigr]\\
& = n\P_n\bigl[ry \eta^b -r \varphi(\eta^b)\bigr],
\end{align*}
where $\varphi (\eta) =\log (1+e^{\eta})$. 
We apply this with $\eta^b$ equal to $\eta_{n,t}:=\eta^b - {t}\xi_n/{\sqrt{n}} $ and 
$\eta_t:=\eta^b - {t} \xi_0/{\sqrt{n}}$, take the difference
and Taylor expand $\varphi(\eta_{n,t})$ and $\varphi(\eta_t)$ about $\eta^b$ to third order.
Since $\varphi' = \Psi$, $\varphi '' =\Psi(1-\Psi)$ and $\varphi''' = \Psi(1-\Psi)(1-2\Psi)$, 
we have that $\varphi'(\eta^b)=b$ and $\varphi''(\eta^b)=b(1-b)$ and
the third derivative is uniformly bounded. Consequently,
\begin{align*}
&\ell_n^b (\eta_{n,t}) - \ell_n^b(\eta_t) 
 = n\P_n\bigr[ry (\eta_{n,t} - \eta_t)-r (\varphi (\eta_{n,t}) - \varphi(\eta_t))\bigr]\\
&= n\P_n\bigl[r(y-b) (\eta_{n,t} - \eta_t)\bigr]
- \frac{n}{2} \P_n\bigl[ r b(1-b)\bigl((\eta_{n,t}-\eta^b)^2-(\eta_t-\eta^b)^2\bigr)] +{\rm R},
\end{align*}
where 
$$|{\rm R}|\lesssim n\P_n \bigl[r|\eta_{n,t}-\eta^b|^3 +r|\eta_t-\eta^b|^3\bigr]
%\lesssim \frac1{\sqrt n}\P_n(|\xi_n|^3+|\xi_0|^3)
\lesssim \frac{w_n+1}{\sqrt n}\P_n(|\xi_n|^2+|\xi_0|^2).$$
The last expression is  $O_{P_0}((w_n+1)/\sqrt n)$ and 
tends to zero by assumption. The first term on the right of the preceding display can be rewritten as
$$t\G_n[r(y-b_0)(\xi_0-\xi_n)]+ t\G_n[r(b_0-b)(\xi_0-\xi_n)] + t\sqrt{n}P_{\eta_0}[r(y-b)(\xi_0-\xi_n)].$$
Here the first term tends to zero in probability, since $\xi_n\rightarrow \xi_0$ in $L^2(F_0)$,
the second tends to zero uniformly in $b\in B_n:=\bigl\{b=\Psi(\eta^b): \eta^b\in A_n\bigr\}$ 
by assumption \eqref{eq:sup_lem_lik_b_approx} and Lemma~\ref{lem:emp_proc_change} 
applied with $\varphi=r(\xi_0-\xi_n)$, which satisfies $\|\varphi\|_\infty =O(1+w_n)$. By computing the expectation
by first conditioning on $z$, next bounding out $a_0^{-1}(z)=\E (R\given Z=z)$ and next applying 
the Cauchy-Schwarz inequality, the third term is bounded above in absolute value by
$|t|\|1/a_0\|_\infty\sqrt{n} \|b-b_0\|_{L^2(F_0)} \|\xi_n-\xi_0\|_{L^2(F_0)} \lesssim \sqrt{n}\varepsilon_n \zeta_n$, 
which also tends to zero by assumption.
The second, quadratic term of the expansion $\ell_n^b (\eta_{n,t}) - \ell_n^b(\eta_t)$ can be rewritten as
$$-\frac{t^2}{2} (\P_n-P_{\eta_0}) [rb(1-b)(\xi_n^2-\xi_0^2)] - \frac{t^2}{2}P_{\eta_0}[rb(1-b)(\xi_n^2-\xi_0^2)].$$
The second term is bounded above in absolute value by
$t^2P_0|\xi_n^2-\xi_0^2|\rightarrow 0$, as $\xi_n\rightarrow \xi_0$ in $L^2(F_0)$.
The first term tends to zero, uniformly in $b\in B_n$ by \eqref{eq:sup_lem_lik_b_approx} and
Lemma~\ref{LemmaGCnPreservation} applied with the classes of functions $\{r\}$, $B_n$ and
$\{\xi_n^2-\xi_0^2\}$ and the continuous function $(r,b,a)\mapsto rb(1-b)a$.  
%(Note that $\xi_n^2\rightarrow \xi_0^2$ in $L_1(F_0)$, as $\xi_n\rightarrow \xi_0$ in $L^2(F_0)$ by assumption,
%so that $(\P_n-P_{\eta_0}) (\xi_n^2-\xi_0^2)\rightarrow 0$.) 
\end{proof}

\begin{lemma}
[Lemma~1 of \cite{ghosal2007}]
\label{lem:small_prior_prob}
%Let $B_{KL}({\eta_0},\varepsilon_n) := \{\eta: K(p_{\eta_0},p_\eta) \leq \varepsilon_n^2, V(p_{\eta_0},p_\eta)\leq \varepsilon_n^2\}$. 
If $B_n$ are measurable sets such that 
$\Pi(\eta\in B_n)/\Pi\bigl(\eta: K\vee V(p_{\eta_0},p_\eta) \leq \varepsilon_n^2\bigr) = o(e^{-2n\varepsilon_n^2})$, 
then $P_0 \Pi(\eta\in B_n|X^{(n)}) \rightarrow 0$. 
\end{lemma}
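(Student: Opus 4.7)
The plan is to use the standard ratio decomposition of the posterior and control the numerator and denominator separately. Writing $L_n(\eta) := \prodin (p_\eta/p_{\eta_0})(X_i)$ for the likelihood ratio,
$$\Pi(\eta\in B_n\given X^{(n)}) = \frac{\int_{B_n} L_n(\eta)\, d\Pi(\eta)}{\int L_n(\eta)\, d\Pi(\eta)},$$
and the idea is to bound the numerator from above in $P_0$-probability and the denominator from below.

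For the numerator, Fubini gives $\E_0 \int_{B_n} L_n\, d\Pi = \Pi(B_n)$, so Markov's inequality shows that, for any sequence $t_n>0$, the numerator is at most $t_n$ except on a $P_0$-event of probability $\Pi(B_n)/t_n$. For the denominator, the key substantive input I would invoke is the standard evidence lower bound (Lemma~8.1 of Ghosal, Ghosh and van der Vaart 2000, which is the engine of the Ghosal--van der Vaart 2007 reference cited): setting $\widetilde{B}_n := \{\eta: K(p_{\eta_0},p_\eta) \vee V(p_{\eta_0},p_\eta) \leq \varepsilon_n^2\}$, one has
$$P_0 \Bigl(\textstyle\int L_n\, d\Pi \leq \Pi(\widetilde{B}_n)\, e^{-2n\varepsilon_n^2}\Bigr) \leq \frac{1}{n\varepsilon_n^2} \longrightarrow 0.$$
The derivation of this bound is the only nontrivial step: restrict and renormalise $\Pi$ to $\widetilde{B}_n$, apply Jensen to pull $\log$ inside to turn $\log\int L_n\,d\Pi$ into $\int \log L_n\,d\Pi$, and then control the centred sum of log-ratios by Chebyshev, exploiting that on $\widetilde{B}_n$ the mean and variance of each $\log(p_\eta/p_{\eta_0})(X_i)$ are uniformly bounded by $\varepsilon_n^2$.

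To combine, the hypothesis supplies a sequence $\omega_n\downarrow 0$ with $\Pi(B_n)\le \omega_n\,\Pi(\widetilde{B}_n)\, e^{-2n\varepsilon_n^2}$. Taking $t_n := \sqrt{\omega_n}\,\Pi(\widetilde{B}_n)\, e^{-2n\varepsilon_n^2}$ balances the two bounds: the Markov bound contributes a failure probability $\Pi(B_n)/t_n = \sqrt{\omega_n}\to 0$, while on the intersection of the two high-probability events the posterior ratio is at most $t_n\, e^{2n\varepsilon_n^2}/\Pi(\widetilde{B}_n) = \sqrt{\omega_n}\to 0$. Hence $\Pi(B_n\given X^{(n)})\to 0$ in $P_0$-probability. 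The only potential obstacle is the evidence lower bound itself; everything else is routine algebra once that is in hand, so in practice one simply cites Ghosal--van der Vaart.
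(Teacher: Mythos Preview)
The paper does not prove this lemma; it simply cites it as Lemma~1 of Ghosal--van der Vaart (2007) and moves on. Your sketch is exactly the standard argument behind that cited result: bound the numerator via Fubini/Markov (expected value $\Pi(B_n)$), bound the denominator via the evidence lower bound (Jensen plus Chebyshev on the KL ball $\widetilde B_n$), then balance with $t_n=\sqrt{\omega_n}\,\Pi(\widetilde B_n)e^{-2n\varepsilon_n^2}$. This is correct and matches the proof in the cited reference.

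One small point worth making explicit: your denominator step uses $P_0(\int L_n\,d\Pi\le \Pi(\widetilde B_n)e^{-2n\varepsilon_n^2})\le 1/(n\varepsilon_n^2)\to 0$, which tacitly assumes $n\varepsilon_n^2\to\infty$. That is the standing assumption throughout the paper (all rates $\varepsilon_n^b$ arise from \eqref{eq:gaus_conc_rate} and satisfy it), so there is no gap, but it is worth flagging since the lemma as stated does not mention it.
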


Since all contraction rates in the paper are established using the testing approach of Ghosal et
al. \cite{ghosal2000}, establishing a contraction rate $\varepsilon_n$ automatically involves
proving a lower bound of the form $\Pi (B_{KL}({\eta_0},\varepsilon_n)) \geq
e^{-Cn\varepsilon_n^2}$. The condition of the lemma
is therefore satisfied if $\Pi(\eta\in B_n)\le e^{-Ln\varepsilon_n^2}$ for sufficiently large $L$.

In the case of a product prior on the parameters $(a,b)$ or $(a,b,f)$ and sets $B_n$ that refer
to only one of the parameters $a, b, f$, both the numerator and denominator in the condition
of the lemma factorize. Consequently, one may use the rate obtained from lower bounding the small-ball
probability for that parameter, rather than the worst rate $\max (\varepsilon_n^a,\varepsilon_n^b)$ or
$\max (\varepsilon_n^a,\varepsilon_n^b,\varepsilon_n^f)$. For instance if we consider the product
prior \eqref{eq:prior_bf}-\eqref{eq:prior_f} and $B_n$ involves only conditions concerning $b$, then
we use the rate $\varepsilon_n^b$ in Lemma~\ref{lem:small_prior_prob}.

These two remarks will be used throughout without further mention.

\begin{lemma}\label{lem:sup_RL}
Let $\Pi$ be the Gaussian process prior $\Psi(R^{\bar\beta})$ on $b$,
for $R^{\bar\beta}$ the Riemann-Liouville process given in \eqref{eq:RL_process}. 
If $b_0 \in C^\beta$ and $\beta \wedge \bar{\beta}>1/2$, then there exist sets $\mH_n^b\subset C[0,1]$ such that 
%$\Pi(\mH_n^b|X^{(n)}) \rightarrow^{P_0} 1$, and  
$\Pi(\eta^b\in \mH_n^b -t\xi_n^b/\sqrt{n} |X^{(n)})\rightarrow^{P_0} 1$ for every $t\in \R$ and every $\xi_n^b$ 
satisfying \eqref{eq:RKHS_cond_b}, and such that \eqref{eq:prob_sup2} holds.
%with the supremum being measurable.
\end{lemma}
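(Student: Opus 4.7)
The plan is to construct $\mH_n^b$ as the intersection of a Borell--Sudakov--Tsirelson sieve with an $L^2(F_0)$--contraction neighbourhood for $b$, and then to verify the posterior--mass, shift--invariance and uniform empirical--process conditions separately. Let $\H_1^b$ denote the unit ball of the RKHS of the Riemann--Liouville process (essentially the Sobolev space of order $\bar{\beta}+1/2$, up to polynomial corrections) and $\mathbb{B}$ the unit ball of $C[0,1]$. Set
\begin{equation*}
\mathcal{B}_n=M_n\H_1^b+\varepsilon_n^b\mathbb{B},\qquad \mH_n^b=\mathcal{B}_n\cap\bigl\{\eta^b:\|\Psi(\eta^b)-b_0\|_{L^2(F_0)}\le C\varepsilon_n^b\bigr\},
\end{equation*}
with $M_n=c_0\sqrt{n}\varepsilon_n^b(\log n)^{\kappa'}$ for large constants $C,c_0,\kappa'$. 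Borell's inequality applied with the concentration function of the Riemann--Liouville process from \cite{castillo2008} bounds the prior mass of $\mathcal{B}_n^c$ by $e^{-Kn(\varepsilon_n^b)^2}$ with $K$ arbitrarily large, so Lemma~\ref{lem:small_prior_prob} gives $\Pi(\mathcal{B}_n^c\given X^{(n)})\rightarrow^{P_0}0$; combined with the $L^2(F_0)$--contraction rate $\varepsilon_n^b$ for $b$ from Lemma~\ref{lem:contrac_prod_b}, this yields $\Pi(\mH_n^b\given X^{(n)})\rightarrow^{P_0}1$.

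For the shift, the bounds in \eqref{eq:RKHS_cond_b} give $\|t\xi_n^b/\sqrt{n}\|_{\H^b}\le |t|\zeta_n^b=o(1)$ and $\|t\xi_n^b/\sqrt{n}\|_\infty\le |t|(\|a_0\|_\infty+\zeta_n^b)/\sqrt{n}=o(\varepsilon_n^b)$, using that $\xi_0=a_0$ is bounded. By Lipschitz continuity of $\Psi$ the $L^2(F_0)$--constraint is perturbed by $o(\varepsilon_n^b)$ and the sieve constraint by $o(1)$, both absorbed by doubling the constants $M_n$ and $C$ in $\mH_n^b$. Hence $\Pi(\mH_n^b-t\xi_n^b/\sqrt{n}\given X^{(n)})\rightarrow^{P_0}1$.

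The crux is \eqref{eq:prob_sup2}. The decisive observation is that since the RKHS is a Sobolev space, its unit ball has $L^2$--bracketing entropy $\log N_{[]}(\varepsilon,\H_1^b,L^2)\lesssim \varepsilon^{-2/(2\bar{\beta}+1)}$, strictly better than the sample--path Hölder $L^\infty$--entropy $\varepsilon^{-1/\bar{\beta}}$. Since $f_0$ is bounded and $\Psi$ is Lipschitz, and since the $\varepsilon_n^b\mathbb{B}$--summand of $\mathcal{B}_n$ can be absorbed into a single bracket at scales $\varepsilon\ge \varepsilon_n^b$, one obtains $\log N_{[]}(\varepsilon,\{\Psi(\eta^b)-b_0:\eta^b\in \mH_n^b\},L^2(F_0))\lesssim (M_n/\varepsilon)^{2/(2\bar{\beta}+1)}$ for $\varepsilon\gtrsim \varepsilon_n^b$. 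Plugging $M_n\asymp \sqrt{n}\varepsilon_n^b$ into the bracketing integral gives
\begin{equation*}
J_{[]}(C\varepsilon_n^b)\lesssim M_n^{1/(2\bar{\beta}+1)}(\varepsilon_n^b)^{2\bar{\beta}/(2\bar{\beta}+1)}\asymp n^{(1/2-(\beta\wedge\bar{\beta}))/(2\bar{\beta}+1)}(\log n)^{\kappa''},
\end{equation*}
which tends to zero precisely when $\beta\wedge\bar{\beta}>1/2$. Theorem~2.14.2 of \cite{vandervaart1996} together with Markov's inequality then delivers \eqref{eq:prob_sup2}.

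The main obstacle is exploiting the sharper Sobolev $L^2$--entropy of the RKHS rather than the coarser sample--path Hölder entropy: working in $L^\infty$ would require the strictly stronger condition $4\bar{\beta}(\beta\wedge\bar{\beta})>2\bar{\beta}+1$. Verifying the $L^2$--bracketing bound for the RKHS of the Riemann--Liouville process and controlling its push--forward under $\Psi$ form the bulk of the technical work, and the logarithmic corrections in the concentration function of \cite{castillo2008} just suffice for the sieve radius $M_n$ to be both large enough to contain the posterior mass and small enough for the chaining integral to converge.
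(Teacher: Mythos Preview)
Your sieve $\mathcal{B}_n=M_n\H_1^b+\varepsilon_n^b\mathbb{B}$ does not work for the empirical-process step. The summand $\varepsilon_n^b\mathbb{B}$, with $\mathbb{B}$ the unit ball of $C[0,1]$, is not totally bounded in $L^2(F_0)$, so $\log N_{[]}(\tau,\mF_n,L^2(P_0))=\infty$ for every $\tau<\varepsilon_n^b$. You acknowledge that your entropy bound $(M_n/\varepsilon)^{2/(2\bar\beta+1)}$ holds only for $\varepsilon\gtrsim\varepsilon_n^b$, but the bracketing integral in Theorem~2.14.2 of \cite{vandervaart1996} (and any standard chaining bound) runs from $0$; your computation of $J_{[]}(C\varepsilon_n^b)$ therefore uses the entropy bound on an interval where it is false, and the integral actually diverges. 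One cannot repair this by splitting $f=\Psi(h)+(\Psi(h+u)-\Psi(h))$ either: although the second piece is uniformly $O(\varepsilon_n^b)$, the supremum of $|\G_n|$ over an $\varepsilon_n^b$-$L^\infty$ ball is of order $\sqrt n\,\varepsilon_n^b$, which does not vanish.

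The paper avoids this by replacing the $C[0,1]$-ball with a \emph{compact} ball: it views $R^{\bar\beta}$ as a Gaussian element of $H^s$ for some $1/2<s<\bar\beta$ and takes the sieve $\gamma_n H_1^s+M\sqrt n\,\varepsilon_n^b H_1^{\bar\beta+1/2}$, where $\gamma_n$ is chosen from the $H^s$-small-ball exponent so that Borell's inequality still kills the complement. Both summands now have finite $L^\infty$-entropy, namely $(\gamma_n/\tau)^{1/s}$ and $(\sqrt n\,\varepsilon_n^b/\tau)^{1/(\bar\beta+1/2)}$, and the chaining bound can be integrated from $0$. Balancing the two regimes and choosing the cutoff $\delta_n$ appropriately then yields the condition $\beta\wedge\bar\beta>1/2$. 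Incidentally, your remark about $L^2$-entropy being sharper than $L^\infty$-entropy for the RKHS ball is not the point: by Birman--Solomjak the $L^\infty$- and $L^2$-metric entropies of $H_1^{\bar\beta+1/2}$ have the same exponent $\varepsilon^{-2/(2\bar\beta+1)}$, and the paper works in $L^\infty$ throughout. The real issue is making the \emph{second} summand of the sieve compact, which your construction does not do.
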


\begin{proof}
We shall show that the posterior distribution concentrates on a set of small bracketing entropy,
which will allow us to verify \eqref{eq:prob_sup2} by a standard maximal inequality for
the empirical process. 

The Riemann-Liouville process $R^{\bar{\beta}}$ can be viewed
as a Gaussian random element in the  Sobolev space $H^s$ for $s<\bar\beta$ and
its RKHS is equal to $H^{\bar{\beta}+1/2}$ by Theorem~4.2 of \cite{vandervaart2008}.
For $H_1^s$  the unit ball of $H^s$ and $1/2<s<\bar{\beta}$ to be specified below, we define
\begin{align*}
\mH_n^b = \bigl\{\gamma_n H_1^s + M\sqrt{n}\varepsilon_n^b H_1^{\bar{\beta}+1/2} \bigr\}
\bigcap \bigl\{\eta^b: \|\Psi(\eta^b)-b_0\|_{L^2(F_0)} \leq \varepsilon_n^b\bigr\},
\end{align*}
where $M$ is a large constant to be determined and 
\begin{equation}
\label{EqDefGammaEpsilon}
\gamma_n = n^{-\frac{2(\bar{\beta}+1/2-\beta\wedge\bar{\beta})(\bar{\beta}-s)}{2\bar{\beta}+1}}
(\log n)^{-2\kappa(\bar{\beta}-s)},
\qquad
\varepsilon_n^b\asymp n^{-\frac{\beta\wedge \bar{\beta}}{2\bar{\beta}+1}} (\log n)^\kappa.
\end{equation}
The rate $\varepsilon_n^b$ is the contraction rate of the posterior distribution of $b$.
This is given by \eqref{eq:gaus_conc_rate} in view of Lemma~\ref{lem:contrac_prod_b}, 
and shown to take the form as in the display in \cite{castillo2008} (see his Theorem~4), 
where the exponent $\kappa$ of the logarithmic
factor depends in a complicated manner on $(\beta,\bar\beta)$, but will not be of concern here. 
%For later use, we note that $\gamma_n^{-1/(2\bar{\beta}-2s)} = \sqrt{n}\varepsilon_n^b$.
It follows that the second set in the definition of $\mH_n^b$ has posterior mass tending to one
in probability, provided the proportionality constant in $\varepsilon_n^b$ is chosen large enough. 

By the Borell-Sudakov inequality (e.g.\ Theorem~5.1 in \cite{vandervaart2008b}),
the prior probability of the complement of the first part of $\mH_n^b$ satisfies 
\begin{align*}
\Pi \bigl[ R^{\bar{\beta}} \not\in \gamma_n H_1^s + M\sqrt{n}\varepsilon_n^b H_1^{\bar{\beta}+1/2} \bigr] 
\leq 1 - \Phi \Bigl[\Phi^{-1}\bigl[\Pi(\|R^{\bar{\beta}}\|_{H^s} \leq \gamma_n)\bigr]+M\sqrt{n}\varepsilon_n^b \Bigr].
\end{align*}
Reasoning as in Theorem~4.3.36 of \cite{gine2016}, we see that 
$\log N(H_1^{\bar{\beta}+1/2},\|\cdot\|_{H^s},\varepsilon) \leq K(\bar{\beta},s) \varepsilon^{-1/(\bar{\beta}+1/2-s)}$
for small $\varepsilon >0$. Therefore, by Theorem~1.2 of Li and Linde \cite{li1999},
the small ball exponent $-\log \Pi(\|R^{\bar{\beta}} \|_{H^s}\leq \gamma)$ is
bounded above by a multiple of $\gamma^{-1/(\bar{\beta}-s)}$ for small $\gamma$. 
Substituting this in the preceding display and using the bounds $\Phi^{-1}(y) \geq -\sqrt{2\log(1/y)}$ 
for $0<y<1$, and $1-\Phi(x) \leq e^{-x^2/2}$ for $x>0$, we see that the right
side of the display is  smaller than $e^{-Ln(\varepsilon_n^b)^2}$, where $L$ can be
made large by choosing $M$ large enough.
It then follows from Lemma~\ref{lem:small_prior_prob} that the posterior probability of the
first set in the definition of $\mH_n^b$  also tends to 1 in probability.

%Since $R_t^{\bar{\beta}}= \sum_{k=0}^{\lfloor \bar{\beta} \rfloor +1} g_k t^k + \int_0^t (t-s)^{\bar{\beta}-1/2}\, dW_s := U_t + V_t$, 
%the probability $P(\|R^{\bar{\beta}}\|_{H^s} \leq \gamma)$ is lower bounded by
%$P(\|U\|_{H^s}\leq \gamma/2)P(\|V\|_{H^s}\leq \gamma/2)$. Since $\|t^k\|_{H^s} <\infty$,
%one has $P(\|g_kt^k\|_{H^s} \leq \gamma) \geq C(\bar{\beta},s,k)\gamma$.

For $t\in \R$ and $\xi_n^b$ satisfying \eqref{eq:RKHS_cond_b}, since
$\|\xi_n^b\|_{H^{\bar{\beta}+1/2}} \leq \sqrt{n} \zeta_n^b$ and
$\|\Psi(\eta^b)-\Psi(\eta^b-t\xi_n^b/\sqrt n)\|_{L^2(F_0)}\le |t|\|\xi_n^b\|_\infty/\sqrt n$,
\begin{align*}
\mH_n^b - t\xi_n^b/\sqrt{n} & \supset \bigl\{\gamma_n H_1^s + (M\sqrt{n}\varepsilon_n^b - |t|\zeta_n^b) H_1^{\bar{\beta}+1/2})\bigr\} \\
&\quad \bigcap 
\bigl\{\eta^b: \|\Psi(\eta^b)-b_0\|_{L^2(F_0)}\leq \varepsilon_n^b - |t|(\|\xi_{\eta_0}^b\|_\infty + \zeta_n^b)/\sqrt{n}\bigr\}.
\end{align*}
Since $\sqrt n\varepsilon_n^b\rightarrow\infty$ and $\zeta_n^b \rightarrow 0$, this set hardly differs from the
set $\mH_n^b$. Its posterior probability is seen to tend to 1 in probability by the same arguments
as for $\mH_n^b$, possibly after replacing $\varepsilon_n^b$ with a multiple of itself.

%Both of the Sobolev balls in the first set defining $\mH_n^b$ are separable in their respective Sobolev norms and, since $s>1/2$, they are also separable in $L^\infty$ using the Sobolev embedding theorem. Since a subset of a separable metric space is itself separable, $\mH_n^b$ is separable in $L^\infty$. Since $\Psi$ is Lipschitz, the supremum in question is therefore attained over a countable set and is thus measurable. 

Finally we show  \eqref{eq:prob_sup2}, which is that
the supremum of the empirical process indexed by the class of functions
$\mF_n := \bigl\{ \Psi(\eta^b)-b_0: \eta^b \in \mH_n^b \bigr\}$ tends to zero. 
By Theorem~2.14.2 of \cite{vandervaart1996} applied with the constant function $1$  as envelope function,
 %Theorem~3.5.13 of \cite{gine2016}, yields that 
\begin{align*}
P_0 \sup_{f\in \mF_n} |\G_n f| 
& \lesssim \int_0^{\delta} \sqrt{1+\log N_{[]}(\mF_n, L^2(P_0),\tau)}\, d\tau \\
&\qquad\qquad+ \varepsilon_n^b \sqrt{1+\log N_{[]}(\mF_n,L^2(P_0),\delta)},
\end{align*}
for any $\delta >0$ such that $\sqrt n\delta> \sqrt{1+\log N_{[]}(\mF_n,L^2(P_0),\delta)}$ (so that
$\sqrt n a(\delta)$ as defined in Theorem~2.14.2 is bigger than the envelope 1).
It therefore remains to bound the above bracketing entropy and pick $\delta=\delta_n\rightarrow0$ 
such that all the terms converge to zero. 

Because $\Psi$ is monotone and Lipschitz, a set of $\tau$-brackets in $L^2(P_0)$ for $\mH_n^b$ 
transforms into a set of $\tau$-brackets in $L^2(P_0)$ for $\mF_n$. Furthermore, separate sets of brackets
for the two constituents of the set
$\gamma_n H_1^s + M\sqrt{n}\varepsilon_n^b H_1^{\bar{\beta}+1/2}$ 
can be combined into brackets for the sum space. 
By \cite{BirmanSolomjak} (or see \cite{vdVW2019}, Section~2.7.2),
\begin{equation}\label{eq:bracketing_entropy}
\begin{split}
\log N_{[]}(\mH_n^b,L^2(P_0),2\tau) 
& \leq \log N(\gamma_n H_1^s,\|\cdot\|_\infty,\tau) \\
&\qquad\qquad+ \log N(M\sqrt{n}\varepsilon_n^b H_1^{\bar{\beta}+1/2},\|\cdot\|_\infty,\tau) \\
& \lesssim (\gamma_n/\tau)^{1/s} + (\sqrt{n}\varepsilon_n^b/\tau)^{1/(\bar{\beta}+1/2)}.
\end{split}
\end{equation}
The first term dominates if 
$\tau \lesssim \delta_*:= n^{-\frac{\bar{\beta}+1/2-\beta\wedge\bar{\beta}}{\bar{\beta}+1/2-s}[\bar{\beta}-s + \frac{s}{2\bar{\beta}+1}]} (\log n)^{-2\kappa \bar{\beta}} \rightarrow 0$. 
%For $\tau \gtrsim \delta_*$, we thus have $a(\delta) \gtrsim (\sqrt{n}\varepsilon_n^b)^{-\frac{1}{2\bar{\beta}+1}} \delta^{1+\frac{1}{2\bar{\beta}+1}}$. 
For $\delta_n>0$ such that $\delta_*/\delta_n \rightarrow 0$, we therefore have
\begin{align*}
P_0 \sup_{f\in \mF_n} |\G_n f| 
& \lesssim \gamma_n^\frac{1}{2s}\! \int_0^{\delta_*}\!\! \tau^{-\frac{1}{2s}}\, d\tau 
+ (\sqrt{n} \varepsilon_n^b)^\frac{1}{2\bar{\beta}+1}\! \int_{\delta_*}^{\delta_n}\!\! \tau^{-\frac{1}{2\bar{\beta}+1}}\, d\tau 
+ \varepsilon_n^b \Bigl[\frac{\sqrt{n}\varepsilon_n^b}{\delta_n}\Bigr]^{\frac1{2\bar{\beta}+1}},
\end{align*}
provided $\sqrt{n}\delta_n \gtrsim  (\sqrt{n}\varepsilon_n^b/\delta_n)^{1/(2\bar{\beta}+1)}$.
The first integral on the right tends to zero, since
%is of the order $\gamma_n^{1/(2s)}\delta_*^{(2s-1)/(2s)} \rightarrow 0$,  since
$\gamma_n,\delta_*\rightarrow 0$ and $s>1/2$. 
Thus we must choose $\delta_n$ so that 
\begin{align*}
\delta_n&\gg \delta_*,\\
\delta_n&\ll\delta_{II}:=(\sqrt n\varepsilon_n^b)^{-1/(2\bar\beta)}= n^{-\frac{\bar{\beta}+1/2-\beta\wedge
  \bar{\beta}}{2\bar{\beta}(2\bar{\beta+1)}}} (\log n)^{-\kappa/(2\bar{\beta})},\\
\delta_n&\gg \delta_{IV} := (\varepsilon_n^b)^{2\bar\beta+1}\sqrt n\varepsilon_n^b
=n^\frac{\bar{\beta}+1/2-2(\bar{\beta}+1)(\beta \wedge \bar{\beta)}}{2\bar{\beta}+1}
(\log n)^{2\kappa(\bar{\beta}+1)},\\
\delta_n&\gg \delta_{III} :=n^{-\frac{2\bar{\beta}^2+\bar{\beta}+\beta \wedge \bar{\beta}}{2(\bar{\beta}+1)(2\bar{\beta}+1)}}(\log
n)^\frac{\kappa}{2(\bar{\beta}+1)}.
\end{align*}
The middle two restrictions arise from the second integral and the last term on the right
side of the preceding display, and the fourth is a strengthening of 
the restriction $\sqrt{n}\delta_n\gtrsim  (\sqrt{n}\varepsilon_n^b/\delta_n)^{1/(2\bar{\beta}+1)}$. 
Necessary and sufficient conditions for the four requirements  are: (i) $\delta_* = o(\delta_{II})$, (ii)
$\delta_{III}=o(\delta_{II})$ and (iii) $\delta_{IV} = o(\delta_{II})$. Substituting in the values
of $\delta_*,\delta_{II},\delta_{III},\delta_{IV}$, using that $1/2<s<\bar{\beta}$ and rearranging,
we find the necessary and sufficient conditions: (i) $\bar{\beta}>1/2$, (ii)
$\bar{\beta}>1/2$ and (iii) $\beta\wedge \bar{\beta} >1/2$. Thus under the condition
$\beta \wedge \bar{\beta}>1/2$, we can find a choice of $\delta_n$ such that all requirements are met.
\end{proof}

\begin{lemma}\label{lem:sup_series}
Let $\Pi$ be the Gaussian process prior $\Psi(W^b)$ on $b$ with $W^b$ the finite Gaussian series given in \eqref{eq:series_prior} with truncation parameter $J_{\bar{\beta}}$. If $b_0 \in C^\beta([0,1]^d)$ and $\beta \wedge \bar{\beta}>d/2$, then there exist sets $\mH_n^b\subset C([0,1]^d)$ such that 
%$\Pi(\mH_n^b|X^{(n)}) \rightarrow^{P_0} 1$, and  
$\Pi(\eta^b\in \mH_n^b -t\xi_n^b/\sqrt{n} |X^{(n)})\rightarrow^{P_0} 1$ for every $t\in \R$ and every $\xi_n^b$ 
satisfying \eqref{eq:RKHS_cond_b}, and such that \eqref{eq:prob_sup2} holds.
%with the supremum being measurable.
\end{lemma}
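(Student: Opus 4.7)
The plan is to adapt the proof of Lemma \ref{lem:sup_RL}, substantially simplified by the fact that the series prior \eqref{eq:series_prior} is supported on the finite-dimensional space $V_{J_{\bar\beta}} = \mathrm{span}(\psi_{jk}: j\le J_{\bar\beta}, k)$ of dimension $D \asymp 2^{J_{\bar\beta}d} \asymp n^{d/(2\bar\beta+d)}$, so no Borell--Sudakov decomposition into a ball in a weaker Sobolev norm is required. Writing $\varepsilon_n^b = n^{-(\beta\wedge\bar\beta)/(2\bar\beta+d)}\log n$ for the $L^2(F_0)$-contraction rate of $b$ supplied by Lemma \ref{lem:contrac_prod_b} (as in the proof of Corollary \ref{cor:RL_F}), I would take, for a sufficiently large constant $M$,
\begin{equation*}
\mH_n^b = M\sqrt{n}\varepsilon_n^b H_1^{\H^b} \cap \bigl\{\eta^b: \|\Psi(\eta^b)-b_0\|_{L^2(F_0)} \le \varepsilon_n^b\bigr\},
\end{equation*}
with $H_1^{\H^b}$ the unit ball of the RKHS \eqref{eq:RKHS_series}.

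The posterior mass of $\mH_n^b$ tends to one because the contraction part follows from Lemma \ref{lem:contrac_prod_b}, and because $\|W^b\|_{\H^b}^2 = \sum_{j\le J_{\bar\beta}}\sum_k g_{jk}^2 \sim \chi_D^2$ together with $n(\varepsilon_n^b)^2 / D \gtrsim (\log n)^2$ yields $\Pi(\|W^b\|_{\H^b} > M\sqrt n \varepsilon_n^b) \le e^{-cM^2 n(\varepsilon_n^b)^2}$ by Laurent--Massart for large $M$, which transfers to the posterior by Lemma \ref{lem:small_prior_prob}. The shifted statement $\Pi(\eta^b \in \mH_n^b - t\xi_n^b/\sqrt n \given X^{(n)}) \to^{P_0} 1$ for every $t\in\R$ then follows by applying the same argument to the slightly shrunken set $(M-1)\sqrt n \varepsilon_n^b H_1^{\H^b} \cap \{\|\Psi(\eta^b)-b_0\|_{L^2(F_0)} \le \varepsilon_n^b/2\}$: translating by $t\xi_n^b/\sqrt n$ perturbs the RKHS norm by at most $|t|\zeta_n^b = o(1)$ and, using $\|\xi_n^b\|_\infty \le \|a_0\|_\infty + \zeta_n^b = O(1)$, perturbs the $L^2(F_0)$-distance by $O(1/\sqrt n)$, so this shrunken set embeds into $\mH_n^b - t\xi_n^b/\sqrt n$ for $n$ large.

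The main step is to verify \eqref{eq:prob_sup2} for the class $\mF_n = \{\Psi(\eta^b)-b_0: \eta^b \in \mH_n^b\}$ via the bracketing CLT (Theorem 2.14.2 of \cite{vandervaart1996}). Since $\Psi$ is $1$-Lipschitz it suffices to bound $\|\cdot\|_\infty$-brackets of $M\sqrt n \varepsilon_n^b H_1^{\H^b} \subset V_{J_{\bar\beta}}$. The inverse Bernstein inequality $\|w\|_\infty \lesssim 2^{J_{\bar\beta}d/2}\|w\|_{L^2}$ on $V_{J_{\bar\beta}}$ together with $\|w\|_{L^2} \le \|w\|_{\H^b}$ and a volumetric $\tau$-net in $\R^D$ yield, uniformly for $\tau$ polynomial in $n^{-1}$,
\begin{equation*}
\log N_{[]}(\mF_n, L^2(F_0), \tau) \lesssim D\log\!\bigl(\sqrt n \varepsilon_n^b 2^{J_{\bar\beta}d/2}/\tau\bigr) \lesssim n^{d/(2\bar\beta+d)}\log(n/\tau).
\end{equation*}
Taking $\delta = \varepsilon_n^b$ and envelope $\equiv 1$ in Theorem 2.14.2, and noting that $\sqrt n \varepsilon_n^b \asymp n^{d/(2(2\bar\beta+d))}\log n$ verifies the side condition $\sqrt n \delta \gtrsim \sqrt{1+\log N_{[]}(\mF_n,\delta)}$, we obtain
\begin{equation*}
P_0 \sup_{f\in\mF_n}|\G_n f| \lesssim \varepsilon_n^b\sqrt{n^{d/(2\bar\beta+d)}\log n} \lesssim n^{(d/2-\beta\wedge\bar\beta)/(2\bar\beta+d)}(\log n)^{3/2},
\end{equation*}
which tends to zero exactly under the hypothesis $\beta\wedge\bar\beta > d/2$.

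The principal obstacle is the bracketing entropy step: one has to check that the inverse Bernstein factor $2^{J_{\bar\beta}d/2}$ enters only through a logarithm (so that it is swallowed by $\sqrt{D\log n}$) and that the resulting polynomial exponents line up to match exactly the threshold $\beta\wedge\bar\beta > d/2$; the Gaussian concentration and shift invariance reduce, once the RKHS ball is scaled at the correct level $M\sqrt n \varepsilon_n^b$, to the same one-line $\chi^2$-tail and Cameron--Martin-type arguments used for the Riemann--Liouville prior.
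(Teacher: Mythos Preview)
Your proposal is correct and follows essentially the same approach as the paper: define $\mH_n^b$ as the intersection of an RKHS ball of radius $M\sqrt n\varepsilon_n^b$ with an $L^2(F_0)$-contraction neighbourhood, establish the bracketing entropy bound $\log N_{[]}(\mF_n,L^2(P_0),\tau)\lesssim 2^{J_{\bar\beta}d}\log(n/\tau)$, and apply Theorem~2.14.2 of \cite{vandervaart1996} to obtain the final rate $n^{(d/2-\beta\wedge\bar\beta)/(2\bar\beta+d)}(\log n)^{3/2}$. The minor technical variations (a $\chi^2$/Laurent--Massart tail bound in place of Borell's inequality via duality; the inverse Bernstein inequality $\|w\|_\infty\lesssim 2^{J_{\bar\beta}d/2}\|w\|_{L^2}\le 2^{J_{\bar\beta}d/2}\|w\|_{\H^b}$ in place of the paper's direct Cauchy--Schwarz bound $\|w\|_\infty\lesssim\|w\|_{\H^b}$; and taking $\delta=\varepsilon_n^b$ rather than the smaller $\delta_n\asymp n^{-\bar\beta/(2\bar\beta+d)}\sqrt{\log n}$) all lead to the same conclusion, since the extra $2^{J_{\bar\beta}d/2}$ factor only enters the logarithm of the covering number. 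One small slip: your claim $\sqrt n\varepsilon_n^b\asymp n^{d/(2(2\bar\beta+d))}\log n$ holds as an equality only when $\bar\beta\le\beta$; in general $\sqrt n\varepsilon_n^b = n^{(\bar\beta+d/2-\beta\wedge\bar\beta)/(2\bar\beta+d)}\log n \gtrsim n^{d/(2(2\bar\beta+d))}\log n$, but the inequality is the correct direction for verifying the side condition, so the argument goes through.
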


\begin{proof}
As in Lemma \ref{lem:sup_RL}, we show that the posterior distribution concentrates on a set of small bracketing entropy,
which will allow us to verify \eqref{eq:prob_sup2} by a standard maximal inequality for the empirical process. Write $J = J_{\bar{\beta}}$ and define $V_J = \text{span}(\psi_{jk}: j\leq J, k)$, where $\text{dim}(V_J) = O(2^{Jd}) = O(n^{d/(2\bar{\beta}+d)})$. Recall that the RKHS of the Gaussian series prior \eqref{eq:series_prior} is
$$\H^b = \Bigl\{ w\in V_J: \|w\|_{\H^b}^2 := \sum_{j\leq J} \sum_k \sigma_j^{-2} |\langle w,\psi_{jk}\rangle_{L^2}|^2 < \infty \Bigr\}.$$
Define the set
$$\mH_n^b = \bigl\{ w\in V_J: \|w\|_{\H^b} \leq M \sqrt{n}\eps_n^b,~ \|\Psi(w)-b_0\|_{L^2(F_0)} \leq \eps_n^b \bigr\},$$
where $M$ is a large constant to be determined and $\varepsilon_n^b\asymp n^{-\frac{\beta\wedge \bar{\beta}}{2\bar{\beta}+d}} (\log n)$. The rate $\varepsilon_n^b$ is the contraction rate of the posterior distribution of $b$. This is given by \eqref{eq:gaus_conc_rate} in view of Lemma~\ref{lem:contrac_prod_b}, and shown to take this form in Theorem 4.5 of \cite{vandervaart2008}. It follows that the second condition in the definition of $\mH_n^b$ is satisfied with posterior mass tending to one in probability, provided the proportionality constant in $\varepsilon_n^b$ is chosen large enough.

By Hilbert space duality, one can write $\|w\|_{\H^b} = \sup_{\|u\|_{\H^b} \leq 1} \langle w,u\rangle_{\H^b}$, and since $\H^b$ is separable, one can further restrict the supremum to a countable subset $\mathcal{U} \subseteq \{u:\|u\|_{\H^b} \leq 1\}$. Note that for 
$W^b\sim \Pi$ as in \eqref{eq:series_prior}, $\E\|W^b\|_{\H^b}^2 = \sum_{j\leq J}\sum_k 1 \lesssim 2^{Jd}$ and $\sup_{u\in \mathcal{U}} \E\langle W^b,u\rangle_{\H^b}^2= \sup_{u\in\mathcal{U}} \|u\|_{\H^b}^2 \leq 1.$ Thus by Borell's inequality for the supremum of a Gaussian process (\cite{ledoux2001}, p. 134), $\Pi \left( \|W^b\|_{\H^b} \geq C2^{Jd/2} + x \right) \leq e^{-x^2/2}$ for any $x > 0$. 
Setting $x=(M/2)\sqrt{n}\eps_n^b$ and using that $2^{Jd/2} = o(\sqrt{n}\eps_n^b)$, this yields $\Pi(\|W^b\|_{\H^b} \geq M\sqrt{n}\eps_n^b) \leq e^{M^2 n(\eps_n^b)^2/8}$ for $n$ large enough. It then follows from Lemma~\ref{lem:small_prior_prob} that the first condition in the definition of $\mH_n^b$ is also satisfied with posterior probability tending to 1 in $P_0$-probability for $M>0$ large enough.

For $t\in \R$ and $\xi_n^b\in V_J$ satisfying \eqref{eq:RKHS_cond_b}, since $\|\xi_n^b\|_{\H^b} \leq \sqrt{n} \zeta_n^b$ and $\|\Psi(\eta^b)-\Psi(\eta^b-t\xi_n^b/\sqrt n)\|_{L^2(F_0)}\le |t|\|\xi_n^b\|_\infty/\sqrt n$,
\begin{align*}
\mH_n^b- t\xi_n^b/\sqrt{n}  & \supset \bigl\{ w\in V_J: \|w\|_{\H^b} \leq M\sqrt{n}\eps_n^b - |t|\zeta_n^b,\\
&\quad\quad \quad  \|\Psi(w)-b_0\|_{L^2(F_0)}\leq \varepsilon_n^b - |t|(\|\xi_{\eta_0}^b\|_\infty + \zeta_n^b)/\sqrt{n}\bigr\}.
\end{align*}
Since $\sqrt n\varepsilon_n^b\rightarrow\infty$ and $\zeta_n^b \rightarrow 0$, this set hardly differs from the
set $\mH_n^b$. Its posterior probability is seen to tend to 1 in probability by the same arguments
as for $\mH_n^b$, possibly after replacing $\varepsilon_n^b$ with a multiple of itself.

We now show  \eqref{eq:prob_sup2}, which is that the supremum of the empirical process indexed by the class of functions
$\mF_n := \bigl\{ \Psi(w)-b_0: w \in \mH_n^b \bigr\}$ tends to zero. We first establish the following bound on the bracketing entropy of $\mF_n$:
\begin{align}\label{brackets}
\log N_{[]}(\mF_n,L^2(P_0),\tau) \lesssim 2^{Jd} \log (n/\tau).
\end{align}
Since $\Psi$ is monotone and Lipschitz, a set of $\tau$-brackets in $L^2(P_0)$ for $\mH_n^b$ 
transforms into a set of $\tau$-brackets in $L^2(P_0)$ for $\mF_n$. Note that for $v,w\in V_J$, by Cauchy-Schwarz and since $\sup_z \sum_k |\psi_{jk}(z)|^2 \lesssim 2^{jd}$ for all $j$ by the localization property of the wavelets,
\begin{align*}
\|v-w\|_\infty & = \sup_z \left| \sum_{j\leq J} \sum_k \langle v-w,\psi_{jk}\rangle_{L^2} \psi_{jk}(z) \right| \\
& \leq  \left( \sum_{j\leq J}\sum_k \sigma_j^{-2}\langle v-w,\psi_{jk}\rangle_{L^2}^2 \right)^{1/2} \sup_z \left( \sum_{j\leq J} \sum_k \sigma_j^2 |\psi_{jk}(z)|^2 \right)^{1/2} \\
& \lesssim \|v-w\|_{\H^b} \left( \sum_{j\leq J}  2^{-2jr} \right)^{1/2} \lesssim \|v-w\|_{\H^b}.
\end{align*}
Therefore,
\begin{align*}
\log N_{[]}(\mH_n^b,L^2(P_0),2\tau) &\leq  \log N(\mH_n^b,\|\cdot\|_\infty,\tau)\\
& \leq \log N(\{w\in V_J: \|w\|_{\H^b} \leq M \sqrt{n}\eps_n^b\},\|\cdot\|_{\H^b},c\tau)
\end{align*}
for some $c>0$. Recall that for any norm on $\R^m$, the unit ball in that norm has $\tau$-covering number bounded by $(3/\tau)^m$ (\cite{gine2016}, Proposition 4.3.34). The right hand side of the last display is thus be bounded by $\text{dim}(V_J) \log (3cM\sqrt{n}\eps_n^b /\tau) \lesssim 2^{Jd} \log (n/\tau)$, which proves \eqref{brackets}.

By Theorem~2.14.2 of \cite{vandervaart1996} applied with the constant function $1$ as envelope function and \eqref{brackets},
\begin{align*}
P_0 \sup_{f\in \mF_n} |\G_n f| 
& \lesssim \int_0^{\delta} 2^{Jd/2} \sqrt{\log (n/\tau)}\, d\tau + \varepsilon_n^b 2^{Jd/2} \sqrt{\log (n/\delta)}
\end{align*}
for any $\delta >0$ such that $\sqrt n\delta> \sqrt{1+\log N_{[]}(\mF_n,L^2(P_0),\delta)}$ (so that
$\sqrt n a(\delta)$ as defined in Theorem~2.14.2 is bigger than the envelope 1). By \eqref{brackets} and since $2^{Jd} \sim n^{d/(2\bar{\beta}+d)}$, this last condition is satisfied for $\delta_n = M_0n^{-\bar{\beta}/(2\bar{\beta}+d)} \sqrt{\log n}$ with $M_0$ large enough. Recall the inequality
\begin{align*}
\int_0^a \sqrt{\log (A/x)}dx \leq \frac{2\log A}{2\log A-1} a\sqrt{\log (A/a)} \leq 4a\sqrt{\log (A/a)}
\end{align*}
for any $A\geq 2$ and $0<a\leq 1$ (p. 190 of \cite{gine2016}). Using this inequality in the second last display and setting $\delta = \delta_n = o(\eps_n^b)$,
\begin{align*}
P_0 \sup_{f\in \mF_n} |\G_n f| 
& \lesssim 2^{Jd/2} \sqrt{\log (n/\delta_n)}(\delta_n + \varepsilon_n^b) \lesssim (\log n)^{3/2} n^{\frac{d/2-\beta\wedge\bar{\beta}}{2\bar{\beta}+d}},
\end{align*}
which tends to zero for $\beta\wedge\bar{\beta}>d/2$.
\end{proof}

\begin{lemma}\label{lem:sup_RL_dep}
Let the prior $\Pi$ on $b$ be the distribution of $\Psi(R^{\bar\beta}+\lambda a_n)$ 
with  $R^{\bar\beta}$ the Riemann-Liouville process \eqref{eq:RL_process},
independent of $\lambda\sim N(0,\sigma_n^2)$ and $a_n$ a sequence of functions
with $\|a_n\|_\infty=O(1)$. If $b_0 \in C^\beta$ and $\beta \wedge \bar{\beta}>1/2$, then there exist 
sets $\mH_n^b\subset C[0,1]$ such that 
%$\Pi(\mW_n|X^{(n)}) \rightarrow^{P_0} 1$, and
$\Pi ((w,\lambda): w+(\lambda + tn^{-1/2})a_n \in \mH_n^b|X^{(n)}) \rightarrow^{P_0} 1$
for every $t\in \R$, and such that \eqref{eq:prob_sup3} holds.
%and for some $0 \leq \kappa \leq (\beta\wedge \bar{\beta})/(2\bar{\beta}+1)$,
%$$ \sup_{b=\Psi(w+\lambda{a}_n): (w,\lambda) \in \mW_n} \|b-b_0\|_{L^2(F_0)} 
%\leq  n^{-\frac{\beta\wedge \bar{\beta}}{2\bar{\beta}+1}} (\log n)^\kappa,$$
%$$P_0 \sup_{b=\Psi(w+\lambda a_n): (w,\lambda) \in \mW_n}\bigl|\G_n[b-b_0]\bigr| \rightarrow 0.$$
%with the last supremum being measurable.
\end{lemma}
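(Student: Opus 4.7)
The plan is to mirror the argument of Lemma~\ref{lem:sup_RL}, adapting it to accommodate the additional low-dimensional perturbation $\lambda a_n$ in the prior. Since $a_n$ is only assumed bounded (with $\|a_n\|_\infty = O(1)$) and need not be smooth, I cannot absorb it into the RKHS $H^{\bar\beta+1/2}$ of $R^{\bar\beta}$; instead, I treat $\lambda a_n$ as a one-dimensional ``low-complexity'' tail attached to the Gaussian component. The key simplification is that the shift $w+(\lambda+t/\sqrt n)a_n$ is obtained from a posterior draw $\eta^b = w+\lambda a_n$ by the simple translation $\eta^b \mapsto \eta^b + (t/\sqrt n)a_n$, which has sup-norm $O(n^{-1/2})$, so the shift is easily absorbed by a constant enlargement of the consistency neighborhood.

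Let $\eps_n^b = n^{-(\beta\wedge\bar\beta)/(2\bar\beta+1)}(\log n)^\kappa$ denote the contraction rate given by Lemma~\ref{lem:contrac_dep_EB}, pick $1/2 < s < \bar\beta$, take $\gamma_n$ as in \eqref{EqDefGammaEpsilon}, and set $K_n = 3 u_n \sigma_n^2 \sqrt n$, where $u_n$ is the sequence from \eqref{eq:dep_prior_lambda_cond}. I propose
$$\mH_n^b = \Bigl(\gamma_n H_1^s + M\sqrt n\, \eps_n^b H_1^{\bar\beta+1/2} + [-K_n,K_n]\,a_n\Bigr) \cap \bigl\{v: \|\Psi(v)-b_0\|_{L^2(F_0)} \le 2\eps_n^b\bigr\}.$$
Posterior concentration of the shifted event $\{w+(\lambda+t/\sqrt n)a_n \in \mH_n^b\}$ splits into three pieces. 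First, by Borell-Sudakov and the small-ball estimate in the Sobolev norm $\|\cdot\|_{H^s}$ for $R^{\bar\beta}$, the prior probability of the complement of $\gamma_n H_1^s + M\sqrt n\eps_n^b H_1^{\bar\beta+1/2}$ is at most $e^{-Ln(\eps_n^b)^2}$ for $M$ large; Lemma~\ref{lem:small_prior_prob} then converts this into posterior concentration. Second, condition~\eqref{eq:dep_prior_lambda_cond} gives $|\lambda|\le u_n\sigma_n^2\sqrt n$ with posterior probability $1-o_{P_0}(1)$, and since $n\sigma_n^2\to\infty$ forces $|t|/\sqrt n \ll u_n\sigma_n^2\sqrt n$, we obtain $|\lambda + t/\sqrt n| \le K_n$ with posterior probability tending to one. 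Third, the $L^2(F_0)$-consistency follows from Lemma~\ref{lem:contrac_dep_EB}, and the shift by $(t/\sqrt n)a_n$ perturbs the sup-norm by $O(n^{-1/2})=o(\eps_n^b)$, which is absorbed in the factor of $2$.

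For \eqref{eq:prob_sup3}, apply Theorem~2.14.2 of \cite{vandervaart1996} to $\mF_n = \{\Psi(v)-b_0: v \in \mH_n^b\}$. Brackets for a Minkowski sum combine multiplicatively, so
$$\log N_{[]}(\mF_n, L^2(P_0), 3\tau) \lesssim (\gamma_n/\tau)^{1/s} + (\sqrt n\,\eps_n^b/\tau)^{1/(\bar\beta+1/2)} + \log(K_n/\tau).$$
The first two terms are precisely those analysed in Lemma~\ref{lem:sup_RL}, while the third is only logarithmic in $1/\tau$ because $K_n \lesssim \sqrt n$ and $\|a_n\|_\infty = O(1)$. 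Picking the same truncation point $\delta_n$ as in Lemma~\ref{lem:sup_RL} and bounding the entropy integral term by term then delivers $P_0 \sup_{f\in\mF_n}|\G_n f|\to 0$ under exactly the condition $\beta\wedge\bar\beta > 1/2$.

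The main obstacle is to ensure the one-dimensional $\lambda a_n$ piece does not disrupt the choice of $\delta_n$ that made Lemma~\ref{lem:sup_RL} work. This is precisely where the assumption $\|a_n\|_\infty = O(1)$ enters: the entropy of $[-K_n,K_n]\,a_n$ is only $\log(K_n/\tau)$, which is asymptotically negligible against the Sobolev-type bounds, so no new smoothness requirement on $a_n$ is needed and the regime $\beta\wedge\bar\beta > 1/2$ suffices, as in the standard Riemann-Liouville case.
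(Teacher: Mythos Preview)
Your approach mirrors the paper's almost exactly: define $\mH_n^b$ as a Minkowski sum of the Borell--Sudakov concentration set for $R^{\bar\beta}$, a one-dimensional interval for $\lambda a_n$, and an $L^2(F_0)$-consistency ball; verify posterior concentration piece by piece; then bound the bracketing entropy, noting that the one-dimensional $\lambda$-piece only adds a logarithmic term and is therefore dominated by the Sobolev entropy already handled in Lemma~\ref{lem:sup_RL}.

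There is one logical slip. You bound $|\lambda|$ via the sequence $u_n$ from \eqref{eq:dep_prior_lambda_cond} and invoke $n\sigma_n^2\to\infty$, but neither $u_n$ nor that growth condition appears among the hypotheses of the lemma; they belong to Theorem~\ref{thm:dep_prior_gen_EB}, whose conditions this lemma is meant to \emph{verify}, not assume. The paper avoids this by bounding $|\lambda|$ directly: the Gaussian tail estimate $\Pi(|\lambda|\ge M\sigma_n\sqrt n\,\varepsilon_n^b)\le 2e^{-M^2n(\varepsilon_n^b)^2/2}$ together with Lemma~\ref{lem:small_prior_prob} gives posterior concentration on $\{|\lambda|\le M\sigma_n\sqrt n\,\varepsilon_n^b\}$ using only the lemma's own hypotheses (and the contraction rate from Lemma~\ref{lem:contrac_dep_EB}, which also needs $\sigma_n=O(1)$, implicit in both arguments). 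Replacing your $K_n$ by $M\sigma_n\sqrt n\,\varepsilon_n^b$ fixes the dependency and leaves the rest of your argument intact, since this choice is still $o(\sqrt n)$ and its entropy contribution remains logarithmic.
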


\begin{proof}
Since the proof of this result is very similar to that of Lemma~\ref{lem:sup_RL}, we present only
an outline. For $1/2<s<\bar{\beta}$, define the set
$\mH_n^b=\{w+\lambda a_n: (w,\lambda)\in \mW_n\}$, for
\begin{align*}
\mW_n 
& = \bigl\{(w,\lambda): w\in\gamma_n H_1^s + M\sqrt{n}\varepsilon_n^b H_1^{\bar{\beta}+1/2}, 
|\lambda| \leq M\sigma_n \sqrt{n} \varepsilon_n^b \bigr\} \\
& \qquad\qquad\qquad \qquad  \bigcap 
\bigl\{(w,\lambda) :\|\Psi(w + \lambda a_n)-b_0\|_{L^2(F_0)} \leq \varepsilon_n^b\bigr\},
\end{align*}
where $\varepsilon_n^b$ and $\gamma_n$ are defined in \eqref{EqDefGammaEpsilon}
in the proof of Lemma~\ref{lem:sup_RL}. The first set in the intersection that defines
$\mW_n$ is seen to have posterior probability tending to one
by the same argument as in Lemma~\ref{lem:sup_RL}, combined with 
the univariate Gaussian tail inequality 
$\Pi(|\lambda| \geq M \sigma_n \sqrt{n}\varepsilon_n^b)\leq 2e^{-Mn(\varepsilon_n^b)^2/2}$.
The posterior probability of the second set in the intersection tends to one in probability 
by Lemma~\ref{lem:contrac_dep_EB}. 
The posterior probability of the set $\mW_n$ shifted in its second coordinate
is seen to tend to 1 in the same manner. These results are true uniformly in 
$\| a_n\|_\infty\le M$.

%The first assertion in the display is the contraction rate for $b$, and is verified in Lemma~\ref{lem:contrac_dep_EB}. 

To verify \eqref{eq:prob_sup3} we apply the bracketing maximal inequality,
Theorem~2.14.2 of \cite{vandervaart1996}, as in the proof of Lemma~\ref{lem:sup_RL}.
Presently we need to control the $L^2(P_0)$-bracketing entropy of 
the sets $\mF_n = \bigl\{ \Psi(w+\lambda a_n)-b_0: (w,\lambda)\in \mW_n \bigr\}$. 
Since
\begin{align*}
N_{[]}(\mF_n,L^2(P_0),3\tau) & \leq  N(\gamma_n H_1^s,\|\cdot\|_\infty,\tau) \,
 N(M\sqrt{n}\varepsilon_n^b H_1^{\bar{\beta}+1/2},\|\cdot\|_\infty,\tau) \\
& \qquad 
\times N\bigl([-M\sigma_n\sqrt{n}\varepsilon_n^b ,M\sigma_n\sqrt{n} \varepsilon_n^b ],|\cdot|,\tau/\|a_n\|_\infty\bigr),
\end{align*}
and the last term is of strictly smaller order than the second, we recover the right-hand side of
\eqref{eq:bracketing_entropy} and hence the proof can be completed as in Lemma~\ref{lem:sup_RL}.
%Since $s>1/2$, we have again that $\mW_n$ is separable in $L^\infty$, and hence the desired supremum is measurable.
\end{proof}

\begin{lemma}\label{lem:sup_series_dep}
Let the prior $\Pi$ on $b$ be the distribution of $\Psi(W+\lambda a_n)$ with $W$ 
the finite Gaussian series \eqref{eq:series_prior},
independent of $\lambda\sim N(0,\sigma_n^2)$ and $a_n$ a sequence of functions
with $\|a_n\|_\infty=O(1)$. If $b_0 \in C^\beta([0,1]^d)$ and $\beta \wedge \bar{\beta}>d/2$, then there exist 
sets $\mH_n^b\subset C([0,1]^d)$ such that 
%$\Pi(\mW_n|X^{(n)}) \rightarrow^{P_0} 1$, and
$\Pi ((w,\lambda): w+(\lambda + tn^{-1/2})a_n \in \mH_n^b|X^{(n)}) \rightarrow^{P_0} 1$
for every $t\in \R$, and such that \eqref{eq:prob_sup3} holds.
%and for some $0 \leq \kappa \leq (\beta\wedge \bar{\beta})/(2\bar{\beta}+1)$,
%$$ \sup_{b=\Psi(w+\lambda{a}_n): (w,\lambda) \in \mW_n} \|b-b_0\|_{L^2(F_0)} 
%\leq  n^{-\frac{\beta\wedge \bar{\beta}}{2\bar{\beta}+1}} (\log n)^\kappa,$$
%$$P_0 \sup_{b=\Psi(w+\lambda a_n): (w,\lambda) \in \mW_n}\bigl|\G_n[b-b_0]\bigr| \rightarrow 0.$$
%with the last supremum being measurable.
\end{lemma}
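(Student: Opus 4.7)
The plan is to imitate the structure of Lemma~\ref{lem:sup_RL_dep} but with the finite-dimensional series-prior tools of Lemma~\ref{lem:sup_series} replacing the Riemann-Liouville ingredients. Write $J = J_{\bar\beta}$, $V_J = \text{span}(\psi_{jk}: j \leq J, k)$, and $\varepsilon_n^b \asymp n^{-(\beta\wedge\bar\beta)/(2\bar\beta+d)}\log n$. I would define
$$\mH_n^b = \bigl\{w + \lambda a_n : (w,\lambda) \in \mW_n\bigr\},$$
where $\mW_n$ is the set of pairs $(w,\lambda)$ with $w \in V_J$ satisfying $\|w\|_{\H^b} \leq M\sqrt n\, \varepsilon_n^b$, $|\lambda| \leq M \sigma_n \sqrt n\, \varepsilon_n^b$, and $\|\Psi(w + \lambda a_n) - b_0\|_{L^2(F_0)} \leq \varepsilon_n^b$, for a large constant $M$ to be chosen.

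For the posterior-concentration statement, the three defining conditions of $\mW_n$ are each handled separately. The RKHS-ball condition is treated by the Borell inequality and the small-prior-probability argument used in Lemma~\ref{lem:sup_series}, which gives prior probability of its complement at most $e^{-L n(\varepsilon_n^b)^2}$ for $M$ large. The condition on $\lambda$ is controlled by the univariate Gaussian tail bound $\Pi(|\lambda| \geq M\sigma_n\sqrt n\, \varepsilon_n^b) \leq 2 e^{-M^2 n(\varepsilon_n^b)^2/2}$. In both cases Lemma~\ref{lem:small_prior_prob} converts the exponentially small prior mass into vanishing posterior mass (using that the Kullback--Leibler small-ball condition at rate $\varepsilon_n^b$ holds by the argument in Lemma~\ref{lem:contrac_dep_EB}). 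The $L^2(F_0)$ neighbourhood condition is Lemma~\ref{lem:contrac_dep_EB} itself. For the shifted set $\{(w,\lambda): w + (\lambda + t/\sqrt n)a_n \in \mH_n^b\}$, the extra shift $t/\sqrt n$ is negligible compared to $M\sigma_n\sqrt n\, \varepsilon_n^b$ because $n\sigma_n^2 \to \infty$, and $|t|\|a_n\|_\infty/\sqrt n$ is negligible compared to $\varepsilon_n^b$, so the argument goes through with $\varepsilon_n^b$ possibly replaced by a multiple of itself.

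For \eqref{eq:prob_sup3} I would apply Theorem~2.14.2 of \cite{vandervaart1996} with constant envelope $1$ to the class $\mF_n = \{\Psi(w+\lambda a_n) - b_0 : (w,\lambda) \in \mW_n\}$. Since $\Psi$ is Lipschitz,
$$\log N_{[]}(\mF_n, L^2(P_0), 3\tau) \le \log N\bigl(\{w \in V_J:\|w\|_{\H^b} \leq M\sqrt n\, \varepsilon_n^b\}, \|\cdot\|_\infty, c\tau\bigr) + \log N\bigl([-M\sigma_n\sqrt n\, \varepsilon_n^b, M\sigma_n\sqrt n\, \varepsilon_n^b], |\cdot|, c\tau/\|a_n\|_\infty\bigr).$$
The second term is of smaller order than the first, while the first was shown in Lemma~\ref{lem:sup_series} to be $\lesssim 2^{Jd} \log(n/\tau)$. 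The rest of the argument — choosing $\delta_n = M_0 n^{-\bar\beta/(2\bar\beta+d)}\sqrt{\log n}$, applying the bracketing maximal inequality, and bounding the resulting entropy integral via $\int_0^a \sqrt{\log(A/x)}\,dx \leq 4a\sqrt{\log(A/a)}$ — is identical to the conclusion of Lemma~\ref{lem:sup_series} and yields $P_0 \sup_{f\in\mF_n}|\G_n f| \lesssim (\log n)^{3/2} n^{(d/2 - \beta\wedge\bar\beta)/(2\bar\beta+d)} \to 0$ under $\beta\wedge\bar\beta > d/2$.

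The main obstacle is bookkeeping: verifying that the one-dimensional $\lambda$-interval is dominated both in the prior-mass estimate and in the bracketing entropy by the RKHS ball for $w$. This dominance follows from $\|a_n\|_\infty = O(1)$ together with the scale $\sigma_n \lesssim 1$ (or $n\sigma_n^2 \to \infty$ paired with $\sigma_n$ not too large), but must be checked when invoking Lemma~\ref{lem:small_prior_prob} and when combining brackets for the sum space $V_J + \lambda a_n$.
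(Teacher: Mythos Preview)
Your proposal is correct and follows exactly the approach the paper takes: the paper's proof is simply ``follows that of Lemma~\ref{lem:sup_series} with the same modifications as in Lemma~\ref{lem:sup_RL_dep},'' and you have spelled out precisely those modifications --- replacing the Sobolev-ball sieve by the RKHS ball $\{w \in V_J: \|w\|_{\H^b} \le M\sqrt{n}\varepsilon_n^b\}$, adding the one-dimensional $\lambda$-interval, and noting that the latter contributes negligibly to both the prior-mass and bracketing-entropy estimates. Your final caveat about $\sigma_n$ is appropriate caution but not a real obstacle: the lemma is only invoked under the hypotheses of Theorem~\ref{thm:dep_prior_gen_EB} and Corollary~\ref{cor:RL_dep_F}, where $n\sigma_n^2 \to \infty$ and $\sigma_n \lesssim 1$ are available, and the paper's Lemma~\ref{lem:sup_RL_dep} handles the shifted set at the same level of detail.
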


\begin{proof}
The proof follows that of Lemma \ref{lem:sup_series} with the same modifications as in Lemma \ref{lem:sup_RL_dep} and is hence omitted.
\end{proof}

\section{Auxiliary results}\label{sec:auxiliary_results}

\subsection{Empirical process results}
In this subsection we collect some novel results on empirical processes, which are useful
to simplify otherwise long lists of technical conditions for the main results.
We denote by $\P_n$ and $\G_n=\sqrt n (\P_n-P_0)$ the empirical measure and process of a
sample of i.i.d.\ observations $X_1,\ldots,X_n$ with law $P_0$ in a sample space $(\X,\A)$.  For the
interpretation of outer expectation and probability, these are understood to be defined as the
coordinate projections on a product probability space.

\begin{lemma}\label{lem:emp_proc_change}
For any set $\mH$ of measurable functions $h:\X\to\RR$ and bounded measurable function $\varphi: \X\to[0,1]$, 
\begin{align*}
\E^*\sup_{h \in \mH} |\G_n[\varphi h]| 
\leq 4 \|\varphi\|_\infty \E^* \sup_{h\in \mH}|\G_n[h]|+  \sqrt{P_0(\varphi-P_0\varphi)^2} \sup_{h\in \mH} |P_0h| .
\end{align*}
\end{lemma}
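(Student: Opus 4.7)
The plan is to start from the algebraic decomposition
\[
\G_n[\varphi h] \;=\; \G_n[\varphi(h-P_0h)] \;+\; (P_0h)\,\G_n[\varphi],
\]
which is valid because $\G_n$ annihilates the constant $P_0h$. After taking $\sup_h$ and outer expectation, the second summand becomes $\sup_h|P_0h|\cdot\E|\G_n[\varphi]|$; by Jensen's inequality $\E|\G_n[\varphi]|\le\sqrt{\var(\varphi(X))}=\sqrt{P_0(\varphi-P_0\varphi)^2}$, which is exactly the second term of the claimed bound. All remaining work concerns the first summand.

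For that first summand, I would apply the standard symmetrization inequality (Lemma~2.3.1 of \cite{vandervaart1996}) to the class $\{\varphi(h-P_0h):h\in\mH\}$, losing a factor of $2$ and replacing $\G_n[\varphi(h-P_0h)]$ by the Rademacher sum $n^{-1/2}\sum_i\varepsilon_i\varphi(X_i)(h(X_i)-P_0h)$. Then, conditionally on $X_1,\dots,X_n$, the coefficients $\alpha_i=\varphi(X_i)/\|\varphi\|_\infty\in[0,1]$ are deterministic with $|\alpha_i|\le 1$, and I would invoke the linear-contraction form of the Ledoux--Talagrand principle: for deterministic $|\alpha_i|\le 1$ one has $\E_\varepsilon\sup|\sum_i\varepsilon_i\alpha_i g_i|\le\E_\varepsilon\sup|\sum_i\varepsilon_i g_i|$ with no extra factor of $2$. (This sharper bound follows by writing each $\alpha_i$ as $\E\sigma_i$ for an independent Rademacher $\sigma_i$ of mean $\alpha_i$, noting that $\varepsilon_i\sigma_i$ is again Rademacher, and applying Jensen.) This pulls $\|\varphi\|_\infty$ out and leaves a Rademacher average indexed by the mean-zero class $\{h-P_0h:h\in\mH\}$.

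The last step is de-symmetrization on that mean-zero class: introducing an independent copy $X_i'$, rewriting $\sum_i\varepsilon_i(h(X_i)-P_0h)=\E'\sum_i\varepsilon_i(h(X_i)-h(X_i'))$, using that the pairwise differences $h(X_i)-h(X_i')$ are symmetric so that the $\varepsilon_i$'s drop out of the supremum's distribution, and applying the triangle inequality to split into two identically distributed empirical processes. This costs a factor of $2$ and returns us to $\E^*\sup_h|\G_n[h-P_0h]|=\E^*\sup_h|\G_n[h]|$ (again using that $\G_n$ kills $P_0h$). Combining the three factors $2\cdot 1\cdot 2=4$ yields the $4\|\varphi\|_\infty$ coefficient. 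The one genuinely delicate point is this constant bookkeeping: invoking the general $1$-Lipschitz contraction principle (treating $t\mapsto\varphi(X_i)t$ as a generic Lipschitz map of constant $\varphi(X_i)\le\|\varphi\|_\infty$) would cost an extra factor of $2$ and produce an $8$ in place of the advertised $4$, so it is essential to use the sharper linear version that exploits the fact that multiplication by a scalar is an odd contraction.
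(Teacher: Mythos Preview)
Your proof is correct and follows essentially the same route as the paper's: the same initial decomposition, symmetrization (factor 2), the linear contraction principle to strip off $\varphi(X_i)/\|\varphi\|_\infty$ at no cost, and desymmetrization (factor 2) to return to $\E^*\sup_h|\G_n[h]|$. The paper simply cites Lemma~2.3.6 and Proposition~A.1.10 of \cite{vandervaart1996} for these steps, whereas you spell out the contraction via the biased-sign trick and the desymmetrization via the independent-copy argument; your remark that the generic Lipschitz contraction would lose an extra factor of $2$ is also to the point.
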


\begin{proof}
Since $\G_n[\varphi h]= \G_n[\varphi (h-P_0h)]+P_0h\G_n\varphi$, and the expectation of $\sup_h|P_0h|\,| \G_n\varphi|$
is bounded above by the second term on the right of the lemma, it suffices to bound
$\E^*\sup_h\bigl|\G_n[\varphi (h-P_0h)]\bigr|$ by the first term on the right of the lemma.
The latter term does not change if every $h$ is replaced by $h-P_0h$. Therefore, it suffices to prove
the lemma under the assumption that $P_0h=0$, for every $h$.

Let $\varepsilon_1,\dots,\varepsilon_n$ be i.i.d. Rademacher random variables independent of the
observations, defined on an additional factor of the underlying probability space that carries the observations. 
By the symmetrization inequality Lemma~2.3.6 of \cite{vandervaart1996}, 
$$\E^*\sup_h \Bigl| \sum_{i=1}^n \bigl(\varphi (X_i) h(X_i) - P_0[\varphi h]\bigr) \Bigr| 
\leq 2\|\varphi\|_\infty \E^*\sup_h \Bigl| \sum_{i=1}^n \varepsilon_i \frac{\varphi (X_i)}{\|\varphi\|_\infty} h(X_i)  \Bigr|.$$
By the contraction principle, Proposition~A.1.10 of \cite{vandervaart1996},
this inequality remains true if the variables $\varphi(X_i)/\|\varphi\|_\infty$ in the right side are removed.
Since $P_0h=0$ for every $h$, the resulting expression is bounded above by 
$4\|\varphi\|_\infty \E^*\sup_h \bigl| \sum_{i=1}^n  h(X_i) \bigr|$,
by the other part of the symmetrization inequality Lemma~2.3.6 of \cite{vandervaart1996}.
Dividing everything through by $\sqrt{n}$ completes the proof.
\end{proof}

\begin{lemma}\label{lem:prob_to_expect}
If $\mH_n$ are classes of measurable functions with uniformly square-integrable envelope functions,
then $\sup_{h\in \mH_{n}}|\G_n[h]|\rightarrow 0$ in outer probability if and only if the
convergence is in outer mean.
\end{lemma}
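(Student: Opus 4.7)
The direction ``outer mean implies outer probability'' is immediate by Markov's inequality applied to the nonnegative random variable $Z_n := \sup_{h \in \mH_n}|\G_n h|$. For the converse, I assume $Z_n \to 0$ in outer probability. Since $\G_n h$ is unchanged by the replacement $h \rightsquigarrow h - P_0 h$, and the centered envelope $H_n + P_0 H_n$ remains uniformly square integrable (because $P_0 H_n \le (P_0 H_n^2)^{1/2}$ is uniformly bounded, and $(a+b)^2 \le 2a^2+2b^2$), I may assume without loss of generality that $P_0 h \equiv 0$ on $\mH_n$.

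The plan is to combine symmetrization with Hoffmann--J\o rgensen's inequality. By Lemma~2.3.6 of \cite{vandervaart1996},
$$\E^* Z_n \le 2\E^* T_n, \qquad T_n := \sup_{h \in \mH_n}\Bigl|n^{-1/2}\sum_{i=1}^n \varepsilon_i h(X_i)\Bigr|,$$
with $\varepsilon_i$ iid Rademacher signs independent of the $(X_i)$. Viewing $T_n$ as the norm in $\ell^\infty(\mH_n)$ of a sum of independent symmetric Banach-valued summands $Y_i = n^{-1/2}\varepsilon_i h(\cdot)(X_i)$, whose norms satisfy $\|Y_i\| \le n^{-1/2}H_n(X_i)$, the Hoffmann--J\o rgensen inequality (Proposition~A.1.5 of \cite{vandervaart1996}) yields a universal constant $C$ such that, for any $t_n > 0$ with $\Pr^*(T_n > t_n) \le 1/24$,
$$\E^* T_n \le C\bigl(t_n + \E^* \max_{i \le n} n^{-1/2} H_n(X_i)\bigr).$$

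Both terms on the right must be shown to vanish. Uniform square integrability of $\{H_n^2\}$ controls the maximum by a standard truncation: for any $\varepsilon > 0$, choose $K$ with $\sup_n P_0 H_n^2 \mathbf{1}\{H_n > K\} < \varepsilon$, so that $\E \max_i H_n(X_i)^2 \le K^2 + n\varepsilon$, whence $\E^* \max_i n^{-1/2}H_n(X_i) \le (K^2/n + \varepsilon)^{1/2} \to \sqrt\varepsilon$, and letting $\varepsilon \to 0$ gives the conclusion. The quantile $t_n$ can be made to tend to zero provided $T_n \to 0$ in outer probability, and this desymmetrization step is the main technical obstacle. Exploiting $P_0 h \equiv 0$ and introducing iid copies $X_i'$ of the $X_i$, the identity $n^{-1/2}\sum \varepsilon_i h(X_i) = E'[n^{-1/2}\sum \varepsilon_i(h(X_i) - h(X_i'))]$ combined with Jensen's inequality and the distributional invariance of $(\varepsilon_i(h(X_i) - h(X_i')))_i$ under sign changes allow $T_n$ to be dominated (in appropriate tail/moment senses) by $Z_n + Z_n'$ for an independent copy $Z_n'$; thus $Z_n \to 0$ in outer probability passes to $T_n \to 0$ in outer probability, furnishing the admissible sequence $t_n \to 0$. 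Collecting everything gives $\E^* Z_n \le 2\E^* T_n \to 0$, as required.
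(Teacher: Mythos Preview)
Your overall strategy—Hoffmann--J\o rgensen plus control of $\E^*\max_i n^{-1/2}H_n(X_i)$ via uniform square integrability—is the same as the paper's, and your treatment of the maximum term is fine. The difference, and the place where your argument has a genuine gap, is the Rademacher symmetrization detour.

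The paper applies the Hoffmann--J\o rgensen inequality \emph{directly} to the mean-zero summands $n^{-1/2}(h(X_i)-P_0h)$ (this is the ``second inequality'' in Proposition~A.1.5 of \cite{vandervaart1996}, valid for centred, not necessarily symmetric, summands). The quantile appearing in the bound is then a quantile of $Z_n$ itself, which tends to zero by hypothesis. Nothing further is needed.

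You instead symmetrize to $T_n$ and apply the symmetric version of Hoffmann--J\o rgensen, so the quantile $t_n$ you must control is a quantile of $T_n$. Your desymmetrization sketch does not deliver $T_n\to 0$ in outer probability from $Z_n\to 0$ in outer probability. Concretely: Jensen gives $T_n\le E'[\widetilde T_n]$ with $\widetilde T_n=\sup_h|n^{-1/2}\sum\varepsilon_i(h(X_i)-h(X_i'))|$, and the distributional identity gives $\widetilde T_n\stackrel{d}{=}\sup_h|n^{-1/2}\sum(h(X_i)-h(X_i'))|\le Z_n+Z_n'$. This shows $\widetilde T_n\to 0$ in probability, but you need the \emph{conditional expectation} $E'[\widetilde T_n]\to 0$ in probability, which does not follow without some uniform integrability of $\widetilde T_n$—and that is essentially the conclusion you are trying to prove. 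Taking full expectations only yields $\E T_n\le 2\E Z_n$, which is circular.

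The simplest repair is to drop the Rademacher step and apply Hoffmann--J\o rgensen directly, as the paper does. Alternatively, if you want to symmetrize, use a ghost sample rather than Rademacher signs: the summands $h(X_i)-h(X_i')$ are genuinely symmetric, the relevant supremum is bounded by $Z_n+Z_n'\to 0$ in probability (so its quantile tends to zero), and $\E^*Z_n\le \E^*\sup_h|n^{-1/2}\sum(h(X_i)-h(X_i'))|$ by Jensen. This is in fact how the mean-zero version of Hoffmann--J\o rgensen is derived from the symmetric one.
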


\begin{proof}
Convergence in outer mean always implies convergence in outer probability. It suffices
to show the converse in this special case. 
By the Hoffmann-J\o rgensen inequality for moments applied to the stochastic processes $n^{-1/2}(h(X_i)-P_0h)$
(see the second inequality in Proposition~A.1.5 of \cite{vandervaart1996}), this follows if
$\E^* \max_{i\le n}\sup_h |h(X_i)-P_0h|=o(\sqrt n)$. 
If $H_n$ are measurable square-integrable envelope functions, then clearly $\sup_h|P_0h|\le P_0H_n= O(1)$,
while $\max_{i\le n}\sup_h |h(X_i)|\le \max_{i\le n}H_n(X_i)$. For every $\epsilon>0$, the latter variable 
is bounded above by $\epsilon\sqrt n+\sum_{i=1}^nH_n(X_i)1\{H_n(X_i)>\epsilon\sqrt n\}$. 
The expectation of the second term is bounded above by $\sqrt n\epsilon^{-1} P_0 H_n^21_{H_n>\epsilon \sqrt n}=o(\sqrt n)$,
by the assumed uniform integrability of the envelope functions $H_n$. Since this is true for any $\epsilon>0$,
the desired result follows.
\end{proof}

\begin{lemma}
\label{LemmaGCnPreservation}
For $i=1,2,\ldots, k$, let $\mH_{n,i}$ be classes of measurable functions that  
are separable as subsets of $L^1(P_0)$, have uniformly integrable envelope functions ($n=1,2,\ldots$),
and are such that $\sup_{h\in \mH_{n,i}}|(\P_n-P_0)[h]|\rightarrow 0$ in outer probability. Let
$\phi: \RR^k\to\RR$ be continuous. If the classes of functions
$\phi(\mH_{n}):=\{\phi(h_1,\ldots, h_k): h_i\in \mH_{n,i}\}$ have uniformly integrable envelope functions,
then $\E^*\sup_{h\in \phi(\mH_{n})}|(\P_n-P_0)[h]|\rightarrow 0$.
\end{lemma}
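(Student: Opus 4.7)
The plan is to reduce to a setting where $\phi$ is Lipschitz on a compact cube and each $h_j$ is uniformly bounded, and then apply a symmetrization-and-contraction argument to link the empirical process indexed by $\phi(\mH_n)$ to those indexed by the individual classes $\mH_{n,j}$. By Lemma~\ref{lem:prob_to_expect} applied to $\phi(\mH_n)$ (whose envelopes are uniformly integrable by hypothesis), it suffices to show convergence in outer probability; I fix $\delta>0$ and bound the probability that the sup exceeds $\delta$.

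First I would truncate. For $M>0$ set $h_j^M:=(-M)\vee h_j\wedge M$. Because $|h_j-h_j^M|\le H_{n,j}\mathbf{1}\{H_{n,j}>M\}$ and the $H_{n,j}$ are uniformly integrable, both $P_0[H_{n,j}\mathbf{1}\{H_{n,j}>M\}]$ and the expectation of $\P_n[H_{n,j}\mathbf{1}\{H_{n,j}>M\}]$ are $o_M(1)$ uniformly in $n$, so Markov's inequality gives $\sup_{h\in\mH_{n,j}}|(\P_n-P_0)(h-h^M)|\to 0$ in outer probability as $M\to\infty$, uniformly in $n$. An analogous truncation of $\phi$ at level $N$ via an envelope $F_n$ of $\phi(\mH_n)$ reduces matters to proving, for each fixed $M,N$, that $\sup_h|(\P_n-P_0)\phi_N(h^M)|\to 0$ in outer probability, where $\phi_N:=(-N)\vee\phi\wedge N$. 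On the compact cube $[-M,M]^k$ the bounded continuous $\phi_N$ is uniformly continuous, hence for any $\kappa>0$ can be approximated uniformly to within $\kappa$ by a Lipschitz function $\tilde\phi$ with constant $L=L(\kappa)$ (Moreau--Yosida regularization), contributing only a further $O(\kappa)$ error.

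For the Lipschitz $\tilde\phi$ and bounded truncated functions, Lemma~2.3.6 of \cite{vandervaart1996} yields the symmetrization bound
\begin{equation*}
\E^*\sup_{h}\bigl|(\P_n-P_0)\tilde\phi(h^M)\bigr|\le 2\E^*\sup_{h}\Bigl|\frac1n\sum_{i=1}^n\varepsilon_i\tilde\phi\bigl(h_1^M(X_i),\dots,h_k^M(X_i)\bigr)\Bigr|,
\end{equation*}
with independent Rademacher signs $\varepsilon_i$. The vectorial Rademacher contraction principle (the multivariate Talagrand contraction inequality) then majorizes the right-hand side by $CL\sum_{j=1}^k\E^*\sup_{h_j\in\mH_{n,j}}|n^{-1}\sum_i\varepsilon_i h_j^M(X_i)|$. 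A second application of symmetrization in reverse, followed by the one-dimensional truncation bound to pass from $h_j^M$ back to $h_j$ up to an $M$-dependent error of order $P_0[H_{n,j}\mathbf{1}\{H_{n,j}>M\}]$, controls each summand by $\E^*\sup_{h_j\in\mH_{n,j}}|(\P_n-P_0)h_j|+o_M(1)$. The first quantity is $o(1)$ by the hypothesis on $\mH_{n,j}$ together with Lemma~\ref{lem:prob_to_expect}, and letting successively $n\to\infty$, $\kappa\to 0$, and $M,N\to\infty$ gives the claim.

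The principal obstacle is ensuring that all truncation and approximation errors remain negligible \emph{uniformly} over the triangular array of classes $(\mH_{n,i})$, whose envelopes are only uniformly integrable rather than uniformly bounded; this is precisely what the uniform integrability hypotheses deliver, allowing the quantitative bounds at each step to be chosen free of $n$, so that the iterated limits above are legitimate. A secondary subtlety is measurability: the separability assumption in $L^1(P_0)$ permits restriction to countable dense subfamilies, making the suprema appearing in the argument genuinely measurable and legitimizing the use of outer expectation throughout.
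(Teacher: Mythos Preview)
Your argument is essentially correct and takes a genuinely different route from the paper's proof. The paper proceeds via an entropy argument: it first passes from Rademacher to Gaussian multipliers using the multiplier inequality (Lemma~2.9.1 of \cite{vandervaart1996}), then invokes Sudakov's minoration to deduce that $n^{-1}\log N(\mH_{n,i},L^2(\P_n),\epsilon)\to 0$ in outer probability for each $i$. A uniform-continuity lemma for continuous $\phi$ on compact cubes (Lemma~\ref{LemmaContractionContinuous}) then transfers this entropy bound to the truncated class $\{\phi(h)1_{H_n\le M}:h\in\mH_n\}$, and the subgaussian maximal inequality finishes the job. Your approach bypasses entropy entirely: after the same truncation and a Lipschitz approximation of $\phi$ on $[-M,M]^k$, you apply a vector-valued Rademacher contraction inequality directly. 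This is shorter and more transparent, but relies on the vector contraction result of Maurer (2016, \emph{A vector-contraction inequality for Rademacher complexities}), whereas the paper uses only classical tools from \cite{vandervaart1996}. You should cite this explicitly rather than calling it ``the multivariate Talagrand contraction inequality'', since the scalar Ledoux--Talagrand result (Proposition~A.1.10 in \cite{vandervaart1996}) does not extend trivially to Lipschitz $\phi:\RR^k\to\RR$ with $k>1$.

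Two minor points to tidy. First, Maurer's inequality is one-sided (no absolute values), so reduce $\E_\varepsilon\sup_h|\cdot|$ to $2\E_\varepsilon\sup_h(\cdot)$ via the symmetry $(\varepsilon_i)\stackrel{d}{=}(-\varepsilon_i)$ before applying it. Second, your ``symmetrization in reverse'' step requires centering $h_j^M$ at $P_0h_j^M$ first, since the desymmetrization half of Lemma~2.3.6 in \cite{vandervaart1996} bounds $\E_\varepsilon\sup|n^{-1}\sum_i\varepsilon_i g(X_i)|$ by $2\E^*\sup|(\P_n-P_0)g|$ only for centered $g$; the residual term $\sup_{h_j}|P_0h_j^M|\cdot\E|n^{-1}\sum_i\varepsilon_i|=O(M/\sqrt n)$ is harmless.
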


\begin{proof}
Because the classes $\mH_{n,i}$ are separable as subsets of $L^1(P_0)$,  they admit
by a result of Talagrand (see Theorem~2.3.16 in \cite{vandervaart1996})  pointwise separable modifications
$\tilde \mH_{n,i}$ relative to the $L^1(P_0)$-norm. Arguing as in the proof of
Theorem~2.10.6 of \cite{vandervaart1996} (or Theorem~2.10.1 in the second edition of this reference),
we see that it suffices to prove the theorem for these separable versions. Thus we may assume that
the classes $\mH_{n,i}$ are appropriately measurable. 

By the preceding lemma and the assumed uniform integrability of the envelope
functions, the convergence in probability $\sup_{h\in \mH_{n,i}}|(\P_n-P)[h]|\rightarrow 0$ 
can be strengthened to convergence in outer mean. 
Then, by the (de)symme\-tri\-zation inequality, Lemma~2.3.6 of \cite{vandervaart1996},
for independent Rademacher variables $\epsilon_1,\ldots, \epsilon_n$,
$$\E^* \sup_{h\in \mH_{n,i}} \Bigl| {1\over n}\sum_{i=1}^n \epsilon_i (h(X_i) - P_0h) \Bigr|
\le 2\E^* \sup_{h\in \mH_{n,i}}|(\P_n-P_0)[h]| \rightarrow 0.$$
Because $\E\sup_ h|n^{-1}\sum_{i=1}^n \epsilon_i P_0h|\le \E |n^{-1}\sum_{i=1}^n \epsilon_i|\,\sup_h|P_0h|$ 
tends to zero in mean by the law of large numbers, we also obtain convergence
to zero of the left side without the terms $P_0h$. Next
by the multiplier inequalities, Lemma~2.9.1 of \cite{vandervaart1996},
we see that, for any $n_0\in\mathbb{N}$ and i.i.d.\ standard normal variables $\xi_1,\ldots, \xi_n$
independent from $X_1,\ldots, X_n$,
$$\E^*\!\sup_{h\in\mH_{n,i}}\Bigl|\frac1n\sum_{i=1}^n \xi_i h(X_i)\Bigr|
\lesssim \frac{n_0 P_0H_{n,i}}n \E \max_{i\le n}|\xi_i|
+\max_{n_0\le k\le n} \E^*\!\sup_{h\in\mH_{n,i}}\Bigl|\frac1k\sum_{i=n_0}^k \epsilon_i h(X_i)\Bigr|.$$
By Jensen's inequality the last term on the right does not decrease if the sum on the right starts at $i=1$
rather than $i=n_0$. Then by the preceding the limsup as $n\rightarrow\infty$ is arbitrarily close to zero
if $n_0$ is large enough. For fixed $n_0$ the first term on the right tends to zero as $n\rightarrow\infty$.
We conclude that the left side tends to zero as $n\rightarrow\infty$.
Given $X_1,\ldots, X_n$ the process
$n^{-1/2}\sum_{i=1}^n\xi_i h(X_i)$ is  mean-zero Gaussian,
with natural pseudo-metric given by the $L^2(\P_n)$-norm.
By Sudakov's inequality (e.g.\ Proposition~A.2.5 of \cite{vandervaart1996}),
$${1\over \sqrt n} \sup_{\epsilon > 0}\epsilon \sqrt{\log N(\mH_{n,i}, L^2 (\P_n),\epsilon)}
\le 3 \E_{\xi} \sup_{h\in \mH_{n,i}}\Bigl|{1\over n} \sum_{i=1}^n \xi_i h(X_i)\Bigr|.$$
Taking the outer expectation across this inequality, we see that the left
side tends to zero in outer expectation and hence 
 $n^{-1}{\log N\bigl(\mH_{n,i}, L^2 (\P_n),\epsilon\bigr)}$ 
tends to zero in outer probability, for every $\epsilon>0$. 

Let $H_{n,1},\ldots, H_{n,k}$ and $F_n$ be integrable envelopes
for the classes of functions $\mH_{n,1},\ldots, \mH_{n,k}$ and $\mF_n:=\phi(\mH_n)$,
respectively, and set $H_n =H_{n,1} \vee \cdots \vee H_{n,k}$. 
Furthermore, for $M\in (0,\infty)$ define $\mF_{n,M}$ to be the class of functions
$f1_{H_n \le M}$, when $f$ ranges over $\mF_n$. Then
$$\sup_{f\in\mF_n}| (\P_n -P_0)(f) | \le (\P_n + P_0)F_n 1_{[H_n > M]}
+ \sup_{f\in\mF_{n,M}}| (\P_n - P_0) f |.$$
The expectation of the first term
on the right converges to $0$ as $M\rightarrow\infty$ by the assumed uniform integrability of
$F_n$ and $H_n$. We shall show that the second term tends to zero in outer mean, for every fixed $M$.  
By Lemma~\ref{LemmaContractionContinuous} below, applied
to the function $\phi: [-M, M]^k \rightarrow \RR$ and $\|\cdot\|$
the $L^1$-norm on $\RR^k$, there exists for every $\epsilon>0$
a $\delta>0$, depending only on $\epsilon$, $\phi$ and $M$, such that
for any pairs $(g_i, h_i ) \in \mH_{n,i}$, the inequality
$$\P_n | g_i - h_i |1_{[H_{n,i}\le M ]} \le {\delta \over k}, \qquad i = 1,\ldots, k$$
implies that 
$$\P_n\bigl| \phi(g_1, \ldots,g_k) - \phi(h_1, \ldots,h_k) | 1_{H_n \le M} \le \epsilon.$$
We conclude that 
$$N\bigl(\mF_{n,M}, L^1(\P_n),\epsilon \bigr) 
\le \prod_{i=1}^k N\Bigl(\mH_{n,i},L^1 (\P_n ),{\delta \over k}\Bigr).$$
Take the logarithm and divide by $n$ to see that
$n^{-1}\log N\bigl(\mF_{n,M}, L^1 (\P_n),\epsilon\bigr)$ tends to zero in outer probability, in view of the
preceding paragraph. This is true for every $\epsilon >0$ and $M>0$. 

If $\mF'_{n,M}$ is a minimal $\epsilon$-net over $\mF_{n,M}$ for the $L^1(\P_n)$-norm and fixed observations,
then
$$\E_\epsilon^*\sup_{f\in \mF_{n,M}}\Bigl|\frac1n\sum_{i=1}^n\epsilon_if(X_i)\Bigr|
\le \E_\epsilon^*\sup_{f\in \mF_{n,M}'}\Bigl|\frac 1n\sum_{i=1}^n\epsilon_if(X_i)\Bigr|+\epsilon.$$
For fixed observations the process $n^{-1/2}\sum_{i=1}^n\epsilon_if(X_i)$ is subgaussian
relative to the $L^2(\P_n)$-norm. For $f\in\mF_{n,M}$ these norms are bounded above by
$M$. Therefore, by the subgaussian maximal inequality the
preceding display is bounded above by a multiple of
$$\sqrt{\frac 1n}\sqrt{1+\log N\bigl(\mF_{n,M}, L^1 (\P_n),\epsilon\bigr)} M+\epsilon.$$
Since this tends to $\epsilon$ in outer probability, for any $\epsilon>0$, the left side of the preceding display
tends to zero in outer probability.
Since this is bounded above by $M$, it follows that its expectation also tends to zero and hence
in view of the symmetrization inequality, Lemma~2.3.6 of \cite{vandervaart1996},
$$\E^*\sup_{f\in \mF_{n,M}}\Bigl|\frac1n\sum_{i=1}^n\bigl(f(X_i)-P_0f\bigr)\Bigr|\le 
2\E^*\sup_{f\in \mF_{n,M}}\Bigl|\frac1n\sum_{i=1}^n\epsilon_if(X_i)\Bigr|\rightarrow 0.$$
This concludes the proof.
\end{proof}

The assumption that the classes $\mH_{n,i}$ are separable as subsets of $L^1(P_0)$ is made
only to ensure enough  measurability. The proof of the preceding lemma is based on a combination of
the proof of the converse of the Glivenko-Cantelli theorem and the preservation result 
\cite{vdVWPreservation}. The lemma extends a known result for Glivenko-Cantelli classes
to sequences of classes that depend on $n$. 

The following lemma is proved in \cite{vdVWPreservation}.
Let $\|\cdot\|$ be any norm on $\RR^k$.

\begin{lemma}
\label{LemmaContractionContinuous}
Suppose that $K \subset \RR^k$ is compact and $\phi: K\to\RR$ is continuous.
Then for every $\epsilon >0$ there
exists $\delta >0$ such that for all $n$ and for all 
$a_1,\ldots,a_n, b_1,\ldots,b_n\in K$, the inequality
$n^{-1} \sum_{i=1}^n \|a_i -b_i\| < \delta $ implies that 
$n^{-1}\sum_{i=1}^n\bigl| \phi(a_i) - \phi(b_i)\bigr| < \epsilon$.
\end{lemma}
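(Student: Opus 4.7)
The plan is to reduce to uniform continuity of $\phi$ on $K$ combined with a Markov-style splitting of the indices into those where $\|a_i - b_i\|$ is ``small'' and those where it is ``large''. Since $K\subset\RR^k$ is compact and $\phi:K\to\RR$ is continuous, I would first invoke two standard consequences: $\phi$ is uniformly continuous on $K$, and $\phi$ is bounded, say $|\phi(x)|\le M$ for all $x\in K$. Given $\epsilon>0$, uniform continuity gives a $\delta_1>0$ such that $\|a-b\|<\delta_1$ implies $|\phi(a)-\phi(b)|<\epsilon/2$ for all $a,b\in K$.

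Next, for a given configuration $a_1,\ldots,a_n,b_1,\ldots,b_n\in K$, I would partition the indices into the ``good'' set $G=\{i: \|a_i-b_i\|<\delta_1\}$ and the ``bad'' set $B=\{i:\|a_i-b_i\|\ge\delta_1\}$. On $G$ the pointwise bound $|\phi(a_i)-\phi(b_i)|<\epsilon/2$ applies, while on $B$ we only have the trivial bound $|\phi(a_i)-\phi(b_i)|\le 2M$. By Markov's inequality,
\[
\frac{|B|}{n} \;\le\; \frac{1}{\delta_1}\cdot\frac{1}{n}\sum_{i=1}^n\|a_i-b_i\| \;<\; \frac{\delta}{\delta_1},
\]
so combining these two bounds yields
\[
\frac1n\sum_{i=1}^n|\phi(a_i)-\phi(b_i)| \;\le\; \frac{\epsilon}{2}\cdot\frac{|G|}{n} + 2M\cdot\frac{|B|}{n} \;<\; \frac{\epsilon}{2} + \frac{2M\delta}{\delta_1}.
\]
Choosing $\delta$ smaller than $\epsilon\delta_1/(4M)$ (and depending only on $\epsilon$, $\phi$ and $K$, not on $n$) makes the right-hand side at most $\epsilon$, as required.

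There is no real obstacle here; the only thing to verify carefully is that $\delta_1$ (and hence $\delta$) can be chosen independently of $n$, which is immediate because uniform continuity and boundedness of $\phi$ on the compact set $K$ are purely analytic statements about $\phi$ and $K$. The $n$ and the sequences $(a_i),(b_i)$ enter only through averaging, which the Markov step handles uniformly in $n$.
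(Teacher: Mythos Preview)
Your argument is correct: the combination of uniform continuity and boundedness of $\phi$ on the compact set $K$, together with the Markov-type bound on the fraction of ``bad'' indices, yields exactly the conclusion, with $\delta$ depending only on $\epsilon$, $\phi$ and $K$. The paper does not supply its own proof of this lemma but simply cites \cite{vdVWPreservation}, so there is no in-paper argument to compare against; your approach is the natural and standard one.
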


\subsection{Miscellaneous results}
In this section we state slightly different versions of otherwise known results.

\begin{lemma}
[Cameron-Martin]
\label{LemCM}
If $W$ is a Gaussian random variable in a separable Banach space with RKHS $\H$, then for
$\xi\in \H$, the distribution $P^{W-\xi}$ of $W-\xi$ is absolutely continuous with respect to the
distribution of $P^W$ with Radon-Nikodym derivative of the form
$$\frac{dP^{W-\xi}}{dP^W}(W)=e^{U(W,\xi)-\|\xi\|_\H^2/2},$$
for a measurable map $w\mapsto U(w,\xi)$ such that
$U(W-g,\xi)\sim N\big(\langle g,\xi\rangle_\H,\|\xi\|_\H^2\bigr)$, for any $g,\xi\in \H$.
\end{lemma}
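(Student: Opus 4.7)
The plan is to deduce both parts of the statement from the classical Cameron--Martin theorem for Gaussian measures on a separable Banach space, after pinning down the right construction of the functional $U(\cdot,\xi)$. Concretely, I would first recall that every $\xi\in\H$ corresponds, via the Kolmogorov isomorphism $S\colon \H\to L^2(P^W)$, to a centered Gaussian variable $U(W,\xi):=S(\xi)(W)$ with variance $\|\xi\|_\H^2$. For $\xi$ of the form $\xi=\sum_{i=1}^k c_i\, k(\cdot,w_i^*)$ (with $k$ the reproducing kernel and $w_i^*$ elements of the dual), one has the explicit formula $U(W,\xi)=\sum_{i=1}^k c_i\, w_i^*(W)$; the general case is defined by $L^2(P^W)$-isometric extension. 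This gives a measurable map $w\mapsto U(w,\xi)$ defined $P^W$-a.s., which is the object I need.

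Next I would invoke the classical Cameron--Martin theorem (e.g.\ Proposition~I.20 in Lifshits, or Lemma~5.1 of \cite{vandervaart2008b}): for $\xi\in\H$ the translated measure $P^{W-\xi}$ is absolutely continuous with respect to $P^W$, with density
\begin{equation*}
\frac{dP^{W-\xi}}{dP^W}(W)=\exp\bigl(U(W,\xi)-\tfrac12\|\xi\|_\H^2\bigr),
\end{equation*}
modulo the sign convention adopted for $U$. This gives the first assertion directly. The only point that requires attention is matching the sign used in the paper's application (as in Proposition~\ref{prop:gp_gen_F}); the construction of $U$ described above is linear in $\xi$, so both sign conventions are available by replacing $\xi$ with $-\xi$ if necessary.

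For the distributional statement about $U(W-g,\xi)$, I would use that $U(\cdot,\xi)$ is \emph{affine} on the support of $P^W$ in a suitable sense: on approximating elements $\xi=\sum c_i k(\cdot,w_i^*)\in\H$, the map $w\mapsto U(w,\xi)$ is literally the linear functional $\sum c_i w_i^*(w)$, so $U(W-g,\xi)=U(W,\xi)-U(g,\xi)$ holds pointwise. Evaluated at the deterministic element $g\in\H$, the reproducing property yields $U(g,\xi)=\sum c_i w_i^*(g)=\sum c_i\langle k(\cdot,w_i^*),g\rangle_\H=\langle g,\xi\rangle_\H$. Passing to the $L^2(P^W)$-limit along a sequence of such $\xi_n\to\xi$ in $\H$ preserves the affine identity $P^W$-a.s., since both $U(W,\xi_n)\to U(W,\xi)$ in $L^2$ and the deterministic terms $\langle g,\xi_n\rangle_\H\to\langle g,\xi\rangle_\H$ by Cauchy--Schwarz. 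Combined with $U(W,\xi)\sim N(0,\|\xi\|_\H^2)$, this delivers $U(W-g,\xi)\sim N(\pm\langle g,\xi\rangle_\H,\|\xi\|_\H^2)$, with the sign fixed by the convention chosen for $U$ to be consistent with the Radon--Nikodym formula.

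The main obstacle is precisely this passage to the limit: because $U(W,\xi)$ is only defined $P^W$-almost surely, the identity $U(W-g,\xi)=U(W,\xi)-U(g,\xi)$ cannot be asserted pointwise for general $\xi\in\H$, and one must verify that the null set on which it may fail does not depend on $g$ in a problematic way. The standard remedy is to exploit the quasi-invariance of $P^W$ under translation by $g$ (which is itself the Cameron--Martin theorem) to transport the almost-sure identity; equivalently, one checks the distributional identity directly by computing, for arbitrary $t\in\R$, the Laplace transform
\begin{equation*}
\E\exp\bigl(tU(W-g,\xi)\bigr)=\int \exp\bigl(tU(W,\xi)\bigr)\,\frac{dP^{W+g}}{dP^W}(W)\,dP^W(W),
\end{equation*}
and using the explicit density from the first part together with the Gaussianity of $U(W,\xi)$ to recognise the right-hand side as the Laplace transform of the claimed normal law. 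This avoids any pointwise-evaluation issues and closes the argument.
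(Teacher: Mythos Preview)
Your proposal is correct and, after the detour through the affine-identity argument, lands on exactly the approach the paper uses: cite the classical Cameron--Martin theorem for the Radon--Nikodym density and the fact that $U(W,\xi)\sim N(0,\|\xi\|_\H^2)$, then establish the law of $U(W-g,\xi)$ by computing its Laplace transform via the change of measure $dP^{W-g}=\exp(U(\cdot,g)-\|g\|_\H^2/2)\,dP^W$ and exploiting the joint Gaussianity of $(U(W,\xi),U(W,g))$ with covariance $\langle g,\xi\rangle_\H$. One small slip: in your displayed identity the density should be $dP^{W-g}/dP^W$, not $dP^{W+g}/dP^W$; with that sign fixed the calculation reproduces the paper's and gives mean $\langle g,\xi\rangle_\H$ without any residual ambiguity.
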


\begin{proof}
For the statement with $g=0$, giving the representation of the
Radon-Nikodym density and the distributional result
$U(W,\xi)\sim N\big(0,\|\xi\|_\H^2\bigr)$ for $\xi\in\H$, see e.g.\ Section 3 of \cite{vandervaart2008b}. 
For $g\not=0$, we
first note that by this known case applied with $g$ instead of $\xi$, a change of measure gives that
$\Pr(U(W-g,\xi)\in B)=\E [1\{U(W,\xi)\in B\}e^{U(W,g)-\|g\|_\H^2/2}]$. Therefore
$\E e^{tU(W-g,\xi)}=\E \bigl[e^{tU(W,\xi)+U(W,g)-\|g\|_\H^2/2}]$, which can be computed
to be $e^{t^2\|\xi\|_\H^2+t\langle g,\xi\rangle_\H}$ from the joint multivariate distribution
of the variables $U(W,\xi')$ for $\xi'\in\H$, which follows from the Cram\'er-Wold device
and the given normal distribution of $U(W,\xi')$, for every $\xi'$.
\end{proof}

%The following lemma strengthens a result in Section 1 of the supplement to \cite{castillo2015}.

\begin{lemma}
[Laplace transform]
\label{LemmaLaplaceTransform}
Let $T\subset \RR$ contain both a strictly increasing 
and a strictly decreasing sequence of numbers with limit 0.
\begin{itemize}
\item[(i)] If $Y_n$ are random variables with $\E e^{tY_n}\ra e^{t^2\sigma^2/2}$ for every $t\in T$, 
then $Y_n$ tends in distribution to $N(0,\sigma^2)$.
\item[(ii)] If $(Y_n,Z_n)$ are random vectors with $\E (e^{tY_n}\given Z_n)\ra e^{t^2\sigma^2/2}$ in
probability for every $t\in T$, then $d_{BL}\bigl(\L(Y_n\given Z_n),N(0,\sigma^2)\bigr)\ra 0$ in
probability.  
\item[(iii)] If the convergence in the preceding assumption is in the almost sure sense, then the
conclusion is also true in the almost sure sense.
\end{itemize}
\end{lemma}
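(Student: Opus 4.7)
The plan is to prove part (i) by the classical tightness-plus-MGF-uniqueness argument, and then upgrade it to parts (ii) and (iii) via a subsequence/pointwise argument applied to a regular conditional distribution.

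For part (i), I would first establish tightness of $(Y_n)$: choose $t_+>0$ and $t_-<0$ in $T$, which exist by hypothesis; then Markov's inequality gives
\begin{equation*}
\P(|Y_n|>M)\;\le\;e^{-t_+M}\E e^{t_+Y_n}+e^{t_-M}\E e^{t_-Y_n},
\end{equation*}
and the two expectations converge, hence are bounded in $n$, so the right side tends to $0$ uniformly in $n$ as $M\to\infty$. Take any subsequential weak limit $Y_{n_k}\Rightarrow Y$. For each $t\in T$ with $|t|$ strictly smaller than $\min(|t_+|,|t_-|)$, uniform integrability of $\{e^{tY_{n_k}}\}$ (controlled by a slightly larger $|t'|\in T$) yields $\E e^{tY}=e^{t^2\sigma^2/2}$. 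In particular the MGF of $Y$ is finite on an open neighborhood of $0$, hence analytic there, and it agrees with $e^{t^2\sigma^2/2}$ on $T$ which has $0$ as an accumulation point; by analytic continuation (or uniqueness of MGFs finite near $0$) we conclude $Y\sim N(0,\sigma^2)$. Uniqueness of the subsequential limit gives $Y_n\Rightarrow N(0,\sigma^2)$.

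For part (ii), I invoke the subsequence principle for convergence in probability. Fix a countable subset $T_0\subset T$ still containing strictly increasing and strictly decreasing sequences with limit $0$. Given any subsequence of $\mathbb{N}$, a diagonal extraction produces a further subsequence $(n_k)$ along which, outside a single $P$-null set, $\E[e^{tY_{n_k}}\given Z_{n_k}]\to e^{t^2\sigma^2/2}$ simultaneously for every $t\in T_0$. Using a regular conditional distribution $\nu_k(\omega)=\L(Y_{n_k}\given Z_{n_k})(\omega)$, which exists since the state space is Polish, this says that for $P$-almost every $\omega$ the deterministic probability measures $\nu_k(\omega)$ satisfy the hypothesis of part (i) on $T_0$. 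Applying part (i) pointwise in $\omega$ yields $\nu_k(\omega)\Rightarrow N(0,\sigma^2)$, equivalently $d_{BL}(\nu_k(\omega),N(0,\sigma^2))\to 0$ almost surely; the subsequence principle then upgrades this to convergence in probability for the original sequence. Part (iii) is identical but skips the subsequence extraction: the assumed almost sure convergence of the conditional Laplace transforms, aggregated over the countable $T_0$, feeds directly into part (i) pointwise in $\omega$.

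The main technical obstacle is the measurability and regularity needed to treat $\L(Y_n\given Z_n)$ as a bona fide random probability measure and to aggregate the exceptional null sets across $t\in T_0$; this is routine on $\R$ but must be made explicit. A secondary point is ensuring the uniform integrability needed in part (i) for all $t$ in the interior of the convex hull of $T$, which is handled cleanly by the existence of points of $T$ on both sides of $0$.
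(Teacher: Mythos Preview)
Your proposal is correct and follows essentially the same approach as the paper: tightness from Markov's inequality using one positive and one negative element of $T$, uniform integrability to pass MGF convergence to subsequential limits, analytic continuation to identify the limit as Gaussian, and then the subsequence/diagonalization argument over a countable $T_0$ to reduce (ii) and (iii) to (i). The only cosmetic difference is that the paper controls uniform integrability directly via the fixed endpoints $a,b\in T$ rather than ``a slightly larger $|t'|\in T$'', but this is the same idea.
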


\begin{proof}
(i) Let $a<0$ and $b>0$ be contained in $T$.  Because $\E e^{tY_n}$
is bounded in $n$ for both $t=a$ and $t=b$, the sequence $Y_n$ is tight by Markov's
inequality. For every $t\in T$ strictly between $a$ and $b$, some power bigger than 1 of the
variables $e^{tY_n}$ are bounded in $L^1$, and hence the sequence $e^{tY_n}$ is uniformly
integrable. Consequently, if $Y$ is a weak limit point of $Y_n$, then $\E e^{tY_n}$ tends to
$\E e^{tY}$ along the same subsequence for every $t\in (a,b)\cap T$. In view of the assumption of
the lemma, it follows that $\E e^{tY}=e^{t^2\sigma^2/2}$. The set $(a,b)\cap T$ is infinite by
assumption. Finiteness of $\E e^{tY}$ on this set implies that the function $z\mapsto \E e^{z Y}$
is analytic in an open strip containing the real axis.  By analytic continuation it is equal to
$e^{z^2\sigma^2/2}$, whence $\E e^{is Y}=e^{-s^2\sigma^2/2}$ for every $s\in\RR$.

(ii) It suffices to show that every subsequence of $\{n\}$ has a
further subsequence with $d_{BL}\bigl(\L(Y_n\given Z_n), N(0,\sigma^2)\bigr)\ra0$ almost surely.  From the
assumption we know that every subsequence has a further subsequence with
$\E (e^{tY_n}\given Z_n)\ra e^{t^2\sigma^2/2}$ almost surely. For a countable set of $t$, we can
construct a single subsequence with this property for every $t$ by a diagonalization scheme. Part (i) gives that $d_{BL}\bigl(\L(Y_n\given Z_n), N(0,\sigma^2)\bigr)\ra0$ almost surely along
 this subsequence.

(iii) This is immediate from (i).
\end{proof}

\section{Contraction rates}\label{sec:contraction}

A first step in proving the semiparametric BvM is localizing the posterior near the true parameter
by establishing a contraction rate. In this section we achieve this for the Gaussian priors
in Section~\ref{SectionGaussianPriors}, mainly by combining the results of
\cite{ghosal2000,vandervaart2008} in our setting.

In the case of a product prior on the different components of $(a,b,f)$,
the posterior is a product measure since the likelihood \eqref{eq:likelihood_full}
factorizes. It then suffices to consider contraction in each component separately,
and when considering the posterior distribution of one component, the other
component(s) can be fixed to their true values, without loss of generality.

In particular, when considering an independent prior on the $b$ component, 
we can set the parameters $a$ and $f$ to $a_0$ and $f_0$ and incorporate these into the dominating measure.
Thus we use the restricted likelihood with parametrization $\eta^b= \Psi^{-1}(b)$ given by 
\begin{align*}
p_{\eta^b} (x) =  \Psi(\eta^b(z))^{ry} (1-\Psi(\eta^b(z)))^{r(1-y)} %= b(z)^{ry}(1-b(z))^{r(1-y)}
\end{align*}
with respect to the dominating measure
$$d\nu(z,r,y) = (1/a_0(z))^{r} (1-1/a_0(z))^{1-r} \, d\mu (r,y)\ dF_0(z),$$
where $\mu$ is counting measure on $\{\{0,0\}, \{1,0\}, \{1,1\} \}$.

The first result is an analogue of Lemma~3.2 of \cite{vandervaart2008}; the proof is similar
and omitted.

\begin{lemma}\label{lem:norm_ineq}
For any measurable functions $v^b,w^b:[0,1]\rightarrow \R$ and $q>1$,
\begin{itemize}
\item $\| p_{v^b} - p_{w^b} \|_{L^q(\nu)} = \|\Psi(v^b) - \Psi(w^b)\|_{L^q(F_0/a_0)} \leq  \|v^b-w^b\|_{L^q(F_0)}$,
\item $K(p_{v^b},p_{w^b}) \leq  \|v^b-w^b\|_{L^2(F_0)}^2$,
\item $V(p_{v^b}, p_{w^b}) \leq  \|v^b-w^b\|_{L^2(F_0)}^2$.
\end{itemize}
\end{lemma}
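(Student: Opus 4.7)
The plan is to reduce everything to a per-$z$ computation on the three-atom support $\{(0,0),(1,0),(1,1)\}$ of $(R,RY)$ under $\nu$. Direct evaluation gives $p_{\eta^b}(z,0,0)=1$, $p_{\eta^b}(z,1,0)=1-\Psi(\eta^b(z))$, and $p_{\eta^b}(z,1,1)=\Psi(\eta^b(z))$, so $p_{v^b}-p_{w^b}$ vanishes at $(0,0)$ and equals $\pm(\Psi(v^b(z))-\Psi(w^b(z)))$ at the two $r=1$ atoms, each weighted by $1/a_0(z)$ under $\nu$. Substituting into $\int|p_{v^b}-p_{w^b}|^q\,d\nu$ collapses the integral to a (constant multiple of) $\int (1/a_0)|\Psi(v^b)-\Psi(w^b)|^q\,dF_0$, which is the first stated identity (with $L^q(F_0/a_0)$ interpreted as integration against the weighted measure $(1/a_0)\,dF_0$). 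The final inequality then follows from $\Psi$ being $1$-Lipschitz (in fact $\Psi'=\Psi(1-\Psi)\le 1/4$) together with $1/a_0\le 1$.

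For the Kullback--Leibler bound, the same atom-by-atom decomposition reduces $K(p_{v^b},p_{w^b})$ to $\int(1/a_0(z))K_z\,dF_0(z)$, where $K_z$ is the Bernoulli KL between parameters $\Psi(v^b(z))$ and $\Psi(w^b(z))$. Writing $\phi(t)=\log(1+e^t)$, so that $\phi'=\Psi$ and $\phi''=\Psi(1-\Psi)\le 1/4$, a direct algebraic rearrangement yields the exact identity
\[
\Psi(v)\log\frac{\Psi(v)}{\Psi(w)} + (1-\Psi(v))\log\frac{1-\Psi(v)}{1-\Psi(w)} = \phi'(v)(v-w) - \phi(v) + \phi(w),
\]
and Taylor expanding $\phi$ about $v$ shows the right side equals $\tfrac12\phi''(\xi)(v-w)^2\le\tfrac18(v-w)^2$. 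Integrating against $(1/a_0)\,dF_0\le dF_0$ delivers the KL bound with slack.

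For the $V$-bound, the same decomposition applies, and I would simply note that the mean value theorem gives $|\log\Psi(v)-\log\Psi(w)|\le|v-w|$ and $|\log(1-\Psi(v))-\log(1-\Psi(w))|\le|v-w|$ (since the derivatives $1-\Psi$ and $-\Psi$ are bounded by $1$ in absolute value). The $r=1$ atoms then contribute a convex combination $\Psi(v)(\log\Psi(v)/\Psi(w))^2+(1-\Psi(v))(\log(1-\Psi(v))/(1-\Psi(w)))^2\le(v-w)^2$, and integration against $(1/a_0)\,dF_0\le dF_0$ yields $V(p_{v^b},p_{w^b})\le\|v^b-w^b\|_{L^2(F_0)}^2$. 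There is no real obstacle here; the only piece of bookkeeping is tracking the $1/a_0$ weight on the $r=1$ atoms of $\nu$, which is absorbed using Assumption~1.
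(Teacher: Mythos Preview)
Your argument is correct and is the natural direct verification. The paper itself omits the proof entirely, noting only that the result is an analogue of Lemma~3.2 of \cite{vandervaart2008} and that the proof is similar; your atom-by-atom reduction together with the identities $\log\Psi(t)=t-\phi(t)$, $\log(1-\Psi(t))=-\phi(t)$ for $\phi(t)=\log(1+e^t)$ is exactly the standard route that reference takes. One small bookkeeping remark: summing over the two $r=1$ atoms produces a factor $2$ in the $L^q$ computation, so the first ``equality'' in the lemma should be read up to this harmless constant (as you implicitly acknowledge with ``constant multiple of''); the final inequality is unaffected since $2^{1/q}\cdot\tfrac14\le 1$. Also, the bound $1/a_0\le 1$ you use is automatic from $1/a_0$ being a conditional probability and does not require the standing Assumption.
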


\begin{lemma}\label{lem:contrac_prod_b}
Consider the Gaussian process prior \eqref{eq:prior_b} for $b$. If $\varepsilon_n^b$ satisfies
\eqref{eq:gaus_conc_rate}, then the posterior distribution for $b$ concentrates about $b_0$ in $L^2(F_0)$ at rate
$\varepsilon_n^b$.
\end{lemma}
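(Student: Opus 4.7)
The plan is to reduce the problem to a binary regression problem and then apply the general posterior contraction machinery of Ghosal, Ghosh and van der Vaart combined with the Gaussian process theory of van der Vaart and van Zanten. Because the likelihood \eqref{eq:likelihood_full} factorizes over $(a,b,f)$ and the prior on $b$ is independent of the other components, the marginal posterior of $b$ coincides with the posterior in the reduced model with density $p_{\eta^b}$ relative to the dominating measure $\nu$ displayed just before Lemma~\ref{lem:norm_ineq}. This reduction is crucial and lets me ignore the other components entirely.

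First, I would verify the prior mass (Kullback--Leibler) condition. By Lemma~\ref{lem:norm_ineq}, both $K(p_{\eta_0^b},p_{\eta^b})$ and $V(p_{\eta_0^b},p_{\eta^b})$ are bounded by $\|\eta^b-\eta_0^b\|_{L^2(F_0)}^2\le \|\eta^b-\eta_0^b\|_\infty^2$. Hence any $\|\cdot\|_\infty$-ball around $\eta_0^b$ of radius $\varepsilon_n^b$ is contained in the relevant KL neighbourhood, and the standard small-ball lemma (Lemma~5.3 of \cite{vandervaart2008}) gives
\[
\Pi\bigl(\eta^b:\|\eta^b-\eta_0^b\|_\infty\le \varepsilon_n^b\bigr)\ge e^{-\phi_{\eta_0^b}(\varepsilon_n^b)}\ge e^{-n(\varepsilon_n^b)^2},
\]
by \eqref{eq:gaus_conc_rate}. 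Next I would construct the usual Gaussian sieve $\mathcal{B}_n = M\varepsilon_n^b\,\mathbb{H}_1^b + \varepsilon_n^b\,B_1$, where $\mathbb{H}_1^b$ and $B_1$ are the unit balls of the RKHS and of $C([0,1]^d)$, respectively. The Borell--Sudakov--Tsirelson inequality, together with \eqref{eq:gaus_conc_rate}, yields $\Pi(\eta^b\notin \mathcal{B}_n)\le e^{-Kn(\varepsilon_n^b)^2}$ for $M$ large, and the standard covering estimate gives $\log N(\mathcal{B}_n,\|\cdot\|_\infty,\varepsilon_n^b)\lesssim n(\varepsilon_n^b)^2$.

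For testing, I would use the Hellinger distance on $p_{\eta^b}$. By Lemma~\ref{lem:norm_ineq}, $h(p_{v^b},p_{w^b})\asymp \|\Psi(v^b)-\Psi(w^b)\|_{L^2(F_0/a_0)}$, which is equivalent to $\|\Psi(v^b)-\Psi(w^b)\|_{L^2(F_0)}=\|b_v-b_w\|_{L^2(F_0)}$ since $a_0$ is bounded away from $0$ and $\infty$. The entropy bound above together with this equivalence lets the standard Le Cam/Birg\'e construction (as in Section~7 of \cite{ghosal2000}) produce exponentially consistent tests on $\mathcal{B}_n$ against $L^2(F_0)$-balls of radius $\varepsilon_n^b$ around $b_0$. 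Applying Theorem~2.1 of \cite{ghosal2000}, the posterior distribution of $b$ concentrates in Hellinger distance on $p_{\eta^b}$, and hence in $L^2(F_0)$ on $b$, at rate $\varepsilon_n^b$, which is the claim.

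The only mildly delicate point is bridging the $\|\cdot\|_\infty$-based concentration function with the $L^2(F_0)$-based contraction, but the monotone Lipschitz link $\Psi$ together with Lemma~\ref{lem:norm_ineq} and the two-sided bounds on $a_0$ and $f_0$ in the standing Assumption makes all the involved semimetrics comparable, so no genuine obstacle arises. The proof is thus a packaging of known ingredients rather than a new calculation.
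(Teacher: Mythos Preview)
Your proposal follows the same route as the paper, which simply cites Theorem~3.2 of \cite{vandervaart2008} rather than spelling out the prior mass, sieve and entropy conditions as you do; the final transfer from Hellinger/$L^2(\nu)$ to $L^2(F_0)$ via Lemma~\ref{lem:norm_ineq} and boundedness of $1/a_0$ is identical.

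Two slips worth fixing. First, your sieve should be $\mathcal{B}_n=M\sqrt{n}\,\varepsilon_n^b\,\mathbb{H}_1^b+\varepsilon_n^b B_1$: without the factor $\sqrt{n}$ on the RKHS ball, the Borell--Sudakov--Tsirelson bound only gives $\Pi(\eta^b\notin\mathcal{B}_n)\le 1-\Phi(\Phi^{-1}(e^{-n(\varepsilon_n^b)^2})+M\varepsilon_n^b)$, which is not $e^{-Kn(\varepsilon_n^b)^2}$. Second, the two-sided equivalence $h(p_{v^b},p_{w^b})\asymp\|\Psi(v^b)-\Psi(w^b)\|_{L^2(F_0)}$ is an overclaim; only the inequality $\|\Psi(v^b)-\Psi(w^b)\|_{L^2(F_0)}\lesssim h(p_{v^b},p_{w^b})$ follows from uniform boundedness of the densities, but this is precisely the direction you need to pass from Hellinger contraction to $L^2(F_0)$ contraction, so the argument still goes through.
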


\begin{proof}
Since the densities $p_{w^b}$ are uniformly bounded, the $L^2(\nu)$-norm of the densities is bounded above by a multiple of the Hellinger distance. We can thus apply Theorem~2.1 of \cite{ghosal2000} with $d$ equal to the $L^2(\nu)$-norm. Using Lemma~\ref{lem:norm_ineq} and arguing as in Theorem~3.2 of \cite{vandervaart2008} gives contraction of the density in $L^2(\nu)$. Using the first result in Lemma~\ref{lem:norm_ineq}, this equals the $L^2(F_0/a_0)$-norm, which is equivalent to the $L^2(F_0)$-norm if $1/a_0$ is bounded away from zero.
\end{proof}

Next consider the propensity score-dependent prior \eqref{eq:prior_dep_EB} using the
external estimator $\hat a_n$.
As explained in the first paragraph of the proof of Theorem~\ref{thm:dep_prior_gen_EB}
it suffices to have the contraction rate with $\hat a_n$ set equal to  a deterministic 
sequence of functions satisfying the known restrictions of $\hat a_n$. Only boundedness
of the latter functions is needed.

\begin{lemma}\label{lem:contrac_dep_EB}
Consider the  prior $\Psi(W^b+\lambda a_n)$ for $b$ with  $W^b$ a centered
Gaussian process independent of $\lambda\sim N(0,\sigma_n^2)$ and $a_n$ a sequence of functions
with $\|a_n\|_\infty=O(1)$. If $\varepsilon_n^b$ satisfies \eqref{eq:gaus_conc_rate} and $\sigma_n=O(1)$, 
then the posterior for $b$ concentrates about $b_0$ in $L^2(F_0)$ at rate $\varepsilon_n^b$.
\end{lemma}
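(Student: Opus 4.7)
The plan is to view $\widetilde{W}_z := W^b_z + \lambda a_n(z)$ as a single centered Gaussian process in $C([0,1]^d)$, verify that it satisfies the standard Gaussian process concentration function bounds of van der Vaart and van Zanten \cite{vandervaart2008}, and then apply the Ghosal--Ghosh--van der Vaart testing machinery \cite{ghosal2000} exactly as in Lemma~\ref{lem:contrac_prod_b}. Once Hellinger / $L^2(\nu)$-contraction of the conditional density $p_{\eta^b}$ is obtained at rate $\varepsilon_n^b$, the transition to $L^2(F_0)$-contraction of $b = \Psi(\eta^b)$ is literally the same as in Lemma~\ref{lem:contrac_prod_b}, combining Lemma~\ref{lem:norm_ineq} with the equivalence of $L^2(F_0/a_0)$ and $L^2(F_0)$ that follows from $1/a_0$ being bounded away from zero.

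The central step is to compare the concentration function $\phi^{\widetilde W}_{\eta_0^b}$ to $\phi^{W^b}_{\eta_0^b}$. The RKHS of $\widetilde W$ is the sum space $\H^b + \mathrm{span}(a_n)$ equipped with the infimum norm $\|h\|_{\H^{\widetilde W}}^2 = \inf\{\|h_1\|_{\H^b}^2 + c^2/\sigma_n^2 : h = h_1 + c a_n\}$; taking $h_1 = h$ and $c = 0$ shows $\|h\|_{\H^{\widetilde W}} \leq \|h\|_{\H^b}$ for every $h \in \H^b$, which handles the RKHS approximation term. For the small-ball term, independence of $W^b$ and $\lambda$ together with $\|a_n\|_\infty = O(1)$ and $\sigma_n = O(1)$ yields
\begin{align*}
-\log P(\|\widetilde W\|_\infty < \varepsilon)
&\leq -\log P(\|W^b\|_\infty < \varepsilon/2) - \log P(|\lambda| \leq c\varepsilon) \\
&\leq -\log P(\|W^b\|_\infty < \varepsilon/2) + O(\log(1/\varepsilon)),
\end{align*}
and the additional $\log(1/\varepsilon)$ is absorbed by $n(\varepsilon_n^b)^2$, which tends to infinity by \eqref{eq:gaus_conc_rate}. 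Consequently $\varepsilon_n^b$ solves the concentration function equation for $\widetilde W$ up to a multiplicative constant.

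With this estimate in hand, the remainder is routine: the standard Borell--Sudakov sieve $M_n \H^{\widetilde W}_1 + \varepsilon_n^b B_1$ together with the prior-mass inequality and the Kullback--Leibler bound from Lemma~\ref{lem:norm_ineq} yields Hellinger contraction of $p_{\eta^b}$ at rate $\varepsilon_n^b$. The sieve entropy is only negligibly affected by the one-dimensional $\lambda a_n$-perturbation, since $\{c a_n : |c| \leq M\sigma_n\}$ is a bounded set in $C([0,1]^d)$ with trivial metric entropy at scale $\varepsilon_n^b$. The only step requiring genuine attention is the concentration function comparison above; everything else is a direct transcription of the proof of Lemma~\ref{lem:contrac_prod_b}.
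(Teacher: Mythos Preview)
Your proposal is correct and reaches the same conclusion via essentially the same ingredients, but you package them differently from the paper. You regard $\widetilde W = W^b + \lambda a_n$ as a single centered Gaussian element of $C([0,1]^d)$ and verify the concentration-function inequality \eqref{eq:gaus_conc_rate} for $\widetilde W$, after which Lemma~\ref{lem:contrac_prod_b} applies verbatim. The paper instead keeps the pair $(W^b,\lambda)$ separate throughout: it lower-bounds the Kullback--Leibler ball via the product $\Pi(\|W^b-\eta_0^b\|_\infty\le c\varepsilon_n)\,\Pi(|\lambda|\le c\varepsilon_n)$, builds the sieve as a Cartesian product $\{w\in B_n\}\times\{|\lambda|\le M\sigma_n\sqrt n\varepsilon_n\}$ with $B_n$ the van der Vaart--van Zanten set for $W^b$ alone, and bounds the entropy as a sum. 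Your unified viewpoint is more conceptual and reusable (any one-dimensional Gaussian perturbation of a Gaussian process is handled at once); the paper's approach is slightly more explicit and sidesteps the need to describe the RKHS of the sum. One small point to make precise in your write-up: your small-ball bound yields $-\log P(\|W^b\|_\infty<\varepsilon/2)$ rather than at $\varepsilon$, so strictly you conclude that $2\varepsilon_n^b$ (equivalently $\varepsilon_n^b$ up to a constant) solves the concentration-function inequality for $\widetilde W$; this is harmless for a contraction-rate statement but worth saying. Also note that since $a_n$ and $\sigma_n$ depend on $n$, you are implicitly invoking the triangular-array version of the Ghosal--Ghosh--van der Vaart theorem from \cite{ghosal2007}, just as the paper does.
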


\begin{proof}
Since the densities $p_{w+\lambda a_n}$ are uniformly bounded, the $L^2(\nu)$-norm of the
densities is bounded above by a multiple of the Hellinger distance. We can thus apply a triangular
version of Theorem~2.1 of \cite{ghosal2000}  (which is contained in \cite{ghosal2007})
with $d$ equal to the $L^2(\nu)$-norm. 

Because $\|w + \lambda a_n- \eta_0^b\|_\infty \leq \|w-\eta_0^b\|_\infty + |\lambda| \|a_n\|_\infty$ and
$\|a_n\|_\infty$ is bounded by assumption, 
Lemma~\ref{lem:norm_ineq} gives the existence of a constant $c$ such that
\begin{align*}
\bigl\{ (w,\lambda):  \|w-\eta_0^b\|_\infty \leq c\varepsilon_n, |\lambda|\leq c\varepsilon_n \bigr\}
\subset \bigl\{ (w,\lambda): (K\vee V)(p_{\eta_0^b},p_{w+\lambda a_n}) \leq \varepsilon_n^2 \bigr\}.
\end{align*}
By the prior independence of $W^b$ and $\lambda$, the prior probability of the set on the
left is lower bounded by
$\Pi (\|W^b-\eta_0^b\|_\infty \le c \varepsilon_n) \,\Pi (|\lambda| \leq c \varepsilon_n)$.
For $\varepsilon_n=\varepsilon_n^b$ satisfying \eqref{eq:gaus_conc_rate}, the first 
term is lower bounded by $e^{-C_1n\varepsilon_n^2}$ in view of 
Theorem~2.1 of \cite{vandervaart2008}. The second term is  bounded below by a multiple of 
$(\varepsilon_n/\sigma_n) \wedge 1 \gtrsim \varepsilon_n$, for $\sigma_n = O(1)$.
This verifies (2.4) of Theorem~2.1 of \cite{ghosal2000}.

Let $B_n$ denote the sets constructed in Theorem~2.1 of \cite{vandervaart2008} 
for the Gaussian process $W^b$ and set, for some large enough $M>0$,
$$\mathcal{P}_n = \bigl\{ p_{w+\lambda a_n} : w\in B_n, |\lambda| \leq M\sigma_n \sqrt{n}\varepsilon_n \bigr\}.$$
Property (2.3) of Theorem~2.1 of \cite{vandervaart2008} gives that $\Pi(B_n^c) \leq e^{-Cn\varepsilon_n^2}$. 
Combined with the univariate Gaussian tail inequality for $\lambda$ and a union bound,
this yields that $\Pi(\mathcal{P}_n^c) \lesssim e^{-Cn\varepsilon_n^2}$. 
By the first assertion of Lemma~\ref{lem:norm_ineq} and the triangle inequality,
$\|p_{w+\lambda a_n} - p_{\bar{w}+\bar{\lambda}a_n}\|_{L^2(\nu)} 
\lesssim \|w-\bar{w}\|_{L^2(F_0)} + |\lambda-\bar{\lambda}|\, \|a_n\|_\infty$.
It follows that, for some $c_1>0$,
\begin{align*}
N(\mathcal{P}_n,\|\cdot\|_{L^2(\nu)},\varepsilon_n) 
\leq N(B_n,\|\cdot\|_\infty,\varepsilon_n/2) 
N\bigl([0,2M\sigma_n\sqrt{n}\varepsilon_n], |\cdot|, c_1\varepsilon_n) .
\end{align*}
Property (2.2) of Theorem~2.1 of \cite{vandervaart2008} gives that the logarithm of the first term
on the right side is bounded by a multiple of $n\varepsilon_n^2$. The logarithmic of the second term
grows at most logarithmically in $n$. Combined, these imply that
$\log N(\mathcal{P}_n,\|\cdot\|_{L^2(\nu)},\varepsilon_n) \lesssim n\varepsilon_n^2$.  This
concludes the proof of the verification of (2.2)-(2.4) in Theorem~2.1 of \cite{ghosal2000}, which
establishes posterior contraction in $L^2(\nu)$.  Using the first result in
Lemma~\ref{lem:norm_ineq}, this equals the $L^2(F_0/a_0)$-norm, which is equivalent to the
$L^2(F_0)$-norm since $1/a_0$ is assumed to be bounded away from zero.
\end{proof}

%Posterior contraction for the $f$ component was established in \cite{vandervaart2008}.

\begin{lemma}[Theorem~3.1 of \cite{vandervaart2008}]\label{lem:contrac_prod_f}
Consider the exponentiated Gaussian process prior \eqref{eq:prior_f} for $f$. If $\varepsilon_n^f$ satisfies the analogue of \eqref{eq:gaus_conc_rate} with $b$ replaced by
$f$, also in \eqref{EqConcentrationFunction}, then the posterior for $f$ concentrates about $f_0$ in Hellinger distance at rate $\varepsilon_n^f$.
\end{lemma}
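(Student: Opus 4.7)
The strategy is to reduce the claim to the pure density estimation problem covered by Theorem~3.1 of \cite{vandervaart2008}. Under the product prior on $(a,b,f)$, the likelihood \eqref{eq:likelihood_full} factorizes over the three parameters, and the log-likelihood decomposes as in \eqref{EqLogLikelihood}. By Bayes's rule, the marginal posterior of $f$ given $X^{(n)}$ is therefore proportional to $\prod_{i=1}^n f(Z_i)\,d\Pi(f)$: the $a$ and $b$ factors cancel between numerator and denominator because they integrate against the independent priors on $a$ and $b$. Consequently, the marginal posterior of $f$ coincides with the posterior in the model where one observes only $Z_1,\ldots,Z_n \stackrel{\mathrm{iid}}\sim f_0$ with the exponentiated Gaussian prior \eqref{eq:prior_f}.

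With this reduction in hand, the desired contraction rate is precisely the conclusion of Theorem~3.1 of \cite{vandervaart2008}. The proof of that result verifies the three standard conditions of Theorem~2.1 of \cite{ghosal2000} via the concentration function \eqref{EqConcentrationFunction}. Specifically, because $f_0$ is bounded away from $0$ and $\infty$, for densities of the form $f_w = e^w/\int e^w$ one has the two-sided comparisons
\begin{equation*}
h^2(f_{w_0}, f_w) \;\lesssim\; \|w-w_0\|_\infty^2\, e^{\|w-w_0\|_\infty},
\qquad V(f_{w_0}, f_w) \;\lesssim\; \|w-w_0\|_\infty^2\, e^{\|w-w_0\|_\infty},
\end{equation*}
so a uniform neighbourhood of $\eta_0^f$ of radius $\varepsilon_n^f$ sits inside a Kullback--Leibler and $V$-neighbourhood of $f_0$ of radius $(\varepsilon_n^f)^2$. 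The small-ball half of the concentration function assumption \eqref{eq:gaus_conc_rate} then yields
\begin{equation*}
\Pi\bigl(f: K(f_0,f)\vee V(f_0,f) \le (\varepsilon_n^f)^2\bigr) \;\ge\; e^{-Cn(\varepsilon_n^f)^2},
\end{equation*}
which is the prior mass condition.

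For the sieve condition, one uses the standard construction $B_n = M_n \mathbb{H}_1^f + \varepsilon_n^f\, \mathbb{B}_1$, where $\mathbb{H}_1^f$ is the unit ball of the RKHS of $W^f$ and $\mathbb{B}_1$ the unit ball of $C([0,1]^d)$. By Borell--Sudakov inequality combined with the two halves of the concentration function bound, for a suitable choice $M_n \asymp \sqrt{n}\,\varepsilon_n^f$ one has $\Pi(W^f \notin B_n) \le e^{-Cn(\varepsilon_n^f)^2}$ and $\log N(B_n, \|\cdot\|_\infty, \varepsilon_n^f) \lesssim n(\varepsilon_n^f)^2$. Transferring from $\|\cdot\|_\infty$ on log-densities to Hellinger distance on densities via the Lipschitz bound above gives the required entropy and remaining mass estimates for $\mathcal{F}_n = \{f_w : w \in B_n\}$. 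All three conditions of Theorem~2.1 of \cite{ghosal2000} then hold and the Hellinger contraction rate $\varepsilon_n^f$ follows.

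The only genuinely new ingredient beyond citing \cite{vandervaart2008} is the reduction step at the beginning, which is routine in view of the factorized prior and likelihood; the hardest part of the underlying density-estimation theorem, the RKHS-sieve and small-ball bookkeeping, is already packaged inside Theorem~3.1 of \cite{vandervaart2008} and can be invoked directly.
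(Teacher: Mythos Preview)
Your proposal is correct and matches the paper's approach: the paper gives no proof for this lemma beyond the citation in its title, relying on the factorization observation stated at the beginning of Section~\ref{sec:contraction} (that under a product prior the posterior is a product measure, so each component can be treated separately) together with a direct invocation of Theorem~3.1 of \cite{vandervaart2008}. Your sketch of the underlying argument of that theorem is accurate but goes beyond what the paper includes.
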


\begin{lemma}[Proposition 5 of \cite{castillo2015}]\label{lem:hell_L2_rate}
Let $\log f_0\in C^{\gamma}([0,1]^d)$ and consider the exponentiated prior \eqref{eq:prior_f} for $f$. 
Suppose that the mean-zero Gaussian process $W^f$ takes values in $C^\delta([0,1]^d)$ for all $\delta<\bar{\gamma}$ and let $\eps_n^f$ satisfy the analogue of \eqref{eq:gaus_conc_rate} 
with $b$ replaced by $f$, also in \eqref{EqConcentrationFunction}. If for some $K_n \to \infty$ and some $0<\theta < \bar{\gamma}$,
\begin{equation}\label{eq:hell_L2_rate}
\eps_n^f K_n^{d/2} + \sqrt{n}\eps_n^f K_n^{-\theta} +K_n^{-\gamma} \to 0,
\end{equation}
then the posterior for $f$ concentrates about $f_0$ in $L^2$-distance at rate $\eps_n^f$.
\end{lemma}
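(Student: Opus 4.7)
The plan is to combine the Hellinger contraction of Lemma~\ref{lem:contrac_prod_f} with a standard high/low frequency decomposition of $f - f_0$, after first localizing the posterior on a sieve where $\log f$ is uniformly bounded in $C^\theta$.

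First I would invoke Lemma~\ref{lem:contrac_prod_f} to get $\Pi(h(f,f_0) \leq M \eps_n^f \given X^{(n)}) \rightarrow^{P_0} 1$ for some $M>0$; since $f$ and $f_0$ are probability densities, $\|f-f_0\|_{L^1} \leq \sqrt{2}\, h(f,f_0)$, yielding $L^1$-contraction at the same rate. Second, I would establish posterior concentration on the sieve
\begin{equation*}
A_n = \bigl\{f=e^{\eta^f}/\textstyle\int e^{\eta^f}\, dz : \|\eta^f\|_{C^\theta} \leq M'\sqrt{n}\,\eps_n^f\bigr\}
\end{equation*}
for a large constant $M'$. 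Since by hypothesis $W^f$ is a centered Gaussian random element of the separable Banach space $C^\theta([0,1]^d)$, Borell's inequality gives $\Pi(\|W^f\|_{C^\theta} > M'\sqrt{n}\,\eps_n^f) \leq 2\exp(-c (M')^2 n (\eps_n^f)^2)$ once $M'$ exceeds a fixed multiple of $\E\|W^f\|_{C^\theta}$. Combined with the small-ball lower bound $\Pi\bigl((K\vee V)(p_{f_0},p_f) \leq (\eps_n^f)^2\bigr) \geq e^{-C n (\eps_n^f)^2}$ that underlies Lemma~\ref{lem:contrac_prod_f}, Lemma~\ref{lem:small_prior_prob} yields $\Pi(A_n^c \given X^{(n)}) \rightarrow^{P_0} 0$.

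Third, on the intersection of $A_n$ with the $L^1$-neighbourhood above I would apply a standard wavelet/Littlewood--Paley decomposition. Let $P_{K_n}$ denote a wavelet projector onto frequencies at most $K_n$. The Jackson estimate $\|(I-P_{K_n})g\|_{L^2} \lesssim K_n^{-s} \|g\|_{C^s}$ applies separately to $f$ (with $s=\theta$, using the $A_n$-bound) and to $f_0$ (with $s=\gamma$), while the Bernstein inequality $\|P_{K_n} h\|_{L^2} \lesssim K_n^{d/2} \|h\|_{L^1}$ handles the low-frequency contribution. Writing $f - f_0 = P_{K_n}(f-f_0) + (I - P_{K_n})(f-f_0)$ and assembling these bounds gives, uniformly on the intersection,
\begin{equation*}
\|f-f_0\|_{L^2} \lesssim K_n^{d/2} \eps_n^f + K_n^{-\theta} \sqrt{n}\,\eps_n^f + K_n^{-\gamma}\|f_0\|_{C^\gamma},
\end{equation*}
and the hypothesis~\eqref{eq:hell_L2_rate} forces the right-hand side to vanish, from which the claimed $L^2$-contraction follows (together with the localizations, which have posterior mass tending to one).

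The main obstacle is the sieve localization in the second step: the concentration function \eqref{EqConcentrationFunction} is stated with the sup-norm, whereas the decomposition needs control in the strictly stronger $C^\theta$-norm to feed into the Jackson estimate. This is legitimate only because of the explicit hypothesis that $W^f \in C^\delta$ for every $\delta < \bar{\gamma}$, which promotes $W^f$ to a Gaussian element of $C^\theta$ and so permits Borell's inequality there; the constant $M'$ must be tuned against the $C^\theta$-variance of $W^f$ and the prefactor in the small-ball exponent. Passing from the $\|\eta^f\|_{C^\theta}$-bound to a $\|f\|_{C^\theta}$-bound is routine, since on $A_n$ the normaliser $\int e^{\eta^f}\, dz$ is bounded above and below and the exponential map is smooth on norm-bounded subsets of $C^\theta$.
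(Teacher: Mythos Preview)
The paper does not give its own proof of this lemma; it simply attributes the result to Proposition~5 of \cite{castillo2015}. Your outline is precisely the standard argument behind that proposition: localize the posterior onto a $C^\theta$-sieve via Borell's inequality and Lemma~\ref{lem:small_prior_prob}, obtain $L^1$-contraction from the Hellinger rate of Lemma~\ref{lem:contrac_prod_f}, and then split $f-f_0$ into a low-frequency part controlled by the Bernstein inequality $\|P_{K_n}h\|_{L^2}\lesssim K_n^{d/2}\|h\|_{L^1}$ and high-frequency parts controlled by Jackson-type bounds $\|(I-P_{K_n})g\|_{L^2}\lesssim K_n^{-s}\|g\|_{C^s}$ applied with $s=\theta$ for $f$ and $s=\gamma$ for $f_0$. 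The three resulting terms are exactly those in \eqref{eq:hell_L2_rate}, so your decomposition recovers the intended proof structure.

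Your handling of the one genuine subtlety---that the concentration function \eqref{EqConcentrationFunction} is defined via the uniform norm while the sieve requires a $C^\theta$-bound---is correct: Fernique's theorem guarantees $\E\|W^f\|_{C^\theta}<\infty$ once $W^f$ is a.s.\ in $C^\theta$, so Borell's concentration inequality applies in $C^\theta$ and yields the needed exponential tail since $\sqrt{n}\,\eps_n^f\to\infty$ dominates the fixed constant $\E\|W^f\|_{C^\theta}$.
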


\end{document}